\newtheorem{theorem}{Theorem}[section]
\newtheorem{corollary}[theorem]{Corollary}
\newtheorem{lemma}[theorem]{Lemma}
\newtheorem{proposition}[theorem]{Proposition}
\newtheorem{example}[theorem]{Example}
\newtheorem{remark}[theorem]{Remark}
\def\be{\beta'_t}
\def\ve{\varepsilon}
\def\compl{\mathbb{C}}
\def\nat{\mathbb{N}}
\def\real{\mathbb{R}}
\def\q{\quad}
\def\mes{\mathcal{M}}
\def\conv{\boxplus}
\def\unve{\underline\varepsilon}
\def\mutilda{\omega \boxplus \mu_s^{(\underline\varepsilon_s)}}
\def\muves {\tilde\mu_{m+s}}
\def\muvesthree {\tilde\mu_{m + s}}
\def\set {E_{m,s}^n}
\def \uns {\underline \sigma}
\def \est {\delta^{-6}}
\def\bee{\begin{eqnarray}}
\def\eee{\end{eqnarray}}
\def\be*{\begin{eqnarray*}}
\def\ee*{\end{eqnarray*}}
\numberwithin{equation}{section}
\begin{document}

\title {ASYMPTOTIC EXPANSIONS IN FREE LIMIT THEOREMS}

\author{F. G\"OTZE$^1$ }
\thanks{$^1$Reserach supported by CRC 701.}
\author{
A. RESHETENKO$^2$}
\thanks{$^2$Research supported by IRTG 1132.} 
%\date{}
\keywords{Cauchy transform, free convolution, Central Limit Theorem, asymptotic expansion.}

\begin{abstract}
We study asymptotic expansions in  free probability. 
In a class of classical limit theorems  Edgeworth expansion
can be obtained via 
a general approach using sequences of ``influence'' functions of individual random elements described by
vectors of real parameters $(\ve_1,\dots, \ve_n)$, that is by
a sequence of functions 
$h_n(\varepsilon_1,\dots, \varepsilon_n;t)$, $|\varepsilon_j| \leq \frac 1 {\sqrt n}$, $j=1,\dots,n$, 
$t\in A \subset \real$ (or $\compl$) 
which are smooth, symmetric, compatible and have vanishing first derivatives at zero.
In this work we expand this approach to free probability.
As  a sequence of functions $h_n(\varepsilon_1,\dots, \varepsilon_n;t)$ we consider 
a sequence of the Cauchy transforms of the sum
$\sum_{j=1}^n \varepsilon_j X_j $, where $(X_j)_{j=1}^n$ are
free identically distributed random  variables with nine moments. We derive   Edgeworth type expansions for distributions and densities (under the additional assumption that $\mbox{supp} (X_1) \subset [-\sqrt [3]n, \sqrt [3]n]$) of the  sum 
$\frac 1 {\sqrt n} \sum_{j=1}^n X_j$
within the interval $(-2,2)$. 
\end{abstract}

\maketitle

%\tableofcontents{}

\section {Introduction}

Free probability theory was initiated by  Voiculescu in 1980's as a tool for understanding free group factors.
The main concept in this theory  is  the notion of freeness, which is a counterpart of the classical independence for non-commutative random variables.

The distribution of the sum of two free random variables is uniquely determined by the distributions of the summands and called the free  convolution of the initial distributions.
While classical convolutions are studied via Fourier transforms, free convolutions can be studied via Cauchy transforms.
Numerous results concerning the distributional behaviour of the sum of several free random variables were proved in the recent years: 
Free limit theorems \cite {M92, V86}, 
the law of large numbers \cite {BP96}, 
the Berry-Esseen inequality \cite {CHG08, Kar07},
the Edgeworth expansion in the free central limit theorem \cite {ChG11v2} etc.
These results parallel  the classical ones. 
On the other hand some results in free probability theory have no counterparts in classical probability theory.
For example, the so called superconvergence. 
This type of convergence appears in free limit theorems and is stronger then usual convergence.

The Edgeworth type expansion in free probability theory was first obtained by Chistyakov and G\"otze in \cite{ChG11v2}.
The idea is based on the approximation of the distribution of $Y_n: = \frac 1 {\sqrt n} \sum_{j=1}^n X_j$, where $X_j$, $j=1,\dots, n$ are
 free identically distributed random variables.  by the shifted free Meixner distribution,
The expansion for the distribution and density of $Y_n$ is given at the point $x+ m_3/\sqrt n$, where $m_3$ is the third moment of $X_1$. 

In this paper we develop a technique which was described in \cite{G85}. 
This approach (see Section 4) was introduced as a tool to derive asymptotic expansions and estimates for the reminder term  in a class of classical functional limit theorems in abstract spaces. 
It is based on  Taylor expansions only  and hence can be applied in free probability  without additional modifications.
We use this method and derive the Edgeworth expansions for distributions and densities of normalized sums
$Y_n$.
%The results we get extended those in \cite {ChG11v2}.

The paper is organized as follows. In Section 2 we formulate and discuss the main results. 
Preliminaries are introduced in Section 3. In Section 4 we describe the general scheme. 
In Section 5 we apply this general scheme to free probability.
Section 6 is devoted to the proofs of results. 
In the Appendix we provide formulations of some results of the  literature, in particular a more detailed and revised version of the expansion scheme outlined in \cite{G85} for the readers convenience.
The results of this paper are part of the Ph.D. thesis of the second author in 2014 at the University of Bielefeld.

%--------------------------------------------------

\section{Results}

Denote by $\mes$ the family of all Borel probability measures defined on the real line $\real$.

Let $X_1, X_2, \dots$ be free self-adjoint identically distributed random variables with distribution 
$\mu \in \mes$.
Denote by $m_k$ and $\beta_k$ the moments and absolute moments of $\mu$.
Throughout the text we
assume that   $\mu$ has zero mean and unit variance.
Let $\mu_n$ be the distribution of the normalized sum 
$\frac 1 {\sqrt n} \sum _{j=1}^n X_j$.
In free probability a sequence of measures $\mu_n$
 converges to the semicircle law $\omega$ as $n$ tends to $\infty$. Moreover, $\mu_n$  is absolutely continuous with respect to the Lebesgue measure for sufficiently large $n$ \cite{W10}.

We denote by $p_{\mu_n}$ the density of $\mu_n$.
Define the Cauchy transform of a measure $\mu$:
$$G_\mu(z) = \int_{\real} \frac {\mu(dx)}{z - x},\ \ z\in \compl^+,$$
where $\compl^+$ denotes the upper half plane.
%Denote by $\beta_k$, $k=1,2,\dots$ the absolute moments of $\mu$.

%\noindent------------------------------------

In \cite{ChG11v2} Chistyakov and G\"otze  obtained  
a formal power expansion for the Cauchy transform of $\mu_n$
and  the Edgeworth type expansions for $\mu_n$ and $p_{\mu_n}$.
Below we review these results.
Assume that $\mu$  has compact support. 
Denote by $U_n(x)$ the Chebyshev polynomial of the second kind of degree $n$, which is given by the recurrence relation:
\begin{eqnarray}
\label{Cheb}
U_0(x) = 1,\ \ 
U_1(x)  = 2x,\ \ 
U_{n+1}(x)  = 2x U_n(x) - U_{n-1}(x).
\end{eqnarray}
The formal expansion has the form
\begin{eqnarray}
\label{form_c}
G_{\mu_n}(z) = G_{\omega}(z) + \sum_{k = 1}^\infty \frac {B_k(G_{\omega}(z))}{n^{k/2}},
\end{eqnarray}
where 
\begin{eqnarray}
\label{coef}
B_k(z) = \sum_{(p,m)} c_{p,m}\frac {z^p}{(1/z - z)^m}
\end{eqnarray}
with real coefficients $c_{p,m}$ which depend on the free cumulants $\kappa_3, \dots, \kappa_{k+2}$ 
and do not depend on $n$. The free cumulants will be defined in Section 2.
The  summation on the right-hand side of (\ref{coef}) is taken over a finite set of non-negative integer pairs
$(p,m)$. 
The coefficients $c_{p,m}$ can be calculated explicitly. For the cases $k=1,2$ we have
$$
B_1(z) = \frac { \kappa_3 z^3} {1/z - z},\ \ 
B_2(z) = \frac {(\kappa_4 - \kappa_3^2) z^4}{1/z - z} + 
\kappa_3^2\left( \frac {z^5}{(1/z - z)^2} + \frac {z^2}{(1/z - z)^3}\right).
$$

Let us introduce some further notations. 
Denote by $\beta_q$ the  $q$th absolute moment of $\mu$, and 
assume that $\beta_q < \infty$ for some $q \geq 2$. 
Moreover, denote
\begin{eqnarray*}
a_n: = \frac {\kappa_3} {\sqrt n},\q b_n: = \frac {\kappa_4 - \kappa_3^2 + 1}n,\q 
d_n: = \frac {\kappa_4 - \kappa_3^2 + 2}n,\q n \in \nat.
\end{eqnarray*}
Introduce the Lyapunov fractions
\begin{eqnarray*}
L_{qn}: = \frac {\beta_q}{n^{(q-2)/2}} \q  \mbox{and let} \q
\rho_q(\mu_t): = \int_{|x|>t}|x|^q  \mu(dx),\ \ t>0.
\end{eqnarray*}
Denote
$$
q_1: = \min \{q,3\},\ \   q_2: = \min \{q,4\},\ \   q_3: = \min\{q,5\}.$$
For $n \in \nat$, set
$$
\eta_{qs}(n): = \inf_{0<\ve \leq 10^{-1/2}} g_{qns}(\ve),\ \  \mbox{where}\ \  
g_{qns}(\ve): = \ve^{s + 2 -q_s}+ \frac {\rho_{q_s}(\mu, \ve \sqrt n)}{\beta_{q_s}}\ve^{-q_s}
$$
provided that $\beta_q<\infty$, $q \geq s+1$, for $s = 1,2,3$, respectively.
It is easy to see that $0 < \eta_{qs}(n) \leq 10^{1+s/2} + 1$ for $s+1 \leq q_s \leq s+2$ and
$\eta_{qs}(n) \to \infty$ monotonically as $n\to \infty$ if $s+1 \leq q_s < s+2$, and $\eta_{qs}(n) \geq 1$,
$n \in \nat$, if $q_s = s+2$.

By agreement the symbols $c,\ c_1,\ c_2,\dots$, $c(\mu),\ c_1(\mu),\ c_2(\mu),\dots$ and 
$c(\mu,s)$,$\ c_1(\mu,s)$,
$\ c_2(\mu,s),\dots$ shall denote absolute positive constants, absolute positive constants depending on $\mu$ and absolute positive constants depending on $\mu$ and $s$ respectively.

In the expansion below we do not assume  the measure $\mu$ to be of compact support. 
The distribution function  $\mu_n(-\infty, x + a_n) $ admits the expansion:

\begin{eqnarray}
\label{meas1}
\lefteqn{
\mu_n(-\infty, x + a_n)  = \omega(-\infty,x)}\q \q \\
&+& \left(\frac {a_n^2}{2}U_1\left (\frac x 2\right) 
+ 
\frac {a_n} 3\left(3 - U_2 \left( \frac x 2\right)\right) - \frac {b_n - a_n^2 -1/n}{4}
U_3\left(\frac x 2\right)\right)\! p_{\omega}(x) + \rho_{2n}(x) \nonumber
\end{eqnarray}
 for $x\in \real$, $n \in \nat$, where
\begin{eqnarray}
\label{bound_bound}
|\rho_{n2}(x)| \leq c 
\left \{
\begin{array}{rcl}
\!\!\eta_{q3}(n)L_{qn}+L_{4n}^{3/2}, &  4 \leq q < 5\\
\!\!L_{5n},\q\q\q\q\q\ \ &q  \geq 5.\q\ 
\end{array}
\right.
\end{eqnarray}

Assume that $\mu$ has compact support, then for $n \geq c_1(\mu)$, $p_{\mu_n}$ admits the  expansion 
\begin{eqnarray}
\label{exp_d}
p_{\mu_n}(x + a_n)& =& \left(1 + \frac {d_n}2 - a_n^2 - \frac 1 n - a_n x - \left(b_n - a_n^2 - \frac 1 n\right) x^2 \right)
p_{\omega}(E_n x) \nonumber\\
& \  & +\ \ \frac {c \theta}{n^{3/2} \sqrt {4 - (E_n x)^2}}
\end{eqnarray}
for $x \in [-2/E_n + h, 2/E_n - h]$, where $E_n: = (1 - b_n)/\sqrt {1 - d_n}$ and $h = \frac {c_2(\mu)}{n^{3/2}}$ and $|\theta| \leq 1$.

%------------------------------------------------------------
We formulate Edgeworth type expansions 
%that were 
obtained by the general technique which is introduced in Section 3. 

First,  introduce for every $\delta \in(0,1/10)$ a rectangle $K$:
$$K: = \{x+iy: x \in [-2 +  2\delta,\  2 - 2 \delta],\ |y| < \delta\sqrt \delta\}.$$

%------------------------------------------------------------------------

The following corollary follows from Theorem \ref{extension_33}.

\begin{corollary}
\label{extension_g}
Assume  that $\mu \in \mes$ is supported  on $[-\sqrt [3]n, \sqrt [3] n]$ and $\beta_9 < \infty$.
For every $\delta \in (0,1/10)$ and $n$ such that $n \geq c(\mu) \est$, 
the Cauchy transform  $G_{\mu_n}$ has the  analytic extension
\begin{eqnarray*}
\label{extension1}
G_{\mu_n}(z) = G_\omega (z) + l_n(z),
\quad z \in K,
\end{eqnarray*} 
where
$|l_n(z)| \leq \frac {c}{\sqrt{\delta n}}$ on  $K$.
\end{corollary}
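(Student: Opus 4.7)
The plan is to deduce the corollary directly from Theorem~\ref{extension_33}, which, under the same hypotheses, should provide both the analytic continuation of $G_{\mu_n}$ from $\compl^+$ to a neighborhood of $[-2+2\delta,\,2-2\delta]$ containing $K$, and a quantitative Edgeworth-type expansion of the form
$$G_{\mu_n}(z) = G_\omega(z) + \frac{B_1(G_\omega(z))}{\sqrt{n}} + R_n(z),\qquad z \in K,$$
with an explicit bound on the remainder $R_n(z)$. Setting $l_n(z) := G_{\mu_n}(z) - G_\omega(z)$, it suffices to show that the right-hand side of this display, less $G_\omega(z)$, is at most $c/\sqrt{\delta n}$ on $K$.

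The central estimate is the bound on $B_1(G_\omega(z))/\sqrt{n}$. Using the semicircle identity $G_\omega(z)^2 - zG_\omega(z) + 1 = 0$, one finds $1 - G_\omega(z)^2 = G_\omega(z)\sqrt{z^2-4}$, where the branch of the square root is the one compatible with $G_\omega \colon \compl^+ \to \compl^-$ and its analytic continuation across $(-2,2)$. For $z = x + iy \in K$ we have $\mathrm{Re}(z^2-4) = x^2 - y^2 - 4 \leq -8\delta + O(\delta^2)$ since $|x| \leq 2-2\delta$ and $y^2 < \delta^3$, whence $|z^2-4| \geq c\delta$ and thus $|\sqrt{z^2-4}| \geq c\sqrt{\delta}$ throughout $K$. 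At the same time $|G_\omega(z)|$ stays of order $1$. Inserting the explicit formula $B_1(w) = \kappa_3 w^4/(1-w^2)$ from (\ref{coef}) then yields
$$\left|\frac{B_1(G_\omega(z))}{\sqrt{n}}\right| \leq \frac{c(\mu)}{\sqrt{\delta n}} \quad \text{on } K.$$

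The remainder $R_n(z)$ produced by Theorem~\ref{extension_33} is, by design, of smaller order: the quantitative threshold $n \geq c(\mu)\est$ is calibrated precisely so that the subsequent terms $B_k(G_\omega(z))/n^{k/2}$ for $k \geq 2$, each of which carries only a fixed negative power of $\sqrt{\delta}$ coming from finitely many powers of $(1-G_\omega^2)^{-1}$, together with the ninth-moment error produced by the proof of Theorem~\ref{extension_33}, are dominated by $c/\sqrt{\delta n}$ on $K$. Adding the two contributions completes the bound on $l_n$.

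The main obstacle, apart from what is already packaged inside Theorem~\ref{extension_33}, is the uniform lower bound $|1 - G_\omega(z)^2| \gtrsim \sqrt{\delta}$ near the two corners of $K$ where $z \approx \pm(2-2\delta)$; the sub-linear choice of height $\delta\sqrt{\delta}$ for the rectangle has been made precisely so that the square-root singularity of $\sqrt{4-x^2}$ at the endpoints of $\mathrm{supp}(\omega)$ does not destroy this control. Once this geometric fact is in hand, the rest of the deduction is routine term-by-term estimation using the explicit form of $B_1$ and the remainder estimate of Theorem~\ref{extension_33}.
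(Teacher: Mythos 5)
Your deduction rests on a misreading of what Theorem~\ref{extension_33} supplies. That theorem does not produce an Edgeworth-type expansion $G_{\mu_n}=G_\omega+B_1(G_\omega)/\sqrt n+R_n$ with a separately estimated remainder; it asserts \emph{directly} that $G_{\tilde\mu_{m+s}}(z)=G_\omega(z)+\tilde l(z)$ on $K$ with $|\tilde l(z)|\le c(s)/\sqrt{n\delta}$, proved by inverting $G^{(-1)}_{\tilde\mu_{m+s}}$ via the $R$-transform, Rouch\'e's theorem, and the bound $|G'_\omega(z)|\le (2\sqrt\delta)^{-1}$ on $K$. The corollary is therefore the literal specialization $m=n$, $s=0$ (so that $\tilde\mu_{m+s}=\mu_n$), and no term-by-term estimation of $B_1(G_\omega(z))/\sqrt n$ is needed or wanted. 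Worse, the expansion you invoke is, in this paper, only available as Theorem~\ref{exp_Cauchy}, whose proof runs through the general scheme of Theorem~\ref{th} and hence through Corollaries~\ref{lklklk}--\ref{sym_comp} and Theorems~\ref{asdfasdf}--\ref{vanish_dir}, all of which rest on Theorem~\ref{extension_33} and on the very statement you are proving. Taken literally, your argument is circular; taken charitably, it replaces a one-line specialization by a nontrivial detour whose input (a remainder bound of order $o(1/\sqrt{\delta n})$ for the expansion) is nowhere established independently.

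That said, your central analytic estimate is correct and is in substance the same geometric fact the paper exploits: from $G_\omega^2-zG_\omega+1=0$ one gets $1-G_\omega^2(z)=G_\omega(z)\sqrt{z^2-4}$, and on the rectangle $K$ of height $\delta\sqrt\delta$ one has $\mathrm{Re}(4-z^2)\ge 8\delta-4\delta^2-\delta^3$, hence $|\sqrt{z^2-4}|\ge c\sqrt\delta$ and $|1-G_\omega^2(z)|\ge c\sqrt\delta$ uniformly, including near the corners $z\approx\pm(2-2\delta)$. This is exactly the mechanism (equivalently, the bound on $|G'_\omega|$ and the lower bound (\ref{ergthnj})) that produces the factor $\delta^{-1/2}$ in the paper, and your observation that the height $\delta\sqrt\delta$ is calibrated to tame the edge singularity of $\sqrt{4-z^2}$ is accurate. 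To repair the proof, discard the expansion and simply cite Theorem~\ref{extension_33} with $m=n$, $s=0$, noting that under the hypothesis $\mathrm{supp}(\mu)\subset[-\sqrt[3]n,\sqrt[3]n]$ the truncated measure $\mu^*$ coincides with $\mu$, so $\tilde\mu_{n}=\mu_n$ and $l_n=\tilde l$ already carries the bound $c/\sqrt{\delta n}$.
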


\begin{theorem}
\label{exp_Cauchy}
Assume  that $\mu \in \mes$ is supported  on $[-\sqrt [3]n, \sqrt [3] n]$ and $\beta_9 < \infty$.
For every $\delta \in (0,1/10)$
the extension of
the Cauchy transform $G_{\mu_n}$ admits the  expansion
\begin{eqnarray*}
\lefteqn{
G_{\mu_n}(z)  =  G_\omega (z)+ \frac {\kappa_3G_\omega ^4(z)} {(1 - G_\omega ^2(z) )\sqrt n}}\\
&+& \left(\big(\kappa_4 - \kappa_3^2\big)\frac {G^5_\omega (z)}{1 - G_\omega ^2(z)} + \kappa_3^2\left(\frac{G^7_\omega (z)}{(1 - G^2_\omega (z))^2} + \frac{G^5_\omega (z)}{(1 - G^2_\omega (z))^3}\right)\right) \frac 1 n  \\
& + &\left(\frac{\kappa_5   G_\omega ^6 (z)}{( 1 - G_\omega ^2 (z))}
- \frac{\kappa_3 \kappa_4 G^8_\omega (z) \left(5 G_\omega ^2(z) - 7 \right)}{\left(1 - G_\omega ^2(z) \right)^3}
\right.\\
& + & \left. 
 \frac{\kappa_3^3 G_\omega ^{10}(z) \left(5 G_\omega ^4(z)  - 15 G_\omega ^2(z) + 12\right)}
{\left(1 - G_\omega ^2 (z)\right)^5}
\right)\frac 1 {n^{3/2}}
+ O\left(\frac 1 {n^{2} }\right)
\end{eqnarray*} 
for $z \in K$, $n \geq c(\mu) \est$.
\end{theorem}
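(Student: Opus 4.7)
The plan is to apply the general expansion scheme of Section~4 to the sequence
\[
h_n(\varepsilon_1,\ldots,\varepsilon_n;z) \trq G_{\varepsilon_1 X_1 \conv \cdots \conv \varepsilon_n X_n}(z),
\]
specialized at $\varepsilon_1 = \cdots = \varepsilon_n = 1/\sqrt n$. Before invoking the scheme I verify its hypotheses in the free setting (which is the content of Section~5): analyticity in each $\varepsilon_j$ on a neighbourhood of $0$ follows from the analytic dependence of the partial free convolution on each dilation parameter, combined with the analytic extension of Corollary~\ref{extension_g} (or its variant for partial convolutions); symmetry and compatibility are immediate, since freeness and identical distribution make $h_n$ symmetric in the $\varepsilon_j$ and setting $\varepsilon_j = 0$ removes the $j$-th summand; and the vanishing of $\partial h_n / \partial \varepsilon_j$ at $\varepsilon = 0$ reflects the assumption $m_1 = 0$.

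Next I Taylor-expand $h_n$ in each variable around zero to the smallest order that, after summation over $j = 1, \ldots, n$, produces a remainder of size $n^{-2}$ when $\varepsilon_j = 1/\sqrt n$. The Taylor coefficients of order $\varepsilon_j^k$ are expressible, via the $R$-transform / Cauchy transform dictionary, in terms of the free cumulants $\kappa_k$ of $\mu$ and the Cauchy transform of the ``residual'' convolution, the latter equal to $G_\omega$ up to the error controlled by Corollary~\ref{extension_g}. Using the identity $1/G_\omega(z) - G_\omega(z) = z$ satisfied by the semicircle transform, the formal combinatorial expressions collapse to the rational functions of $G_\omega(z)$ appearing in the statement; grouping contributions by power of $n^{-1/2}$, the orders $k = 3, 4, 5$ reproduce $B_1(G_\omega) n^{-1/2}$, $B_2(G_\omega) n^{-1}$, and the new combination involving $\kappa_5$, $\kappa_3 \kappa_4$ and $\kappa_3^3$ at order $n^{-3/2}$, respectively.

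The main obstacle will be controlling the remainder uniformly on $K$. For this I combine three ingredients: (i)~the analytic extension of Corollary~\ref{extension_g}, which by a Cauchy integral on a slightly smaller rectangle yields uniform bounds on all derivatives of $h_n$ with respect to its Cauchy-transform argument, away from the spectral endpoints $\pm 2$; (ii)~the moment assumption $\beta_9 < \infty$, which bounds the moments of $\mu$ through the ninth entering the Taylor coefficients and their remainders; and (iii)~the support condition $\mathrm{supp}(\mu) \subset [-\sqrt[3]{n}, \sqrt[3]{n}]$, which is required to bound the genuine integral remainder, where absolute moments slightly beyond order~$9$ are estimated by powers of $n^{1/3}$. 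The most delicate step is expected to be the identification of the $n^{-3/2}$ coefficient, which mixes the ``direct'' fifth-order contribution with cross-products of lower-order contributions across distinct summands, and whose recombination into the form stated in the theorem requires careful bookkeeping with the identity $1/G_\omega - G_\omega = z$.
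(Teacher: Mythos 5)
Your proposal follows essentially the same route as the paper: you verify the hypotheses of the general scheme of Section~4 (symmetry, compatibility, vanishing first derivatives, uniform bounds on the $\varepsilon$-derivatives via the analytic extension of the Cauchy transform on $K$), and then compute the coefficients by differentiating the $R$-transform subordination equation and simplifying with $1/G_\omega - G_\omega = z$, which is exactly what Section~5 and the paper's proof of Theorem~\ref{exp_Cauchy} do. The only cosmetic difference is that the paper expresses the expansion through derivatives of the limit functional $h_\infty(\unve_s;z)=G_{\omega\conv\mu_s^{(\unve_s)}}(z)$ at $\unve_s=0$ rather than through a direct termwise Taylor expansion of $h_n$ itself, but that is precisely what Theorem~\ref{th} delivers once its hypotheses are checked, so the two descriptions coincide.
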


Due to  the Stieltjes inversion formula we  obtain an expansion for the densities.
\begin{corollary}
\label{cor5}
Assume  that $\mu \in \mes$ is supported on $[-\sqrt [3]n, \sqrt [3] n]$ and $\beta_9 < \infty$.
For every $\delta \in (0,1/10)$
the density $p_{\mu_n}$ admits the  expansion 
\begin{eqnarray*}
%\lefteqn{
p_{\mu_n}(x)  =   p_\omega(x) &+& \frac{\kappa_3   \left(x^2 - 3\right) x p_\omega(x)}{(4 - x^2)\sqrt{n}}\\
& - &
 \frac{\left( \kappa_4 \left(x^6 - 8 x^4 +18 x^2 - 8\right)
- \kappa_3^2 \left(2 x^6 - 15 x^4 + 30 x^2 - 10\right) \right)p_\omega(x)}{ \left(4-x^2\right)^{2}n}\\
& + & \left( \frac{ \kappa_5  ( x^4  - 5 x^2 + 5)x}{(4 - x^2)} +
\frac{\kappa_3 \kappa_4(5 x^6  - 42 x^4  + 105 x^2 - 70 )x}{(4 - x^2)^{2}}
\right. \\
& + & \left. \frac{\kappa_3^3 (5 x^8 -60 x^6 + 252 x^4 -420 x^2 + 210)x} { (4 - x^2)^{3}}
\right) \frac {p_\omega(x)} { n^{3/2}}
  +  O\left(\frac 1 {n^{2} }\right)
\end{eqnarray*}
for $x \in [- 2 + 2 \delta, 2 - 2  \delta]$, $n \geq c(\mu) \est$.
\end{corollary}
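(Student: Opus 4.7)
The plan is to derive the expansion of $p_{\mu_n}$ by applying the Stieltjes inversion formula to the analytic extension of $G_{\mu_n}$ furnished by Theorem \ref{exp_Cauchy}. Because the expansion holds on the rectangle $K$, which straddles the real axis and contains $[-2+2\delta,\, 2 - 2\delta]$, one has
\[
p_{\mu_n}(x) \;=\; -\frac{1}{\pi} \operatorname{Im} G_{\mu_n}(x)
\]
for $x$ in this interval, with $G_{\mu_n}(x)$ read as the value of the analytic extension on the real line. Applied to $G_\omega(x) = (x - i\sqrt{4-x^2})/2$, the same rule reproduces $p_\omega(x) = \sqrt{4-x^2}/(2\pi)$; in particular, $\operatorname{Re} G_\omega(x) = x/2$ and $\operatorname{Im} G_\omega(x) = -\pi p_\omega(x)$.

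The key computational tool is the quadratic equation $G_\omega^2(z) - z\, G_\omega(z) + 1 = 0$, which on $(-2,2)$ yields the two identities
\[
G_\omega^2(x) = x\, G_\omega(x) - 1, \qquad 1 - G_\omega^2(x) \;=\; i\sqrt{4-x^2}\, G_\omega(x).
\]
The second identity collapses each denominator $(1 - G_\omega^2)^k$ appearing in Theorem \ref{exp_Cauchy} into $i^k (4-x^2)^{k/2} G_\omega^k$, so that every summand of the Cauchy expansion becomes a polynomial in $G_\omega(x)$ divided by a power of $\sqrt{4-x^2}$. The first identity, via the induced recursion $G_\omega^{k+1} = x\, G_\omega^k - G_\omega^{k-1}$, expresses $G_\omega^k = U_{k-1}(x/2)\, G_\omega(x) - U_{k-2}(x/2)$ in terms of the Chebyshev polynomials from \eqref{Cheb}. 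After this reduction, extracting the imaginary part is purely algebraic in $x$, since $\operatorname{Re} G_\omega$ and $\operatorname{Im} G_\omega$ are both elementary on $(-2,2)$.

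I would then process the three correction terms of Theorem \ref{exp_Cauchy} in turn: the $\kappa_3$ term at order $n^{-1/2}$; the $\kappa_4 - \kappa_3^2$ and $\kappa_3^2$ terms at order $n^{-1}$; and the $\kappa_5$, $\kappa_3\kappa_4$, $\kappa_3^3$ terms at order $n^{-3/2}$. In every case the recipe is the same: substitute using the two identities, reduce high powers of $G_\omega$ via the Chebyshev recursion, take imaginary parts, and clear factors against $p_\omega(x) = \sqrt{4-x^2}/(2\pi)$ so the answer packages as a rational function in $x$ times $p_\omega(x)$. The error $O(1/n^2)$ is preserved by $-\frac{1}{\pi}\operatorname{Im}$ uniformly on the real segment, since Theorem \ref{exp_Cauchy} provides a uniform bound on all of $K$. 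The main obstacle is the $n^{-3/2}$ term: the factor $(1 - G_\omega^2)^{-5}$ forces one to manage $G_\omega^{10}$ and hence $U_9(x/2)$, and carrying the recursion through degree nine while correctly simplifying the resulting rational functions to the closed forms $x^4 - 5x^2 + 5$, $5x^6 - 42x^4 + 105x^2 - 70$, and $5x^8 - 60x^6 + 252x^4 - 420x^2 + 210$ stated in the corollary is mechanical but error-prone, and is where the real work lies.
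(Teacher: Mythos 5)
Your proposal is correct and follows essentially the same route as the paper: the paper's proof of Corollary \ref{cor5} consists precisely of substituting the explicit boundary values of $G_\omega$ from (\ref{semiext}) into the expansion (\ref{jiueyr}) of Theorem \ref{exp_Cauchy} and taking imaginary parts via the Stieltjes inversion formula (\ref{SIF}). Your additional observations --- the identities $G_\omega^2(x) = x\,G_\omega(x) - 1$ and $1 - G_\omega^2(x) = i\sqrt{4-x^2}\,G_\omega(x)$ and the Chebyshev reduction of powers of $G_\omega$ --- are a sound and more explicit organization of the algebra the paper leaves implicit.
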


%In  contrast to  (\ref{exp_d}), our expansion in the bulk of the spectrum
%does not use  an $n$-dependent  shift of the point $x$.
%The expansion for  $\mu_n$ can be  obtained  by integrating the density expansion from Corollary \ref {cor5}.
Denote by $U_n(x)$ the Chebyshev polynomial of the second kind of degree $n$, which is given by the recurrence relation:
\begin{eqnarray}
\label{Cheb1}
U_0(x) = 1,\ \ 
U_1(x)  = 2x,\ \ 
U_{n+1}(x)  = 2x U_n(x) - U_{n-1}(x).
\end{eqnarray}

\begin{corollary}
\label{cor6}
Assume  that $\mu \in \mes$ with $\beta_9 < \infty$.
For every $\delta \in (0,1/10)$
the distribution $\mu_n$ admits the  expansion

\begin{eqnarray}
\label{distr}
%\lefteqn{
\mu_n(a,b) =  
\omega(a, b) &+& \left[  -  \kappa_3U_2\left(\frac x  2\right)\frac{p_\omega(x)}{3 \sqrt{n}}\right.\\
& + & \left(-\kappa_4  U_3\left(\frac x  2\right) + 
2\kappa_3^2 \left (U_3\left(\frac x  2\right) + U_1\left(\frac x  2\right) -
 \frac {U_1\left(\frac x  2\right)} {4 -  x^2}\right)  \right) \frac{p_\omega(x)}{4n}\nonumber\\
& + &  \left(  \frac {\kappa_5} 5 U_4\left (\frac x 2\right) 
- \frac {\kappa_3 \kappa_4}{4 - x^2}\left ( U_6\left (\frac x 2\right)  - U_4\left (\frac x 2\right)\right) \right. \nonumber\\
& - & \left. \left. \frac {\kappa^3_3}{3(4 - x^2)^2} \left( 3 U_8\left (\frac x 2\right) 
 - 7 U_6\left (\frac x 2\right)  + 4 U_4\left (\frac x 2\right) \right)\right)
\frac {p_\omega(x)} { n^{3/2}}\right] \Bigg |_a^b
+ O\left(\frac 1 {n^{2} }\right) \nonumber
\end{eqnarray}
with $(a,b) \subset [- 2 + 2 \delta, 2 - 2 \delta]$, $n \geq c(\mu) \est$ and
$U_n(x)$ are Chebychev polynomials (\ref{Cheb1}).
\end{corollary}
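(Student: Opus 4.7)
The plan is to deduce Corollary \ref{cor6} from the density expansion of Corollary \ref{cor5} by integrating term by term from $a$ to $b$ and repackaging the resulting antiderivatives as Chebyshev polynomials $U_k(x/2)$ of the second kind.

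Since $(a,b)\subset[-2+2\delta,2-2\delta]$ lies strictly inside the bulk, every summand in the density expansion is bounded and continuously differentiable on $[a,b]$. The leading term integrates to $\omega(a,b)$, and the uniform remainder $O(1/n^2)$ integrated over an interval of length at most $4$ remains $O(1/n^2)$. It therefore suffices to produce explicit primitives for each expression of the form
\[
\frac{R_k(x)\, p_\omega(x)}{(4-x^2)^k}
\]
occurring on the right-hand side of Corollary \ref{cor5}, with $R_k$ a polynomial and $k \in \{1,2,3\}$.

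The cleanest way to obtain these primitives is the substitution $x = 2\cos\theta$, under which
\[
p_\omega(x) = \tfrac{1}{\pi}\sin\theta, \qquad U_j(x/2) = \frac{\sin((j+1)\theta)}{\sin\theta}, \qquad 4 - x^2 = 4\sin^2\theta, \qquad dx = -2\sin\theta\, d\theta.
\]
Each integrand becomes a trigonometric polynomial in $\theta$ (after clearing $\sin\theta$ using $2\sin(m\theta)\sin\theta = \cos((m-1)\theta) - \cos((m+1)\theta)$), which integrates to a sum of $\sin(j\theta)/j$. These are then recombined back into the $x$-variable via $U_{j-1}(x/2)\, p_\omega(x) = \sin(j\theta)/\pi$. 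The identification is most efficiently verified directly in $x$ using the master identity
\[
\frac{d}{dx}\Bigl[U_k(x/2)\,p_\omega(x)\Bigr] = -\frac{(k+1)\,T_{k+1}(x/2)}{\pi\sqrt{4-x^2}},
\]
together with its analogues obtained by applying $\frac{d}{dx}$ to $U_k(x/2)\,p_\omega(x)/(4-x^2)^m$, which produce the polynomial-over-$(4-x^2)^{m+1/2}$ shapes appearing in the density expansion. Matching coefficients pins down, e.g., at order $n^{-1/2}$:
\[
\int\frac{(x^2-3)x\, p_\omega(x)}{4-x^2}\,dx = -\frac{1}{3}\,U_2(x/2)\,p_\omega(x),
\]
which is an elementary check by differentiation.

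The main technical step, and the only genuine obstacle, is the bookkeeping at order $n^{-3/2}$: the $(4-x^2)^3$ denominator forces repeated use of the product-to-sum formula, and the correct combination of $U_4(x/2)$, $U_6(x/2)$, $U_8(x/2)$ with the cumulant coefficients $\kappa_5$, $\kappa_3\kappa_4$, $\kappa_3^3$ only emerges after systematically collecting terms of the same $\sin(j\theta)$ frequency. Once this combinatorial step is carried out, the formula (\ref{distr}) follows by evaluating at the endpoints. Structurally, no new analytic input is needed beyond Corollary \ref{cor5}; the derivation is exact integration of a finite trigonometric expansion.
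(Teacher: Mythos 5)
Your computational core --- integrating the density expansion term by term via $x=2\cos\theta$ and recombining the primitives into Chebyshev polynomials $U_k(x/2)\,p_\omega(x)$ --- is exactly the route the paper takes (its proof is literally ``we integrate the expansion for densities''), and your sample identity at order $n^{-1/2}$ checks out by direct differentiation. However, there is a genuine gap: Corollary \ref{cor6} assumes only $\beta_9<\infty$, whereas Corollary \ref{cor5} additionally requires $\mathrm{supp}(\mu)\subset[-\sqrt[3]{n},\sqrt[3]{n}]$. So Corollary \ref{cor6} cannot follow from Corollary \ref{cor5} by integration alone; your claim that ``no new analytic input is needed beyond Corollary \ref{cor5}'' is false as stated, and your argument as written only proves the result under the extra support hypothesis.

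The missing ingredient is the truncation step from Section 6.1. One replaces $\mu$ by the truncated, recentered and rescaled measure $\mu^*$ supported in $[-n^{1/3},n^{1/3}]$, applies the density expansion and your integration to $\mu^*_n$, and then transfers the result back to $\mu_n$ using the Kolmogorov-distance bounds (\ref{ineq1}) and (\ref{ineq2}), namely $d_K(\mu_n,\hat\mu_n)\leq \beta_9 n^{-2}$ and $d_K(\hat\omega,\omega)\leq c\,n^{-2}$, which show the truncation moves the \emph{distribution function} by only $O(n^{-2})$ --- an error absorbed by the remainder. (This is precisely why the support assumption can be dropped for the distribution function but is retained for the density: Kolmogorov distance controls $\mu_n(a,b)$ but not $p_{\mu_n}$.) One also needs the easy observation that the cumulants of $\mu^*$ differ from those of $\mu$ by $O(n^{-2})$, so the coefficients may be written in terms of $\kappa_3,\kappa_4,\kappa_5$ of the original measure without affecting the error term. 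With that paragraph added, your proof matches the paper's.
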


\begin{remark}
If we assume that $\beta_k < \infty$, $k>9$, then
the above results with accuracy $O(n^{-2})$ in   Theorem \ref {exp_Cauchy}, Corollary \ref {cor5} and Corollary \ref {cor6} can be easily expanded to the higher orders using more terms of the
scheme for asymptotic expansions (\ref {expansiondf}), provided in the interval $[-2 + 2\delta, 2 - 2 \delta]$.
\end{remark}

\begin{remark}
Assume that $m_3 = 0$, then due to (\ref {distr})  we get
$$
\mu_n(a,b)  =
\omega(a,b)  -  \left[\frac {\kappa_4} {4n} U_3\left (\frac x 2\right) 
 - \frac{\kappa_5}{5 n^{3/2}}U_4\left (\frac x 2\right) \right]p_\omega(x)\Bigg |_a^b 
 +  O\left(\frac 1 { n^{2} }\right)
$$
with $(a,b) \subset [-2 + 2 \delta, 2 - 2 \delta]$, $n \geq c(\mu) \est$.
\end{remark}

In the example below we consider  asymptotic expansions for free convolutions of the free Poisson law.
\begin{example}[Free Poisson law]
{\rm
Let us consider the free Poisson law with density 
$$
p_{\mu}(x) = \frac 1 {2 \pi (x+1)} \sqrt{4(x+1) - (x+1)^2},\q  - 1 \leq x \leq 3,
$$ 
which has moments $m_1 =0$, $m_2 = 1,$ $m_3 = 1$,
$m_4 = 3$, $m_5= 6$. 
The density of $p_{\mu_n}(x)$ is given by
$$
p_{\mu_{n}}(x) = \frac{\sqrt{(4n - 1)+2 \sqrt{n} x - n x^2}}{2\pi  \left(\sqrt{n}+x\right)},\q
-2 + n^{-1/2} \leq x \leq 2 - n^{-1/2}.
$$
We consider $p_{\mu_{10}}(x)$ and $p_{\mu_{100}}(x)$:
$$
p_{\mu_{10}}(x) = \frac{\sqrt{39+2 \sqrt{10} x-10 x^2}}{2\pi  \left(\sqrt{10}+x\right)}, \q
-2 + 1/\sqrt{10} \leq x \leq 2 + 1/\sqrt{10};
$$
$$
p_{\mu_{100}}(x) = \frac{\sqrt{399+20 x-100 x^2}}{2 \pi  (10 + x)},\q
- 2 +1/10 \leq x \leq 2 - 1/10.
$$
In  Figure \ref{fig1}, one can see plots of the densities and the approximations of the densities based on 
Corollary \ref{cor5}.

\begin{figure}
\center
\includegraphics[scale = 0.55]{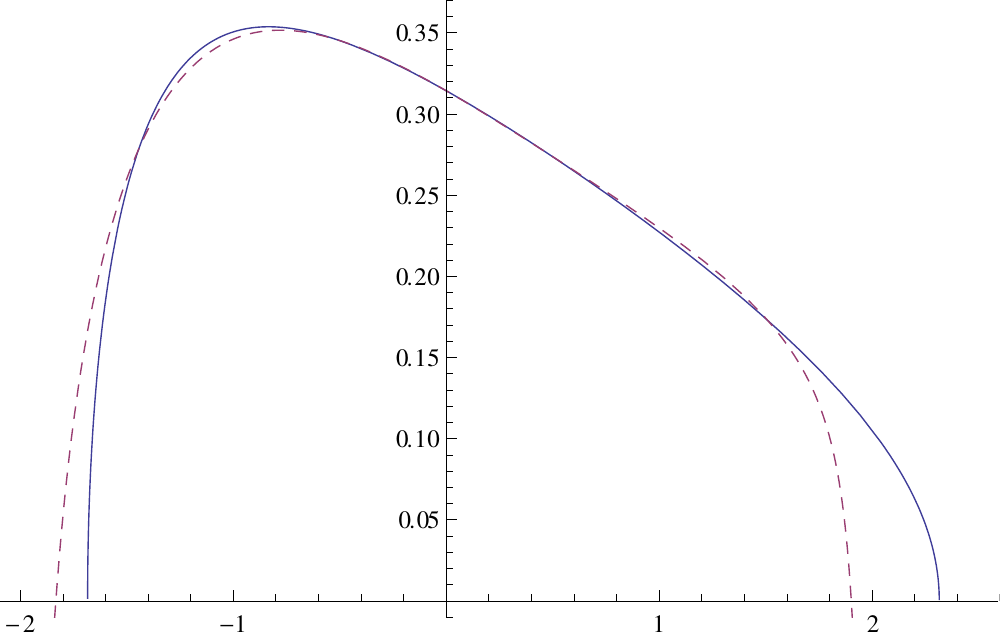}
\q\ \ 
\includegraphics[scale = 0.55]{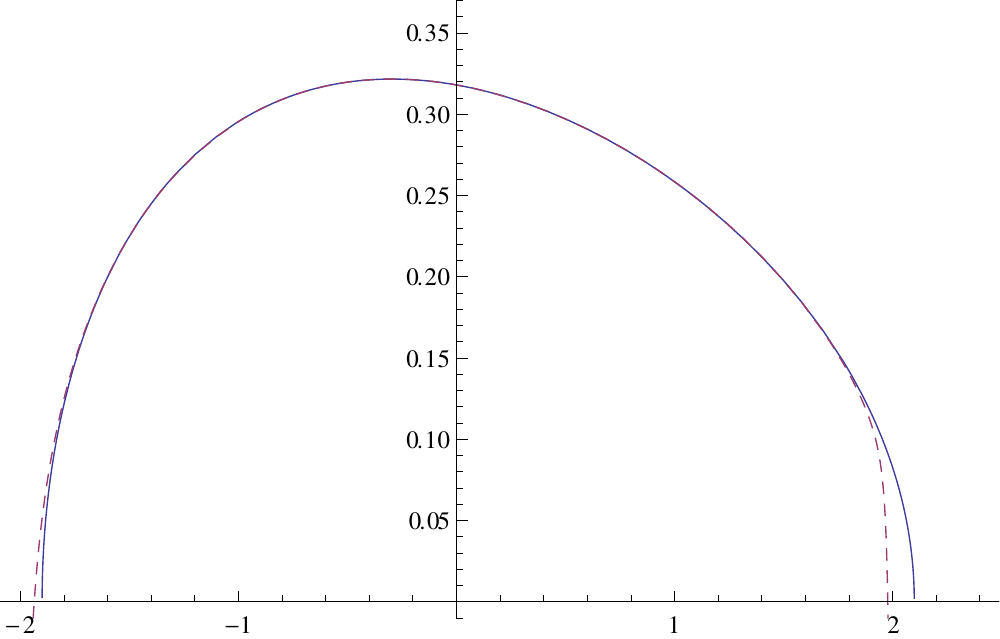}
\caption{Comparison of the asymptotic expansion, shown as the dashed line and the exact result, shown as the solid line, 
$n = 10$ (on the left) and $n=100$ (on the right).}
\label{fig1}
\end{figure} 
}

\end{example}

%-----------------------------------

\section{Preliminaries}

\subsection{Free convolution}
Let us assume that the measure $\mu \in \mes$ has  compact support contained in $ [-L,L]$.
Recall that the Cauchy transform is defined by
$$G_\mu(z) = \int_\mathbb{R} {\frac {\mu(dx)} {z-x}}, \quad z \in \compl^+,$$
which is an analytic function on the  upper half-plane.
A measure is uniquely determined  by its Cauchy transform and
can be recovered from its Cauchy transform by the Stieltjes inversion formula:
\begin{equation}
\label{SIF}
\mu(a,b) = -\frac 1 {\pi} \lim _{y \downarrow 0}\int_a^b \Im G_\mu (x+iy)dx, \q \mu(\{a\}) = \mu(\{b\})=0.
\end{equation}
Since $\mu$ is compactly supported the Cauchy transform has the following power series expansion at $z= \infty$
\begin{equation}
\label{jfjfjf}
G_\mu(z) = \sum_{k=0}^\infty \frac {m_k} {z^{k+1}},
\end{equation}
where $m_k$ are the moments of the measure $\mu$.  Moreover, $|m_k| \leq L^k$. 
It is easy to see that $G_{\mu}(z) = \frac 1 z (1 + o(1))$ at $z = \infty$.
The series (\ref{jfjfjf}) is 
univalent for large $z$ ($|z| >L$) and we can define its functional inverse
$K_\mu(z)$ such that  
$K_\mu(G_\mu(z)) = z,$
which  converges in a neighbourhood of zero. Let us introduce the  function 
\begin{eqnarray}
R_{\mu}(z) = K_\mu (z) - \frac 1 z.
\end{eqnarray}
\label{kfjdshsut}
This function  is called the $R$-transform and can be expressed as formal power series: 
\begin{eqnarray*}
R_\mu(z) = \sum_{l=0}^\infty \kappa_{l+1}z^l,
\end{eqnarray*}
where the coefficients $\kappa_k$ are called the free cumulants of a corresponding measure.
In the case when $m_1 =0$  and $m_2 = 1$ we note that
$\kappa_1 = 0$, $\kappa_2 = 1$, $\kappa_3 = m_3$, $\kappa_4 = m_4 - 2$, $\kappa_5 = m_5 - 5 m_3$.
For  cumulants of higher order the following inequalities have been established in \cite{Kar07}:

\begin{equation}
\label{cum}
|\kappa_l| \leq \frac {2L} {l-1} (4L)^{l-1}, \quad l \geq 2.
\end{equation}

Next, we note some scaling properties of  the Cauchy transform and the $R$-transform. 
We denote by $D_t \mu$ the dilation of a measure $\mu$ by the factor $t$:
$$D_t \mu(A) = \mu (t^{-1}A), \qquad (A \subset \mathbb R \quad \mbox {measurable}).$$
Then the Cauchy transform and  the $R$-transform of the  rescaled measure $D_t \mu$ are
\begin{equation}
\label{dilation}
G_{D_t \mu}(z) =  t^{-1} G_\mu(t^{-1} z )
\q \mbox{and}\q
R_{D_t \mu}(z) = t R_\mu(t z).
\end{equation}

Voiculescu in  \cite{V85} proved that 
for two given compactly supported probability measures $\mu_1$ and $\mu_2$  the $R$-transform of the free convolution 
$\mu_1 \boxplus \mu_2$ is given by the formula
\begin{equation}
\label{r-tr}
R_{\mu_1 \boxplus \mu_2}(z) = R_{\mu_1}(z) + R_{\mu_2}(z),
\end{equation}
on the common domain of these functions. 
 Moreover, (\ref{r-tr}) implies that the free convolution is commutative and associative.

%\subsection {Analytic approach to the definition of free convolution.}
Let us introduce the reciprocal  Cauchy transform
$$F_\mu(z) = 1/G_\mu(z),\q z \in \compl^+ ,$$
which  is an analytic self-mapping of $\mathbb{C^+}$.

%The class of reciprocal Cauchy transforms can be described as a subclass of the Nevanlinna functions.
%\begin{definition}
%The {\rm Nevanlinna class} is the class of analytic functions $f(z): \compl^+ \to \{z: \Im z \geq 0\}$
%with the integral representation
%\begin{equation}
%\label{nev}
%f(z) = a +bz + \int\frac {1+ tz}{t-z}\rho(dt),\q z \in \compl^+,
%\end{equation}
%where $b \geq 0$, $a\in \real$ and $\rho$ is a nonnegative finite measure.
%\end{definition}
%From the integral representation (\ref{nev}) it follows that $f(z) = (b+ o(1))z$ 
%for $z \in \compl^+$ such that $|\Re z| /\Im z$ stays bounded as $|z| \to \infty$ (or $z \to \infty$ nontangentially to $\real$). 
%Hence if $b\neq0$, then $f$ has a right inverse $f^{(-1)}$ defined on the domain 
%$$
%\Gamma_{\alpha,\beta}: = \{z \in \compl^+: |\Re z| < \alpha \Im z,\ \Im z > \beta\}
%$$
%for any $\alpha >0$ and some positive $\beta = \beta (f, \alpha)$.
%For more details about Nevanlinna functions we refer to \cite{Akh65}, Section 3 and \cite{AkG}, Section 6.

%The class of the reciprocal Cauchy transforms of all $\mu \in \mes$ is a subclass of the Nevanlinna functions such that $f(z)/z \to 1$ as 
%$z \to \infty$ nontangentially to $\real$. We will denote this class by $\mathcal F$. 
%It is easy to see that  the reciprocal Cauchy transform $F_\mu$ admits the representation (\ref{nev}) with $b = 1$.
%The functions $f \in \mathcal F$ satisfy the inequality
%\begin{eqnarray*}
%\Im f(z) \geq \Im z, \q z \in \compl^+.
%\end{eqnarray*}

Chistyakov and G\"otze 
\cite{ChG11}, Bercovici and Belinschi  \cite{BB07}, Belinschi \cite{Bel08}
proved  the subordination property of free convolution: there exit analytic functions 
$Z_1, Z_2: \compl^+ \to \compl^+$ such that 
$$
\lim_{y \uparrow \infty} \frac {Z_j(i y)}{iy} = 1 , \q j=1,2.
$$
Functions $Z_1$ and $Z_2$ are called subordination functions and satisfy equations:
%using complex analytic methods 
%that for $\mu_1,\ \mu_2 \in \mathcal M$ the subordination functions $Z_1,\  Z_2 \in \mathcal F$  satisfy the following equations for  $z \in \compl^+$:
\begin{eqnarray}
\label{def}
z = Z_1(z) + Z_2(z) - F_{\mu_1}(Z_1(z));
\end{eqnarray}
\begin{equation}
\label{ddeeff}
F_{\mu_1 \boxplus \mu_2}(z) = F_{\mu_1}(Z_1(z)) = F_{\mu_2}(Z_2(z)).
\end{equation}
The next result is due to Belinschi \cite{Bel08} (see Theorem 3.3 and Theorem 4.1).
\begin{theorem}
\label{Belinschi}
Let $\mu_1,\ \mu_2$ be two Borel probability measures on $\real$, neither of them a point mass. 
The following hold:
\begin{enumerate}
\item
The subordination functions from (\ref {def}) and (\ref {ddeeff}) have limits 
$Z_j(x): = \lim_{y\downarrow 0}Z_j(x+i y),$ $ j=1,2,$ $x \in \real.$
\item 
The absolutely continuous part of $\mu_1 \conv \mu_2$ is always nonzero, and its density is analytic wherever
positive and finite, and $F_{\mu_1 \conv \mu_2}$ extends analytically in a neighbourhood of every point 
where the density is positive and finite.
\end{enumerate}
\end{theorem}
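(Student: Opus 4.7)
The plan is to establish (1) first and then leverage it, together with standard properties of Nevanlinna functions, to obtain (2).

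For part (1), the central fact is that each subordination function $Z_j\colon \compl^+ \to \compl^+$ is a Pick (Nevanlinna) function, so it admits a Nevanlinna representation $Z_j(z) = \alpha_j z + \beta_j + \int_\real \frac{1+tz}{t-z}\, d\sigma_j(t)$ with $\alpha_j \geq 0$, $\beta_j \in \real$, and $\sigma_j$ a finite positive Borel measure. The normalization $Z_j(iy)/(iy) \to 1$ forces $\alpha_j = 1$ and $\sigma_j(\real) < \infty$. Moreover, combining (\ref{def}) with the basic inequality $\Im F_{\mu_j}(z) \geq \Im z$ for self-maps of the upper half-plane coming from probability measures yields the stronger subordinate growth bound $\Im Z_j(z) \geq \Im z$ on $\compl^+$. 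From here I would argue that, at any fixed $x_0 \in \real$, either $\Im Z_j(x_0 + iy)$ stays bounded as $y \downarrow 0$, in which case the Nevanlinna representation and monotone convergence give a finite non-tangential limit, or it diverges to $+\infty$, in which case $Z_j(x_0 + iy) \to \infty$ in the one-point compactification along the vertical ray (using again that $\Im Z_j \geq \Im z$ rules out oscillation of the argument). Either way, the limit exists in $\overline{\compl^+}$.

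For part (2), I would use the subordination identity $F_{\mu_1 \conv \mu_2}(z) = F_{\mu_1}(Z_1(z))$ together with part (1). Fix $x_0 \in \real$ where the density $p_{\mu_1 \conv \mu_2}(x_0)$ is positive and finite. By the Stieltjes inversion formula (\ref{SIF}), this is equivalent to $\Im G_{\mu_1 \conv \mu_2}(x_0+i0)$ being strictly negative and finite, hence $F_{\mu_1 \conv \mu_2}(x_0+i0)$ has strictly positive, finite imaginary part. The subordination identity $F_{\mu_1}(Z_1(x_0+i0)) = F_{\mu_1 \conv \mu_2}(x_0+i0)$ combined with $\Im Z_1 \geq \Im z$ forces $Z_1(x_0+i0) \in \compl^+$ (not on the real axis), and analyticity of $Z_1$ and $F_{\mu_1}$ near these points allows $F_{\mu_1 \conv \mu_2}$ to be continued analytically across a neighbourhood of $x_0$ via Schwarz reflection; the density inherits analyticity from $F_{\mu_1 \conv \mu_2}$ through the formula $p = \Im F/(\pi |F|^2)$. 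For non-triviality of the absolutely continuous part, I would argue by contradiction: if $\Im F_{\mu_1 \conv \mu_2}(x+i0)$ were zero almost everywhere on $\real$, then by (\ref{ddeeff}) the same would hold for $F_{\mu_1} \circ Z_1$ and $F_{\mu_2} \circ Z_2$, which together with the structure of $Z_j$ forces both $\mu_1$ and $\mu_2$ to be purely atomic in a way incompatible with the hypothesis that neither is a point mass.

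The main obstacle is part (1): obtaining boundary limits at \emph{every} real point rather than almost every real point, where the standard Fatou theorem for bounded-type analytic functions only gives the latter. The resolution hinges on the expansive property $\Im Z_j(z) \geq \Im z$, which is inherited from the subordination equation and rules out the pathological boundary behaviour (oscillating argument, dense set of singularities of non-Blaschke type) that a generic Pick function may exhibit. Once (1) is in place, part (2) reduces to analytic continuation by the reflection principle applied to the subordination identity.
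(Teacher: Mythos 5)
First, a structural point: the paper does not prove this statement at all --- it is quoted from Belinschi \cite{Bel08} (his Theorems 3.3 and 4.1), so there is no in-paper proof to compare against. Your proposal therefore has to stand on its own, and as written it does not.

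The essential gap is in part (1). You reduce the existence of boundary limits at \emph{every} real point to the claim that a Pick function $Z_j$ with $\Im Z_j(z)\geq \Im z$ obeys the dichotomy ``either $\Im Z_j(x_0+iy)$ stays bounded as $y\downarrow 0$, in which case the Nevanlinna representation and monotone convergence give a finite limit, or it diverges to $+\infty$''. This dichotomy is neither exhaustive nor is its first branch valid. Writing $Z_j(z)=z+g_j(z)$ with $g_j$ a Pick function --- which is all that $\Im Z_j\geq\Im z$ buys you --- the question becomes whether a general Pick function has a limit along every vertical ray, and it does not: Fatou's theorem gives nontangential limits only almost everywhere. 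At an exceptional point $x_0$ the real part of $\int_\real (1+tz)(t-z)^{-1}\,d\sigma_j(t)$ can oscillate as $y\downarrow 0$ while the imaginary part stays bounded (choose $\sigma_j$ supported on a sequence $t_k\to x_0$ approaching from alternating sides with suitable masses); and monotone convergence is not available for the imaginary part either, since the kernel $y(1+t^2)/((t-x_0)^2+y^2)$ increases and then decreases in $y$ for fixed $t\neq x_0$. So the expansive property rules out none of the pathology you want to exclude. The actual content of Belinschi's theorem is that the \emph{coupled} system (\ref{def})--(\ref{ddeeff}) forces regularity that an individual Pick function need not have; his proof runs through the Julia--Carath\'eodory theorem and a careful analysis of the fixed-point characterization of $(Z_1,Z_2)$, not through the Nevanlinna representation of each $Z_j$ separately.

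Part (2) has a secondary but real problem: knowing $Z_1(x_0+i0)\in\compl^+$ does not permit Schwarz reflection, which requires \emph{real} boundary values --- exactly the opposite of the situation you are in. The correct route is to continue $Z_1$ (and $Z_2$) analytically across $x_0$ by applying the implicit function theorem to the subordination system, using that $F_{\mu_1}$ and $F_{\mu_2}$ are analytic at the interior points $Z_1(x_0+i0)$, $Z_2(x_0+i0)$ and verifying nondegeneracy of the relevant derivative; analyticity of $F_{\mu_1\conv\mu_2}=F_{\mu_1}\circ Z_1$ and of the density then follows. (This is, incidentally, the same Newton--Kantorovich/implicit-function mechanism the paper itself exploits in Proposition \ref{bla-bla}.) Your argument for the nonvanishing of the absolutely continuous part is likewise only the shell of a contradiction: the step ``forces both $\mu_1$ and $\mu_2$ to be purely atomic'' is asserted, not derived, and it is precisely where the work lies.
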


\subsection{Semicircle law.} 
The semicircle law plays a key role in free probability. 
The centered semicircle distribution of variance $t$  is denoted by $\omega_t$ and has the density
$$p_{\omega_t}(x) = \frac{1}{2\pi t}\sqrt{(4 t - x^2)_+}, \q x \in \real,
$$
where $a_+: = \max \{a,0\}$.
We denote by $\omega$ the standard semicircle law that has zero mean, unit variance and the density
$$
p_\omega(x) = \frac 1 {2\pi} \sqrt{(4 - x^2)_+},
\q x \in \real.
$$
The  Cauchy transform of $\omega_t$ is given by
$$G_{\omega_t}(z) = \frac {z - \sqrt {z^2 - 4t}}{2t}, \qquad z \in \compl^+.$$
The function $\sqrt {z^2 - 4t}$ is double-valued and has branch points at $z = \pm 2\sqrt t$. We can define two single-valued analytic branches on the complex plane cut along the segment 
$ -2\sqrt t \leq  x \leq 2\sqrt t$ of the real axis. Since the Cauchy transform  has  asymptotic behaviour $1/z$ at infinity, 
we can choose  a branch such that $\sqrt{-1}=i$ on $\compl^+$.
The Cauchy transform $G_{\omega_t}(z)$ has a continuous extension to $\mathbb C^+ \cup \mathbb R$ which acts on $\mathbb R$ by
\begin{eqnarray}
\label{semiext}
\left \{
\begin{array}{rcl}
(x - i \sqrt {4t-x^2})/2t, \qquad \mbox{if}\  |x| \leq 2\sqrt t; \\
(x -  \sqrt {x^2 - 4t})/2t, \ \qquad \mbox{if}\  |x| > 2\sqrt t. \\
\end {array}
\right.
\end{eqnarray}
We see that for every $\delta > 0$, the function $G_{\omega_t}$ can be continued analytically to the domain
$K =  \{x+i y: x \in (-2\sqrt t,2\sqrt t), |y|<\delta\}$ and beyond to the whole Riemann surface 
This analytic continuation is  again denoted by $G_{\omega_t}$. It has the  explicit formula 
$G_{\omega_t}(z) = (z - i \sqrt{4t - z^2})/2t$, where the branch of the square root on $\mathbb C^+$  is chosen such that $ \sqrt{-1} = i$. 
The function $G_{\omega}$ satisfies the functional equation
\begin{equation}
\label{func}
G_{\omega}(z)+ F_{\omega}(z) = z, \quad z\in \compl^+ \cup K.
\end{equation}
One can compute the $R$-transform of the semicircle law: 
$R_{\omega}(z) =  z.$

\subsection{Distance between two measures.}

Below we recall a number of results that we need in the sequel.

We  introduce the Kolmogorov (or uniform) distance between two measures $\mu$ and $\lambda$, which is defined by the formula
\begin{eqnarray*}
d_K(\mu,\lambda) = \sup_{x \in \real}\{|\mu((-\infty,x)) - \lambda((-\infty,x))|\}.
\end{eqnarray*}
%
%Recall  the definition of  Levy distance, which metrizes the weak convergence on the space of probability measures.
and the Levy distance between two measures $\mu$ and $\lambda$ is defined by the formula
$$
d_L (\mu,\lambda) =  \inf \{s \geq 0: \mu((-\infty,x - s)) - s \leq \lambda ((-\infty, x)) 
\leq \mu((-\infty, x+s)) + s,\  \forall   x \in \real\}.
$$
The Levy distance is related to the Kolmogorov one by the inequality:
$$
d_L(\mu,\lambda) \leq d_K(\mu,\lambda).
$$

We  need the following result by Voiculescu and Bercovici \cite{BV93} about continuity of free convolutions with respect to the Levy and Kolmogorov distances. 

\begin{theorem}
\label{levy_dist_in}
If $\mu_1,\ \mu_2,\ \nu_1,$ and $\nu_2 \in \mes$, then 
\begin{eqnarray*}
d_L(\mu_1 \conv \nu_1, \mu_2 \conv \nu_2) \leq d_L(\mu_1,\mu_2) + d_L(\nu_1, \nu_2),\\
d_K(\mu_1 \conv \nu_1, \mu_2 \conv \nu_2) \leq d_K(\mu_1,\mu_2) + d_K(\nu_1, \nu_2).
\end{eqnarray*} 
\end{theorem}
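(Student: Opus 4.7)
The plan is to reduce the two-sided statement to a one-sided version via the triangle inequality, and then exploit the subordination representation of free convolution. By the triangle inequality,
\[
d_L(\mu_1 \conv \nu_1, \mu_2 \conv \nu_2) \leq d_L(\mu_1 \conv \nu_1, \mu_2 \conv \nu_1) + d_L(\mu_2 \conv \nu_1, \mu_2 \conv \nu_2),
\]
and analogously for $d_K$; so it suffices to prove the one-sided inequality $d_L(\mu_1 \conv \nu, \mu_2 \conv \nu) \leq d_L(\mu_1, \mu_2)$ (and its Kolmogorov counterpart) for arbitrary $\mu_1, \mu_2, \nu \in \mes$.

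For the one-sided bound I would use the subordination property of Theorem \ref{Belinschi}: for each $i = 1, 2$ there exists an analytic $Z_i : \compl^+ \to \compl^+$ satisfying $Z_i(z)/z \to 1$ non-tangentially as $z \to \infty$, $\Im Z_i(z) \geq \Im z$, and
\[
G_{\mu_i \conv \nu}(z) = G_{\mu_i}(Z_i(z)), \qquad z \in \compl^+.
\]
Writing
\[
G_{\mu_1 \conv \nu}(z) - G_{\mu_2 \conv \nu}(z) = \bigl(G_{\mu_1}(Z_1(z)) - G_{\mu_1}(Z_2(z))\bigr) + \bigl(G_{\mu_1}(Z_2(z)) - G_{\mu_2}(Z_2(z))\bigr),
\]
the second summand is a difference of Cauchy transforms of $\mu_1$ and $\mu_2$ at a point of imaginary part at least $\Im z$, which can be controlled in terms of $d_L(\mu_1,\mu_2)$ (respectively $d_K(\mu_1,\mu_2)$) via a standard Cauchy-transform smoothing estimate. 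The first summand is controlled by combining the Schwarz--Pick estimate on $G_{\mu_1}$ with a bound on $|Z_1(z) - Z_2(z)|$ obtained from the functional equation (\ref{def}), which one may solve perturbatively since both $Z_i$ are close to $z$ for large $\Im z$ and $Z_1 - Z_2$ depends on $\mu_1, \mu_2$ only through the difference $R_{\mu_1} - R_{\mu_2}$ of the $R$-transforms. Passing $\Im z \downarrow 0$ and invoking the Stieltjes inversion formula (\ref{SIF}) translates the Cauchy-transform bound into the required bound on distribution functions.

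The main obstacle is that free convolution admits no simple integral representation analogous to the classical identity $F_{\mu * \nu}(t) = \int F_\mu(t-s)\,d\nu(s)$, so the one-line classical argument is unavailable. Instead one must control the subordination functions $Z_i$ all the way down to the real axis, where they may attain boundary values inside the support of $\mu_i \conv \nu$; extracting a uniform distribution-function bound from the resulting analytic estimate requires careful handling of the smoothing error. To cover measures without compact support one also has to approximate by compactly supported ones and invoke the weak continuity of $\conv$ (which itself follows from the subordination representation), closing the argument in full generality.
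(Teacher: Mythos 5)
The paper does not prove this theorem at all: it is quoted from Bercovici and Voiculescu \cite{BV93} (``We need the following result by Voiculescu and Bercovici\dots''), so there is no internal proof to compare against, and your attempt must be judged on its own. It has a genuine gap. The triangle-inequality reduction to the one-sided bound $d_L(\mu_1\conv\nu,\mu_2\conv\nu)\le d_L(\mu_1,\mu_2)$ is fine, but the analytic route you then take cannot deliver that bound. The inequality to be proved has constant exactly $1$, whereas every link in your chain --- the Kargin-type estimate on $G_{\mu_1}-G_{\mu_2}$ at height $y$, the perturbative control of $Z_1-Z_2$ (which, for general measures in $\mes$, is not controlled by $d_L(\mu_1,\mu_2)$ near the real axis, since $R$-transforms are only defined on restricted domains and their difference is not dominated by the Levy distance), and the smoothing step back from Cauchy transforms to distribution functions --- loses multiplicative constants and powers of $y$. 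Worse, the final step is simply false for the Kolmogorov distance: closeness of Cauchy transforms on every horizontal line does not control $d_K$ without a density bound on one of the measures. Take $\mu_1=\delta_0$ and $\mu_2=\delta_\ve$: then $|G_{\mu_1}(z)-G_{\mu_2}(z)|\le \ve\,(\Im z)^{-2}$ everywhere on $\compl^+$, yet $d_K(\mu_1,\mu_2)=1$. Since $\mu_i\conv\nu$ may have atoms or an unbounded density, Stieltjes inversion cannot convert your estimate into a sup-norm bound on distribution functions; for $d_L$ a smoothing lemma does recover something, but only up to absolute constants, never the exact subadditivity claimed.

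The actual proof is operator-algebraic, not analytic. One realizes $\mu_1,\mu_2$ as distributions of self-adjoint elements $X_i=f_i(T)$ of a single tracial $W^*$-probability space via quantile functions of one element $T$, so that the coupling witnesses $d_L(\mu_1,\mu_2)$ (respectively $d_K$), and takes $Z$ with distribution $\nu$ free from $W^*(T)$; freeness enters only to identify the distribution of $X_i+Z$ as $\mu_i\conv\nu$. The inequality itself then follows from an exact, freeness-independent fact about spectral distribution functions in a finite von Neumann algebra, namely the consequence $F_{A+B}(s+t)\ge F_A(s)+F_B(t)-1$ of the projection inequality $\tau(p\wedge q)\ge\tau(p)+\tau(q)-1$, applied to $A=X_1-X_2$ and $B=X_2+Z$. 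No subordination functions, no boundary behaviour on the real axis, and no inversion of Cauchy-transform estimates are needed --- which is precisely why the constant comes out as $1$. If you want to salvage your approach, you would at best prove a weaker statement with an absolute constant in front, which would still suffice for every application of this theorem made in the paper (Lemma \ref{ocenka_levy} and Subsection 6.1), but it would not be the theorem as stated.
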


The Berry--Esseen type inequality in  free probability was proved by Chistyakov and G\"otze  %\cite{CHG08}, 
\cite{ChG11v2}.   Assume $\mu$ has zero mean, unit variance and finite  third absolute moment $\beta_3$, 
then there exists an absolute  constant $c >0$ such that
\begin{equation}
\label{b-e}
d_K ( \omega, \mu_n) \leq   \frac {c \beta_3}{\sqrt n}, \q n\in \nat.
\end{equation}

%---------------------------------------------------

%\subsection{Newton-Kantorovich Theorem}
%We also need the following result:
%\begin{theorem}[Newton-Kantorovich, \cite{Kant96}]
%\label{NK}
%Consider vector spaces $X$, $Y$ over $\compl$ and a functional equation $F(t) =0$, where $F: X \to Y$.
%Assume that the conditions hold:
%\begin{enumerate}
%\item
%$F$ is differentiable at $t^0 \in X$,
%$\|F'(t^0)^{-1}\|_Y \leq \beta_0.$
%\item
%$t_0$ solves approximately $F(t)= 0$ with  estimate
%$\|F'(t^0)^{-1} F(t^0)\|_Y \leq \eta_0.$
%\item
%$F''(t)$ is bounded in $B_0$ (see below):
%$\|F''(t)\|_Y \leq K_0.$
%\item
%$\beta_0$, $\eta_0$, $K_0$ satisfy the inequality
%$h_0 = \beta_0 \eta_0 K_0 \leq \frac 1 2.$
%\end{enumerate}
%Then there is the unique root $t^*$ of $F$ in 
%$B_0: = \{t\in  X:\|t - t^0\|_X \leq \frac {1 - \sqrt{1 - 2h_0}}{h_0} \eta_0\}.$
%\end{theorem}

\section{A general scheme for asymptotic expansions}

We denote a vector $(\ve_1,\dots,\ve_n) \in \real^n$ by $\unve_n$.
Let us consider a sequence of functions $h_n(\unve_n;t)$, where
$|\ve_j| \leq n^{-1/2}$, $j = 1,\dots,n$ and 
$t \in A \subseteq \real$ (or $\compl$).
Assume that this sequence of functions
satisfies the following conditions:
\begin{equation}
\label{sym}
h_n(\unve_n;t)\ \mbox{is  symmetric  in  all }\ve_j;
\end{equation}
the sequence $h_n$ is compatible, which means 
\begin{eqnarray}
\label{comp}
\lefteqn{
h_{n+1}(\varepsilon_1,\dots,\varepsilon_{j-1},0,\varepsilon_{j+1},\dots\varepsilon_{n+1};t) 
=h_n(\varepsilon_1,\dots,\varepsilon_{j-1},\varepsilon_{j+1},\dots,\varepsilon_{n+1};t),}\nonumber \\
&&\q\q\q\q\q\q\q\q\q\q\q\q\q\q\q\q\q\q\q\q j=1,\dots,n+1;\q\q
\end{eqnarray}
and all  first derivatives vanish at zero:
\begin{equation}
\label{fd}
\frac{\partial}{\partial \varepsilon_j}h_n(\unve_n;t)\Big |_{\varepsilon_j=0} = 0,  \quad j=1,\dots,n.
\end{equation}

Let us denote by  $\set$ $(m \geq n > s \geq 3)$ the set of weight vectors 
$\unve_{m+s}$
where all but $2s$ components are equal to $m^{-1/2}$ and the remaining $2s$ components are bounded by $n^{-1/2}$. 
Let $\alpha=(\alpha_1,\dots,\alpha_m)$ denote an $m$-dimensional multi-index.
%and set 
%$
%D^{\alpha}= 
%\frac{\partial ^{\alpha_1}}{\partial \varepsilon_1 ^{\alpha_1}}\dots
%\frac{\partial ^{\alpha_m}}{\partial \varepsilon_m ^{\alpha_m}}
%$. 
Finally, we define 
$$
d^s_r(h,n):=\sup\{|D^\alpha h_{m+s}(\unve_{m+s};t)|:|\alpha|=r,\ 
t \in A,\ \unve_{m+s}\in \set,\ m\geq n \}.
$$
The following proposition from \cite{G85} shows that the limit
$$
h_\infty(\unve_s;t) := \lim_{m \rightarrow \infty} h_{m+s}( m^{-1/2},\dots, m^{-1/2},\unve_s;t), 
\ \  |\ve_j| \leq n^{-1/2},\  j=1,\dots, s
$$
exists.

\begin{proposition}
\label{pr}
Assume $h_{m+s}(\unve_{m+s};t)$, $\unve_{m+s} \in \set$, $m\geq n \geq s\geq 3$, $t \in A$ satisfies conditions $(\ref{sym})-(\ref{fd})$ and the condition
$d^s_3(h,n)<\infty.$
Then   limit
$h_\infty(\unve_s;t)$, $|\ve_j| \leq n^{-1/2}$, $j=1,\dots,s$ exists and the following estimate holds:
$$
|h_{n+s}(n^{-1/2},\dots,n^{-1/2}, \unve_s;t) - h_\infty(\unve_s;t)| \leq cd^s_3(h,n) n^{-1/2},
$$
where $c$ is an absolute constant.
\end{proposition}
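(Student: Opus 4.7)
The plan is to establish the existence of $h_\infty$ and the quantitative bound simultaneously by proving that $\{a_m\}_{m\geq n}$, where $a_m:=h_{m+s}(m^{-1/2},\ldots,m^{-1/2},\unve_s;t)$, is Cauchy with the single-step estimate
$$
|a_{m+1}-a_m|\;\leq\; c\,d^s_3(h,n)\,m^{-3/2}.
$$
Telescoping then gives $|a_n-\lim_m a_m|\leq \sum_{m\geq n}|a_{m+1}-a_m|\leq c'\,d^s_3(h,n)\,n^{-1/2}$, yielding both convergence to some $h_\infty(\unve_s;t)$ and the claimed bound.

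By compatibility (\ref{comp}) one has $a_m=h_{m+1+s}(m^{-1/2},\ldots,m^{-1/2},0,\unve_s;t)$, so $a_{m+1}-a_m=G(w)-G(v)$ where $G(x):=h_{m+1+s}(x,\unve_s;t)$ is symmetric in $x\in\real^{m+1}$ by (\ref{sym}), and
$$
v=(m^{-1/2},\ldots,m^{-1/2},0),\qquad w=((m{+}1)^{-1/2},\ldots,(m{+}1)^{-1/2}).
$$
Two features are crucial: $\|v\|_2^2=\|w\|_2^2=1$, and all coordinates of both points are bounded by $m^{-1/2}\leq n^{-1/2}$, so third partials along the segment are controlled by $d^s_3(h,n)$. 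Applying Taylor's formula to second order with integral remainder and exploiting (\ref{fd}) iteratively---in the form that if an index $p$ appears with multiplicity exactly one in a multi-index $\alpha$ then $\partial^\alpha G$ vanishes on $\{x_p=0\}$---one expands
$$
\partial_i G(v)=v_iA+O(v_i^2 d^s_3)\ (i\leq m),\qquad \partial_{ij}G(v)=O(v_i d^s_3)\ (i\neq j),
$$
where $A:=\partial_{11}G(0,m^{-1/2},\ldots,m^{-1/2},0)$ is the common value for $i\leq m$ by symmetry. The mixed second partials, the Taylor-remainder in $\partial_iG(v)$, and the third-order Taylor remainder together contribute $O(d^s_3\,m^{-3/2})$. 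The surviving diagonal part, after the algebraic identity $v_i(w_i-v_i)+\tfrac12(w_i-v_i)^2=\tfrac12(w_i^2-v_i^2)$, reduces to
$$
A\cdot\tfrac12\sum_{i\leq m}(w_i^2-v_i^2)\;+\;\tfrac12 w_{m+1}^2\, A_{m+1}\;=\;\frac{A_{m+1}-A}{2(m+1)},
$$
where $A_{m+1}:=\partial_{m+1,m+1}G(v)=\partial_{11}G(0,m^{-1/2},\ldots,m^{-1/2})$ by symmetry.

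The main obstacle is therefore to control $A_{m+1}-A$, which records the change in a second partial as a single coordinate moves from $0$ to $m^{-1/2}$. I would write
$$
A_{m+1}-A \;=\; \int_0^{m^{-1/2}}\partial_{11,m+1}G(0,m^{-1/2},\ldots,m^{-1/2},u)\,du
$$
and bound the integrand by $d^s_3(h,n)$ on $[0,m^{-1/2}]$; the path stays among weight vectors with coordinates $\leq n^{-1/2}$, so the defining supremum of $d^s_3$ applies. This yields $|A_{m+1}-A|\leq d^s_3(h,n)\,m^{-1/2}$ and hence $(A_{m+1}-A)/(2(m+1))=O(d^s_3\,m^{-3/2})$. (A consistency check: by (\ref{fd}) the integrand additionally vanishes at $u=0$, since the index $m+1$ occurs only once in $(1,1,m+1)$.) Assembling the pieces gives the single-step estimate and, by telescoping, the proposition.
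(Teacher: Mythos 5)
Your proof is correct and follows essentially the same route as the paper's: a single-step bound $|a_{m+1}-a_m|=O(d_3^s(h,n)\,m^{-3/2})$ obtained from a second-order Taylor expansion whose first-order and mixed terms are killed by condition (\ref{fd}), whose diagonal terms collapse via the ``equality of variances'' $\|v\|^2=\|w\|^2$ to the difference of two second partials at points differing in one coordinate by $m^{-1/2}$ (your $A_{m+1}-A$; the paper's step (\ref{zmz})), followed by telescoping. The only difference is organizational — you package the paper's term-by-term derivative expansions into one Taylor formula and bound $A_{m+1}-A$ by an explicit integral of a third derivative — which does not change the substance of the argument.
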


%--------------------------------------------

We formulate an Edgeworth type expansion  for $h_n(n^{-1/2},\dots,n^{-1/2};t)$
in terms of derivatives of 
$h_{\infty}(\unve_s;t)$ with respect to $\ve_j$, $j=1,\dots,s$ at $\unve_s = 0$.
Below we introduce all necessary  notations.

We  establish ``cumulant"  differential operators $\kappa_p(D)$ via the
formal identity
\begin{equation}
\label{ln}
\sum_{p=2}^{\infty}p!^{-1}\varepsilon^{p}\kappa_p(D)=
\ln\left(1+\sum_{p=2}^{\infty}p!^{-1}\varepsilon ^p D^p\right).
\end{equation}
Expanding in formal power series in the formal variable $\varepsilon$ on the right-hand side of this identity we obtain the definition of the cumulant operators $\kappa_p(D)$.  
Here $D^p$ denotes $p$-fold differentiation with respect to a single variable $\varepsilon$, and $D^{p_1}\cdots D^{p_r} = D^{(p_1,\dots,p_r)}$ denotes differentiation with respect to $r$ different variables 
$\varepsilon_1,\dots,\varepsilon_r$ at the point $\unve_r = 0$.
Since the operators are applied to symmetric functions at zero, 
$\kappa_p(D)$ is unambiguously defined by $(\ref{ln})$.
The first cumulant operators  are $\kappa_2(D) = D^2,\ \kappa_3(D) = D^3,\ \kappa_4(D) = D^4 - 3D^2D^2,$ etc.

Then, we define Edgeworth polynomial operators $P_r(\kappa_.(D))$ by means of the following formal series in $\kappa_r$ and a formal variable $\varepsilon$.
\begin{eqnarray}
\label{134}
\sum_{r=0}^\infty \varepsilon^r P_r({\kappa_.}) = 
\exp \left( \sum_{r=3}^\infty r!^{-1} \varepsilon^{r-2}\kappa_r\right)
\end{eqnarray}
which yields
\begin{eqnarray}
\label{ppp}
P_r({\kappa_.}) = \sum_{m=1}^r m!^{-1} \left\{\sum _{(j_1,\dots j_m)} (j_1 + 2)!^{-1}\kappa_{j_1+2}\cdots 
 (j_m + 2)!^{-1}\kappa_{j_m+2}\right\},
\end{eqnarray}
where the sum $\sum _{(j_1,\dots, j_m)}$ means summation over all $m$-tuples of positive integers  $(j_1,\dots, j_m)$
satisfying $\sum_{q=1}^m j_q=r$ and ${\kappa_.}=(\kappa_3,\dots,\kappa_{r+2})$.
Replacing the variables $\kappa_.$ in $P_r(\cdot)$ by the differential operators 
$$\kappa_.(D): = (\kappa_3(D),\dots,\kappa_{r+2}(D))$$
 we obtain ``Edgeworth" differential operators, say 
$P_r(\kappa_.(D))$.
The following theorem yields an asymptotic expansion for 
$h_n(n^{-1/2}, \dots, n^{-1/2};t)$ (for more details see \cite{G85}).

\begin{theorem}
\label{th}
Assume that $h_{m+s}(\unve_{m+s};t)$, $\unve_{m+s} \in \set$, $m\geq n \geq s \geq 3$, $t \in A$ fulfils conditions $(\ref{sym})-(\ref{fd})$  together with 
\begin{equation}
\label{der1}
d_s^s(h,n) \leq B,
\end{equation}
\begin{equation}
\label{der2}
\sup_{t \in A}\sup_{\unve_{m+s} \in \set}\left|D^{\alpha}h_{m+s}(\unve_{m+s};t)\right| \leq B, 
\end{equation}
where $\alpha  = (\alpha_1,\dots, \alpha_{s-2})$ 
such that 
$$\alpha_i \geq 2,\ \  i=1,\dots, s - 2,\ \  \sum_{i=1} ^{s-2}(\alpha_i - 2) \leq s - 2.$$
Then 
\begin{eqnarray}
\label{expansiondf}
%\lefteqn{
\left| h_n(n^{-1/2}, \dots, n^{-1/2};t) - \sum_{r=0}^{s-3} n^{-r/2}P_r(\kappa_.(D))h_\infty(\unve_r;t)\Big|_{{\unve_r}=0}\right| 
 \leq c_s B n^{-(s-2)/2},
\end{eqnarray}
where $P_0(\kappa_.(D)) = 1$ and  $P_r(\kappa_.(D))$ are given explicitly in $(\ref{ppp})$, 
$c_s$ is an absolute constant.
\end{theorem}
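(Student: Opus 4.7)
The plan is to reduce the theorem to a purely algebraic Taylor-expansion identity: with $n$ bulk weights set to $n^{-1/2}$ and vanishing first derivatives, the multivariate Taylor sum reorganizes, after binomial asymptotics and the formal logarithm relation (\ref{ln}), into precisely the Edgeworth polynomial expansion on the right-hand side of (\ref{expansiondf}).

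First I would use compatibility (\ref{comp}) to pad $h_n(n^{-1/2},\dots,n^{-1/2};t)= h_{n+s}(n^{-1/2},\dots,n^{-1/2},0,\dots,0;t)$, appending $s$ zeros that will later play the role of $\unve_s$. Then Taylor expand the right-hand side in the first $n$ variables around $\underline{0}_n$ to total order $s-1$ with Lagrange remainder,
\begin{equation*}
h_{n+s}(n^{-1/2},\dots,n^{-1/2},\unve_s;t)=\sum_{|\alpha|\le s-1}\frac{n^{-|\alpha|/2}}{\alpha!}\,D^{\alpha}h_{n+s}(\underline{0}_n,\unve_s;t)+R_{s},
\end{equation*}
with $\alpha\in\nat^{n}$. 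By (\ref{fd}) any term with some $\alpha_i=1$ vanishes, so surviving multi-indices have each nonzero $\alpha_i\ge 2$. By symmetry (\ref{sym}) the terms of identical multi-type $(p_1,\dots,p_m)$ with $p_j\ge 2$ aggregate with combinatorial weight $\binom{n}{m}\cdot m!/(n_2!n_3!\cdots)$, where $n_q$ counts how often $q$ occurs in the type. Since $\binom{n}{m}=n^{m}/m!+O(n^{m-1})$, each type contributes at leading order $n^{-r/2}$ with $r=\sum_j(p_j-2)$.

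The combinatorial heart of the argument, and the main obstacle, is to identify the resulting sum
\begin{equation*}
\sum_{r=0}^{s-3}n^{-r/2}\sum_{m\ge 0}\frac{1}{m!}\sum_{\substack{p_1,\dots,p_m\ge 2\\ \sum(p_j-2)=r}}\prod_{j=1}^m\frac{1}{p_j!}\,D^{(p_1,\dots,p_m)}h_{n+s}(\underline{0}_n,\unve_s;t)\Big|_{\unve_s=0}
\end{equation*}
with $\sum_{r=0}^{s-3}n^{-r/2}P_r(\kappa_.(D))h_{\infty}(\unve_r;t)|_{\unve_r=0}$. This matches the generating-function identities (\ref{ln})--(\ref{134}): the vanishing of first derivatives removes the $p=1$ contribution, the leading $\binom nm\to n^m/m!$ accounts for $\exp(\cdot)/m!$, and the subtraction of "disconnected" contributions encoded in operators such as $\kappa_4(D)=D^4-3D^2D^2$ is exactly what is produced by the $O(n^{m-1})$ correction to $\binom nm$ when one passes from multinomial partial derivatives of $h_{n+s}$ to the "cumulant" operators $\kappa_p(D)$ via the formal logarithm. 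Checking this reduces to matching exponential generating functions coefficient by coefficient, as carried out in detail in [G85].

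Finally I would replace each $D^{\alpha}h_{n+s}(\underline{0}_n,\unve_s;t)|_{\unve_s=0}$ appearing in the order-$r$ term by $D^{\alpha}h_{\infty}(\unve_r;t)|_{\unve_r=0}$ via Proposition \ref{pr} applied to the differentiated sequence, whose hypotheses are supplied by (\ref{der1})--(\ref{der2}); each such replacement costs $O(Bn^{-1/2})$, which combined with the $n^{-r/2}$ prefactor remains within the target $Bn^{-(s-2)/2}$. The Taylor remainder $R_{s}$ is controlled by counting multi-indices with $|\alpha|=s$ and every nonzero $\alpha_i\ge 2$: at most $\lfloor s/2\rfloor$ nonzero slots give $O(n^{\lfloor s/2\rfloor})$ such indices, which combined with $n^{-s/2}$ and the uniform bound (\ref{der1}) again yields $O(Bn^{-(s-2)/2})$. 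The analytic bookkeeping here is routine; the actual difficulty is the algebraic matching of the symmetric multivariate Taylor sum with $\sum_{r}n^{-r/2}P_r(\kappa_.(D))$, which is the technical core inherited from [G85].
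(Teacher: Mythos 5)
Your plan has a genuine structural flaw: a single multivariate Taylor expansion of $h_{n+s}$ around the origin in the $n$ bulk variables cannot recover $h_n(n^{-1/2},\dots,n^{-1/2};t)$ up to a small error, no matter the order of truncation. The terms of multi-type $(2,2,\dots,2)$ occupying $m$ slots contribute $\binom{n}{m}2^{-m}n^{-m}D^{(2,\dots,2)}h(\underline{0})\approx \frac{1}{m!\,2^{m}}D^{(2,\dots,2)}h(\underline{0})$, which is $O(1)$ for \emph{every} $m$ up to order $n$; truncating the Taylor series at total order $s-1$ therefore discards $O(1)$ contributions, so the remainder is not $O(n^{-(s-2)/2})$. (Your own displayed main-term sum tacitly acknowledges this by letting $m$ range over all of $\mathbb{N}$, which is inconsistent with a finite-order Taylor theorem.) The Lagrange remainder estimate is also incorrect: condition (\ref{fd}) kills first derivatives only \emph{at zero}, whereas the remainder involves derivatives at an interior point $\theta\cdot(n^{-1/2},\dots,n^{-1/2})$, so multi-indices with some $\alpha_i=1$ do not drop out; their count is of order $n^{s}$, and with the weight $n^{-s/2}$ the naive bound diverges like $n^{s/2}$. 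Finally, identifying $\sum_m \frac{1}{m!}\sum_{(p_j)}\prod(p_j!)^{-1}D^{(p_1,\dots,p_m)}h(\underline{0})$ with $P_r(\kappa_.(D))h_\infty$ is not a formal generating-function match: the left side involves derivatives of the finite-$n$ functions at the origin, while $h_\infty$ has the infinite bulk already built in, and passing from one to the other is precisely the analytic content that has to be proved.

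The paper avoids all of this by a Lindeberg-type telescoping in the number of bulk variables: it compares $h_{m+1}(\unve_m;\cdot)$ with $h_{m+1}(\unve_{m+1};\cdot)$, where the increments $m^{-1/2}-(m+1)^{-1/2}=O(m^{-3/2})$ are genuinely small, uses the ``equality of variances'' identity $\sum_{k}\Delta^2_{m,k}=0$ to cancel the second-order terms, re-centers derivatives via Lemma \ref{lemma}, and then sums the telescope over $m\ge n$ with an induction on the length of the expansion, anchoring everything at $h_\infty$ rather than at $h(\underline{0})$. If you want to salvage your approach you would have to replace the one-shot expansion around the origin by exactly this kind of incremental swapping; as written, the argument does not close.
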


The first four terms of the expansion (\ref{expansiondf})  are
\begin{eqnarray}
\label{expan_2}
\lefteqn{
h_n(n^{-1/2}, \dots, n^{-1/2};t)} \\
&\!\!=&h_\infty(0;t) + \frac 1 {n^{1/2}}\left(\frac{1}{6}
\frac{\partial^3}{\partial \ve_1^3}\right)h_\infty(\ve_1;t)\Big|_{\ve_1=0} \nonumber
\\ 
&\!\!+&\frac{1}{n} \left ( \frac{1}{24}\left(\frac{\partial^4}{\partial\varepsilon_1^4} - 3  \frac{\partial^2}
{\partial\varepsilon_1^2}\frac{\partial^2}{\partial\varepsilon_2^2}\right) + \frac{1}{72} \frac{\partial^3}
{\partial\varepsilon_1^3}\frac{\partial^3}{\partial\varepsilon_2^3}\right)h_\infty(\unve_2;t)\Big|_{\unve_2=0}
 \nonumber
 \\ 
&\!\!+&\frac{1}{48n^{3/2} } \left( \frac {1}{5} \left( \frac{\partial^5}{\partial\varepsilon_1^5} - 10\frac{\partial^3}{\partial\varepsilon_1^3}\frac{\partial^2}{\partial\varepsilon_2^2}\right) \right.  \nonumber
\\
&\!\!+& \frac {1}{3}\left(\frac{\partial^4}{\partial\varepsilon_1^4} - 3 \frac{\partial^2}{\partial\varepsilon_1^2} 
\frac{\partial^2}{\partial\varepsilon_2^2} \right) \!\frac{\partial^3}{\partial\varepsilon_3^3}
+ \left. \frac {1} {27} \frac{\partial^3}{\partial\varepsilon_1^3}\frac{\partial^3}{\partial\varepsilon_2^3} \frac{\partial^3}{\partial\varepsilon_3^3}\right) h_\infty (\unve_3;t) \Big|_{\unve_3=0} \!
+O\left(\frac{1}{n^2}\right)\!. \nonumber
\end{eqnarray}

\begin{remark}
\label{remark}
Conditions (\ref{der1}) and (\ref{der2}) guarantee that the functions 
$$
g^\alpha_{m+s}(\unve_{m+s};t): = D^\alpha h_{m+s}(\unve_{m+s};t),
$$
for $ \alpha  = (\alpha_1,\dots, \alpha_r)$, where  
$ r \leq  s - 3,\  s\geq3$ 
$$\alpha_i \geq 2,\q i=1,\dots, r,\q  \sum_{i=1} ^{r}(\alpha_i - 2) =s-3$$
satisfy the conditions of Proposition \ref{pr}.
In particular, due to Proposition \ref{pr}  for every $\alpha$ the functions
$g^\alpha_{m+s}(\unve_{m+s};t)$ converge to $g^{\alpha}_{s}(\unve_{s};t)$ as $m\to \infty$ uniformly in $\unve_{s}$, $|\ve_j| \leq n^{-1/2}$, $n \geq 1$, $j=1,\dots,s$  and due to Theorem \ref{uniform} (see Appendix) we conclude that $g^{\alpha}_{s}(\unve_{s};t) = D^{\alpha}h_\infty(\unve_s;t)$, $r \leq s-3$.
\end{remark}

%------------------------------------------------------

%-----------------------------------------------------

%\newpage
\section{Proofs of results}
%\section {Application of the general scheme for asymptotic expansions}

\subsection{Truncation}
We assume that $\mu \in \mes$ satisfies $\beta_9 < \infty$.
Consider free random variables $\hat X_1,  \hat X_2, \dots$ with distribution $\hat \mu \in \mes$ such that 
$\hat\mu([-n^{1/3}, n^{1/3}]) = 1$ and $\hat \mu (B) = \mu(B)$ for all Borel sets 
$B \subseteq [- n^{1/3},  n^{1/3}] \backslash \{0\}$.
Denote by $\hat \mu_n$ the distribution of the random variable 
$\hat Y_n : = (\hat X_1 + \dots + \hat X_n)/\sqrt n$.
We denote by $\hat m_k$ and $\hat \beta_k$, $k=1,2,\dots$ moments and absolute moments of  $\hat \mu$.
Let us also introduce the centered random variables 
\begin{eqnarray*}
X^*_1: = \frac {\hat X_1 - a_n}{b_n}, \q X^*_2 := \frac {\hat X_2 - a_n}{b_n},\q\dots\q \mbox{and} \q
Y^*_n: = \frac 1 {\sqrt n} \sum_{j=1}^n X^*_j,
\end{eqnarray*}
where
\begin{eqnarray*}
a_n := \hat m_1,\q 
b^2_n := \int (x - a_n)^2\hat \mu(dx) =  1 - \int_{A_n} x^2\mu(dx) - a_n^{2}, \ \ A_n: = \{|x| > n^{1/3}\}.
\end{eqnarray*}
Denote by $\mu^*$ and $\mu_n^*$ the distributions of the random variables $X_1^*$ and $Y_n^*$ respectively,
and by $m_k^*$ and  $\beta_k^*$, $k=1,2,\dots$ moments and  absolute moments of  $\mu^*$.
Note that $m_1^* = 0$ and $m_2^* = 1$.
Due to the assumption  $\beta_9 < \infty$ 
we have
\begin{eqnarray*}
|a_n| \leq n^{-8/3}\int_{A_n}|x|^9\mu(dx) \leq  \beta_9 n^{-8/3}
\end{eqnarray*}
and
\begin{eqnarray*}
%0 \leq \frac 1 {b_n}  \leq 1 +  \frac {2\beta_9}{n^{7/3}}.
|1 - b_n^2| \leq \beta_9^2 n^{-16/3} + \beta_9 n^{-8/3}.
\end{eqnarray*} 
By above inequalities we obtain
\begin{eqnarray}
\label{supp}
b_n^{-1}(n^{1/3} + |a_n|) \leq  n^{1/3}.
\end{eqnarray}
By (\ref{supp}) we conclude that  the support of $\mu^*$ is contained in $[- n^{1/3},  n^{1/3}]$.

Furthermore, we deduce that 
\begin{eqnarray*}
\beta^*_3 \leq b_n^{-3}\left(\hat \beta_3 + 3|a_n|\hat \beta_2+3|a_n|^2\hat \beta_1 + |a_n|^3\right)
\leq b_n^{-3}\hat \beta_3 +  4 \beta_9 n^{-8/3}.
\end{eqnarray*}

%We need to introduce the Kolmogorov (or uniform) distance between measures $\mu$ and $\lambda$, which is defined by the formula
%\begin{eqnarray*}
%d_K(\mu,\lambda) = \sup_{x \in \real}\{|\mu((-\infty,x)) - \lambda((-\infty,x))|\}.
%\end{eqnarray*}

Let $\hat \omega$ be a semi-circle distribution with mean $\sqrt n a_n$ and variance $b^2_n$.
By the triangle inequality we get
\begin{eqnarray}
\label{triangl}
d_K(\mu_n,\omega) \leq d_K (\mu_n , \hat \mu_n) + d_K ( \hat \mu_n, \hat \omega) + 
d_K(\hat \omega, \omega).
\end{eqnarray}
By Theorem \ref{levy_dist_in}
the first term in the right hand side has a bound
\begin{eqnarray}
\label{ineq1}
 d_K (\mu_n , \hat \mu_n) \leq n \mu (\{|x| > \sqrt [3] n\} \leq \frac  {\beta_9} {n^{2}}.
\end{eqnarray}
We find the an estimate for the last term in (\ref{triangl})
\begin{eqnarray}
\label{ineq2}
d_K (\hat \omega, \omega) \leq c \left( \frac 1 {b_n} + \frac {\sqrt n a_n} {b_n} - 1\right)
\leq \frac c { n^2}.
\end{eqnarray}
Note, that we have an equality
\be*
d_K (\hat \mu_n, \hat\omega) = d_K (\mu_n^*, \omega).
\ee*
Our next aim is to apply the general asymptotic scheme for expansion to $\mu^*_n$.

\subsection {Application of the general scheme for asymptotic expansions}

Below we apply the general scheme  to compute the asymptotic expansions for $p_{\mu^*_n}$ and $\mu_n$.
%Assume that   $\mu \in \mes$ is  compactly supported  with zero mean and unit variance.
We set $h_n(n^{-1/2}, \dots, n^{-1/2};z): = G_{\mu^*_n}(z)$, $z \in K$,  where $G_{\mu^*_n}(z)$ is the extension defined in Corollary \ref{extension_g} (in our case $\mu^* = \mu$).
Let us introduce two notations:
\be*
\muves & := & D_{\ve_1}\mu^* \conv \dots \conv D_{\ve_{m+s}}\mu^*,\q \unve_{m+s}\in \set,\ \ m \geq n,\\
\mu_s^{(\unve_s)} & : = & D_{\ve_1}\mu^* \conv \dots \conv D_{\ve_s}\mu^*,
\q|\ve_j| \leq n^{-1/2},\q j= 1, \dots, s.
\ee*

The following results follow from Theorem \ref {extension_33} and allow for an easy application of the expansion scheme.

\begin{corollary}
\label{lklklk}
For every $\delta \in (0,1/10)$ and $n \geq c (\mu^*,s) \est$ the Cauchy transform 
$G_{\mutilda}$ has an analytic continuation to $K$
such that
\begin{equation}
\label{cont41}
G_{\mutilda}(z) = G_{\omega} (z) +  l _{\unve_s}(z), \ \  z \in K,
\end{equation}
where 
$| l _{\unve_s}(z)| \leq \frac {c(s)}{n \sqrt \delta}$ on $K$.
\end{corollary}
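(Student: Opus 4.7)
The plan is to recognize $\mutilda = \omega \boxplus \mu_s^{(\unve_s)}$ as a free convolution of the semicircle law with a measure $\mu_s^{(\unve_s)}$ whose variance is at most $s/n$, and then invoke Theorem \ref{extension_33} (already used in establishing Corollary \ref{extension_g}) with $\omega$ playing the role of the dominant summand. The statement reduces to a Banach fixed-point argument on $K$ for the subordination equation associated with $\omega$.

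\textbf{Step 1: estimates for $R_{\mu_s^{(\unve_s)}}$.} By the dilation identity (\ref{dilation}) and additivity of the $R$-transform under free convolution (\ref{r-tr}),
\[
R_{\mu_s^{(\unve_s)}}(w) = \sum_{j=1}^{s} \ve_j\, R_{\mu^*}(\ve_j w)
= \Big(\sum_{j=1}^{s} \ve_j^2\Big) w + \sum_{k \geq 2} \kappa_{k+1}^*\Big(\sum_{j=1}^{s} \ve_j^{k+1}\Big) w^k,
\]
since $\kappa_1^* = 0$ and $\kappa_2^* = 1$. Using $|\ve_j|\le n^{-1/2}$, the leading variance term is at most $s/n$, and the cumulant bounds (\ref{cum}) for the compactly supported $\mu^*$ (with $L\le n^{1/3}$) show that on any compact set $|w|\le R_0$ the remainder is bounded by $c(s)/n^{3/2}$. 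In particular, on a neighborhood of $G_\omega(K)$,
\[
|R_{\mu_s^{(\unve_s)}}(w)| \leq \frac{c(s)}{n},\qquad |R'_{\mu_s^{(\unve_s)}}(w)| \leq \frac{c(s)}{n}.
\]

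\textbf{Step 2: subordination equation.} Since $R_\omega(z)=z$, the subordination description (\ref{def})–(\ref{ddeeff}), combined with $G_\omega$ satisfying $G_\omega(z)+F_\omega(z)=z$ from (\ref{func}), reduces to the scalar fixed-point equation
\[
w = \Phi_z(w) := G_\omega\bigl(z - R_{\mu_s^{(\unve_s)}}(w)\bigr),
\qquad w = G_{\mutilda}(z).
\]
This relation makes sense as soon as $z-R_{\mu_s^{(\unve_s)}}(w)$ lies in the domain of the analytic extension of $G_\omega$, which by the explicit formula (\ref{semiext}) is the full Riemann surface outside the cut $[-2,2]$; on the rectangle $K$ one has, moreover, the quantitative lower bound $|1-G_\omega(z)^2| \geq c\sqrt{\delta}$ and derivative bound $|G'_\omega(z)| \le c/\sqrt{\delta}$.

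\textbf{Step 3: contraction argument on $K$.} On $K$, take the closed disk $B_z := \{w : |w - G_\omega(z)| \le 2c(s)/(n\sqrt{\delta})\}$ centered at $G_\omega(z)$. Using Step 1 and the derivative bound on $G_\omega$ from Step 2, for $n \geq c(\mu^*,s)\est$ the map $\Phi_z$ sends $B_z$ into itself and is a strict contraction there, with Lipschitz constant $\le c(s)/(n\sqrt{\delta}) \cdot c/\sqrt{\delta} < \tfrac12$. By the Banach fixed point theorem, $\Phi_z$ has a unique fixed point $w(z)$ in $B_z$, and by analytic dependence on $z$, the map $z \mapsto w(z)$ is analytic on $K$.

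\textbf{Step 4: identification and conclusion.} For $z \in K \cap \compl^+$, the true Cauchy transform $G_{\mutilda}(z)$ satisfies the same fixed-point equation, and by standard arguments (continuity of $Z_1$ up to the boundary from Theorem \ref{Belinschi}, and uniqueness of the subordination) it lies in $B_z$. Hence $w(z)$ agrees with $G_{\mutilda}(z)$ on $K \cap \compl^+$ and provides the sought analytic extension to all of $K$. The fixed-point location gives
\[
|l_{\unve_s}(z)| = |w(z) - G_\omega(z)| \leq \frac{c(s)}{n\sqrt{\delta}},\qquad z \in K.
\]

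\textbf{Main obstacle.} The delicate point is the quantitative lower bound $|1-G_\omega(z)^2|\gtrsim\sqrt{\delta}$ on $K$ (with $K$ extending a distance $\delta\sqrt{\delta}$ off the real axis), which controls both the derivative of $G_\omega$ entering the contraction constant and the radius of the disk on which $\Phi_z$ is a self-map. This is precisely why the threshold on $n$ takes the form $c(\mu^*,s)\est$: the factor $\delta^{-6}$ absorbs one $\delta^{-1/2}$ from the Lipschitz bound, one from the radius, and additional losses that arise from the remainder estimate in Step 1. This bookkeeping is encapsulated in Theorem \ref{extension_33}, and the present corollary is simply the specialization to $\mu_1=\omega$, $\mu_2=\mu_s^{(\unve_s)}$.
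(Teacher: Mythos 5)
Your core computation is sound and, after the change of variables $w=G_\omega(\zeta)$, your fixed-point equation $w=G_\omega\bigl(z-R_{\mu_s^{(\unve_s)}}(w)\bigr)$ is exactly the paper's equation $f(\zeta)=\zeta+R_{\mu_s^{(\unve_s)}}(G_\omega(\zeta))=z$ from the proof of Theorem \ref{extension_33} (with the term $g_1$, coming from approximating $\omega$ by the $m$-fold convolution, set to zero --- which is literally all the paper's one-line proof of this corollary does). So Steps 1--3 are the same perturbative argument, with Banach's fixed point theorem replacing Rouch\'e's theorem, and the two quantitative inputs agree: $|R_{\mu_s^{(\unve_s)}}|\leq c(s)/n$ on a neighbourhood of $G_\omega(K)$ and $|G_\omega'|\leq c/\sqrt\delta$ on $K$. (Minor point: in Step 1 the bound $c(s)/n^{3/2}$ on the $k\geq 2$ tail does not follow from (\ref{cum}) alone --- with $L=n^{1/3}$ that bound gives only $O(n^{-1/2})$ for $k=2$ --- you need the moment bounds $|\kappa^*_{k+1}|\leq c(\mu)$ for $k\leq 7$ and reserve (\ref{cum}) for $k\geq 8$, which is the split the paper makes and the reason $\beta_9<\infty$ is assumed.)

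The genuine gap is Step 4. For $z\in K\cap\compl^+$ with $\Im z$ close to $0$ you have no a priori control on $G_{\mutilda}(z)$ beyond $|G_{\mutilda}(z)|\leq 1/\Im z$; in particular you do not know that $G_{\mutilda}(z)$ lies in the radius of convergence of $R_{\mu_s^{(\unve_s)}}$, so you cannot even assert that it solves your fixed-point equation there, let alone that it falls into the tiny disk $B_z$ where your fixed point is unique. ``Uniqueness of the subordination'' does not supply this. This is precisely where the paper spends its effort: Proposition \ref{bla-bla} (Newton--Kantorovich) and Corollary \ref{an_cont} show beforehand that $G_{\mutilda}$ extends analytically to a real neighbourhood of $[-2+\delta,2-\delta]$ and is uniformly close to $G_\omega$ there, after which the identification of $f^{(-1)}$ with $G_\omega^{(-1)}\circ G_{\mutilda}$ is legitimate. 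Your route can be repaired without that machinery by a connectedness argument: run the contraction on the whole strip $R=\{x+iy:\,x\in[-2+2\delta,2-2\delta],\,y>0\}$, note that $\{z\in R:\,G_{\mutilda}(z)\in B_z\}$ is nonempty (large $\Im z$, where the $R$-transform identity certainly holds), open and closed in $R$, hence all of $R$; this would actually make your proof more self-contained than the paper's for this particular corollary. As written, though, the identification step is asserted rather than proved.
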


\begin{remark}
In (\ref{cont41})  we understand $G_{\omega} (z)$  as an analytic continuation of the corresponding Cauchy transform, which is defined in the following way:
$$G_{\omega}(z) = (z - i \sqrt{4 - z^2})/2,\ \ z \in K.$$
\end{remark}

\begin{corollary}
\label{dfdfdfdfd}
For every $\delta \in (0,1/10)$ and 
$m \geq n \geq c(\mu^*,s) \est$
the Cauchy transform  $G_{\muves}$ has an analytic continuation to $K$ such that
\begin{eqnarray}
G_{\muves}(z) = G_{\mutilda} (z) +  l(z),
\q z \in K,
\end{eqnarray} 
where
$| l(z)| \leq \frac {c(s)} {\sqrt {\delta n}}$
on $ K$.
\end{corollary}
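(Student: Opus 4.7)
The plan is a straightforward triangle-inequality comparison through $G_\omega$. The key step is to obtain, via Theorem \ref{extension_33}, an analytic extension of $G_{\muves}$ to $K$ together with a uniform bound $|G_{\muves}-G_\omega|=O(1/\sqrt{\delta n})$; the claim then follows by subtracting Corollary \ref{lklklk}.

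Since $\muves$ is a free convolution of $m+s$ dilated copies of $\mu^*$ whose individual weights are all bounded by $n^{-1/2}$ and whose total variance differs from $1$ by $O(s/n)$, and since $\mbox{supp}(\mu^*)\subset[-n^{1/3},n^{1/3}]$ with $\beta_9^*<\infty$, Theorem \ref{extension_33} applies to $\muves$. It provides the analytic continuation of $G_{\muves}$ to $K$ and yields
$$|G_{\muves}(z) - G_\omega(z)| \le \frac{c(s)}{\sqrt{\delta n}}, \qquad z \in K.$$
The $1/\sqrt n$-rate is the free Berry--Esseen order (cf.\ (\ref{b-e})), now made uniform over $K$; the factor $1/\sqrt\delta$ reflects the distance of $K$ from the edges $\pm 2$ of the support of $\omega$, equivalently the ``depth'' of the analytic extension into the strip of height $\delta^{3/2}$.

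Combining this with Corollary \ref{lklklk}, which gives $G_{\mutilda}(z)=G_\omega(z)+l_{\unve_s}(z)$ with $|l_{\unve_s}(z)|\le c(s)/(n\sqrt\delta)$ on $K$, and setting $l(z):=G_{\muves}(z)-G_{\mutilda}(z)$, one obtains
$$|l(z)| \le |G_{\muves}(z)-G_\omega(z)| + |l_{\unve_s}(z)| \le \frac{c(s)}{\sqrt{\delta n}} + \frac{c(s)}{n\sqrt\delta} \le \frac{c(s)}{\sqrt{\delta n}}$$
for $n\ge c(\mu^*,s)\est$, which is the claim. The main obstacle is the first step, namely establishing $|G_{\muves}-G_\omega|=O(1/\sqrt{\delta n})$ from Theorem \ref{extension_33} in the ``mixed weights'' regime where $m-s$ of the weights equal $m^{-1/2}$ and the remaining $2s$ are only known to be $\le n^{-1/2}$: the $(m-s)$-fold bulk drives the semicircular convergence at the free CLT rate, while the $2s$ bounded-weight perturbations contribute only a lower-order term of size $O(s/n)$ that is absorbed by the Berry--Esseen bound.
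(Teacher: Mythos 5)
Your proposal is correct and follows essentially the same route as the paper: the paper's proof is literally ``Combining Corollary \ref{lklklk} and Theorem \ref{extension_33} we obtain the statement,'' i.e.\ the triangle inequality through $G_\omega$ using the bounds $|G_{\muves}-G_\omega|\le c(s)/\sqrt{n\delta}$ and $|G_{\mutilda}-G_\omega|\le c(s)/(n\sqrt\delta)$ on $K$. The ``main obstacle'' you flag at the end is not actually an obstacle, since Theorem \ref{extension_33} is stated precisely for the mixed-weights regime $\unve_{m+s}\in\set$ and so applies directly.
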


\begin{corollary}
\label{sym_comp}
For every $\delta \in (0, 1/10)$,   $m \geq n \geq c(\mu^*,s)\est$
the analytic continuation $G_{\muves}(z)$, $z \in K$ is a
symmetric and compatible function of $\ve_j$, $j=1,\dots, m + s$.
\end{corollary}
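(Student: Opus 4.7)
The plan is to verify symmetry and compatibility first at the level of the underlying measures (equivalently, for the Cauchy transforms on $\compl^+$) and then to transfer both properties to the analytic continuation on $K$ via the identity theorem for analytic functions. Both ingredients reduce to routine properties of free convolution once the definitions are unwound; no delicate estimation is needed here, as the quantitative work was already done in Corollary \ref{dfdfdfdfd}.

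For \emph{symmetry}, I would invoke Voiculescu's identity $R_{\mu_1 \conv \mu_2}(z) = R_{\mu_1}(z) + R_{\mu_2}(z)$ (equation (\ref{r-tr})), which together with commutativity and associativity of addition shows that $\conv$ is commutative and associative. Consequently, for any permutation $\sigma$ of $\{1,\dots,m+s\}$,
\[
D_{\ve_1}\mu^* \conv \cdots \conv D_{\ve_{m+s}}\mu^* \;=\; D_{\ve_{\sigma(1)}}\mu^* \conv \cdots \conv D_{\ve_{\sigma(m+s)}}\mu^*,
\]
so $G_{\muves}$ is symmetric in $(\ve_1,\dots,\ve_{m+s})$ on $\compl^+$. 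For \emph{compatibility}, I would observe that $D_0\mu^* = \delta_0$, the unit point mass at the origin (since $D_t\mu$ is the law of $tX$ for $X\sim\mu$). Because $R_{\delta_0}\equiv 0$, the measure $\delta_0$ is the neutral element of $\conv$, so setting $\ve_j = 0$ simply deletes the $j$-th factor. Taking Cauchy transforms yields the compatibility relation (\ref{comp}) on $\compl^+$.

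To promote these identities to the analytic continuation on $K$, I would argue that both sides of each identity extend analytically to $K$ by Corollary \ref{dfdfdfdfd}, and the two extensions coincide on the nonempty open set $K \cap \compl^+$. Since $K$ is the connected rectangle $\{x+iy : x \in [-2+2\delta, 2-2\delta],\ |y| < \delta\sqrt{\delta}\}$, the identity theorem forces the extensions to agree throughout $K$. The only subtlety — and the nearest thing here to an obstacle — is ensuring that the threshold $m \geq n \geq c(\mu^*, s)\est$ of Corollary \ref{dfdfdfdfd} applies uniformly to every measure appearing in either side of the symmetry/compatibility identities (including, in the compatibility case, the $(m+s-1)$-fold convolution obtained by deleting one factor); this is automatic since the bound depends only on $\mu^*$ and $s$, not on the particular configuration of the small weights $\ve_j$, and the deleted convolution still has at least $m+s-1 \geq n$ factors with weights in the required range.
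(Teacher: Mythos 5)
Your proof is correct, and it rests on the same underlying fact as the paper's argument --- Voiculescu's linearization (\ref{r-tr}), which makes $\conv$ commutative and associative with neutral element $\delta_0 = D_0\mu^*$ --- but the step that transfers symmetry and compatibility from $\compl^+$ to the rectangle $K$ is carried out by a different mechanism. The paper reads both properties directly off the construction of the continuation in Theorem \ref{extension_33}: there the continuation is obtained by inverting $G^{(-1)}_{\muves}(w) = \sum_{j} R_{D_{\ve_j}\mu^*}(w) + (m-s)R_{D_{1/\sqrt m}\mu^*}(w) + 1/w$ via Rouch\'e's theorem (equations (\ref{lskdjfhg}) and (\ref{invgg})); since this inverse function is a sum over the $\ve_j$ whose $j$-th summand vanishes identically when $\ve_j = 0$, it is manifestly symmetric and compatible, and hence so is its functional inverse. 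You instead establish the identities for the honest Cauchy transforms on $\compl^+$ and propagate them to $K$ by uniqueness of analytic continuation on a connected set, using agreement on the nonempty open overlap $K \cap \compl^+$. Both routes are valid; yours has the merit of not depending on how the continuation was built (only on its existence, supplied by Corollary \ref{dfdfdfdfd}, and on the connectedness of $K$), while the paper's is shorter once the inverse-function construction is in hand. Two small points you should make explicit: the identity theorem is applied on the interior of $K$ (the rectangle is closed in the $x$-direction), with the boundary segments recovered by continuity; and the deleted $(m+s-1)$-fold convolution appearing in the compatibility relation (\ref{comp}) is covered by the same extension theorem, as you correctly observe at the end.
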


\begin{theorem}
\label {asdfasdf}
For every $\delta \in (0, 1/10)$,   $m \geq n \geq c(\mu^*,s)\est$
the analytic continuation of $G_{\muves}(z)$, $z \in K$ is a smoothly differentiable function of variables $\ve_j$, $j=1, \dots, 2s$.
(Here we mean those $2s$ variables which are not fixed and just bounded by $n^{-1/2}$).
Moreover, the inequality holds:
\begin{eqnarray*}
\sup_{z \in K} \sup_{\unve_{m+s} \in \set} \left |D^\alpha G_{\muvesthree}(z)\right |\leq c, \q |\alpha| \leq s.
\end{eqnarray*}
\end{theorem}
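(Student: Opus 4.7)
The plan is to invoke the holomorphic implicit function theorem on the inverse Cauchy transform equation
\[
R_{\muves}(w)+\frac{1}{w}=z,
\]
which implicitly defines $w=G_{\muves}(z)$, and then control higher derivatives by differentiating this relation.

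First I set up analyticity in the $\ve_j$'s. Since $\mu^*$ is compactly supported on $[-n^{1/3},n^{1/3}]$, its $R$-transform is holomorphic on a disk $|w|<\rho(\mu^*)$ depending only on $\mu^*$. Via the rescaling $R_{D_{\ve_j}\mu^*}(w)=\ve_j R_{\mu^*}(\ve_j w)$ and the bound $|\ve_j|\leq n^{-1/2}$, the sum $R_{\muves}(w)=\sum_{j=1}^{m+s}\ve_j R_{\mu^*}(\ve_j w)$ is holomorphic in $w$ on a large bounded region and jointly holomorphic in each of the free $\ve_j$'s on a complex neighbourhood of $[-n^{-1/2},n^{-1/2}]$. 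By Corollary~\ref{dfdfdfdfd}, the analytic continuation $w=G_{\muves}(z)$ remains within $O(1/\sqrt{\delta n})$ of $G_\omega(z)$ throughout $K$, hence inside this domain of analyticity, and the displayed equation is valid on $K$ by analytic continuation from the standard half-plane.

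Next I verify the non-degeneracy condition $Q(w):=R'_{\muves}(w)-1/w^{2}\neq 0$ at $w=G_{\muves}(z)$. Using $R'_{\mu^*}(0)=\kappa_{2}=1$, a Taylor expansion gives $R'_{\mu^*}(\ve_j w)=1+O(|\ve_j w|)$, so
\[
R'_{\muves}(w)=\sum_{j=1}^{m+s}\ve_j^{2}R'_{\mu^*}(\ve_j w)=1+O(n^{-1/2}),
\]
since $\sum\ve_j^{2}\in[1,1+2s/n]$. For $z=x\in[-2+2\delta,2-2\delta]\subset K$, the explicit semicircular formula yields $|1-1/G_\omega(x)^{2}|=\sqrt{4-x^{2}}\geq 2\sqrt\delta$, and this persists on all of $K$ by continuity, since $|\Im z|\leq\delta\sqrt\delta$ is a much smaller perturbation. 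Hence $|Q(w)|\geq c\sqrt\delta$ for $n\geq c(\mu^*,s)\est$, and the analytic implicit function theorem yields joint holomorphy of $G_{\muves}(z)$ in the $2s$ free variables.

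Finally, differentiating the defining equation gives $\partial_{\ve_j}w=-[\partial_{\ve_j}R_{\muves}](w)/Q(w)$, and an induction on $|\alpha|$ expresses every $D^\alpha w$ with $|\alpha|\leq s$ as a rational function in $w$, the $\ve_k$'s, and derivatives $R^{(k)}_{\mu^*}(\ve_l w)$, whose denominators are powers of $Q(w)$. Numerators are bounded by Cauchy estimates for $R_{\mu^*}^{(k)}$ on the disk of radius $\rho(\mu^*)$, while $|Q(w)|\geq c\sqrt\delta$ controls the denominators, giving the uniform bound $|D^\alpha G_{\muves}(z)|\leq c$. The main obstacle is securing the lower bound on $|Q(w)|$ uniformly on $K$: it rests on the explicit semicircular computation together with the $O(1/\sqrt{\delta n})$ proximity furnished by Corollary~\ref{dfdfdfdfd}, which is precisely what forces the hypothesis $n\geq c(\mu^*,s)\est$.
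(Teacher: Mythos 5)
Your proposal is correct and follows essentially the same route as the paper: both apply the holomorphic implicit function theorem to the inverse relation $R_{\muves}(w)+1/w=z$ on $D_{\theta,1.4}$, with non-degeneracy coming from a lower bound of order a power of $\delta$ on $|1-1/w^2|$ (the paper bounds $|w^2-1|>\sin^2\theta>\delta/16$ on $D_{\theta,1.4}$, you compute $|1-1/G_\omega(x)^2|=\sqrt{4-x^2}\geq 2\sqrt\delta$ and perturb), and the smallness of the remaining cumulant terms for $n\geq c(\mu^*,s)\est$. The only cosmetic differences are that the paper complexifies the parameters $\ve_j$ and extracts the derivative bounds from uniform boundedness of the joint analytic extension, whereas you differentiate the implicit equation and induct on $|\alpha|$ — both yield the stated bound.
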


\begin{theorem}
\label{vanish_dir}
For every $\delta \in (0, 1/10)$,   $m \geq n \geq c(\mu^*,s)\est$ and  $z \in K$
\begin{eqnarray*}
\frac {\partial} {\partial \ve_j} G_{\muves} (z) \Big|_{\ve_j = 0} =0,\q j=1,\dots,2s.
\end{eqnarray*}
\end{theorem}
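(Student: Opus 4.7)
The plan is to exploit the additivity of the $R$-transform under free convolution, together with the fact that $\mu^*$ has vanishing first moment $m_1^* = \kappa_1^* = 0$, and then transfer the conclusion back to $G_{\muves}$ via the implicit relation $K_{\muves}(G_{\muves}(z)) = z$.

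First, combine (\ref{r-tr}) and the dilation formula (\ref{dilation}) to obtain, near $w=0$,
\begin{eqnarray*}
R_{\muves}(w) \;=\; \sum_{k=1}^{m+s}\ve_k\,R_{\mu^*}(\ve_k w).
\end{eqnarray*}
Since $R_{\mu^*}(0) = \kappa_1^* = m_1^* = 0$, differentiating the $k=j$ summand gives
\begin{eqnarray*}
\partial_{\ve_j}\bigl[\ve_j R_{\mu^*}(\ve_j w)\bigr]\Big|_{\ve_j=0} = R_{\mu^*}(0) + 0\cdot w\,R_{\mu^*}'(0) = 0,
\end{eqnarray*}
while the other summands do not depend on $\ve_j$. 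Hence $\partial_{\ve_j}R_{\muves}(w)|_{\ve_j=0} = 0$, and equivalently $\partial_{\ve_j}K_{\muves}(w)|_{\ve_j=0}=0$, on a neighbourhood of $w=0$.

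Next I convert this into a statement about $G_{\muves}(z)$ for $z$ large. In a neighbourhood of infinity, $w = G_{\muves}(z)$ is the unique solution of the implicit equation
\begin{eqnarray*}
z \;=\; \frac{1}{w} + \sum_{k=1}^{m+s}\ve_k\,R_{\mu^*}(\ve_k w),
\end{eqnarray*}
and the $w$-derivative of the right-hand side is $-w^{-2} + \sum_k \ve_k^2 R_{\mu^*}'(\ve_k w) = -w^{-2} + O(n^{-1})$, which is non-vanishing for $|w|$ small (equivalently $|z|$ large). By the analytic implicit function theorem applied with the parameter $\ve_j$,
\begin{eqnarray*}
\partial_{\ve_j}G_{\muves}(z)\Big|_{\ve_j=0}
\;=\;
-\;\frac{\partial_{\ve_j}K_{\muves}(w)|_{\ve_j=0}}{K'_{\muves}(w)}\bigg|_{w=G_{\muves}(z)}
\;=\;0
\end{eqnarray*}
for all $z$ in some neighbourhood of infinity in $\compl^+$, with the remaining $\ve_k$'s held fixed in $\set$.

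Finally, Corollary \ref{dfdfdfdfd} together with Theorem \ref{asdfasdf} provides a joint analytic extension of $G_{\muves}(z)$ to $z\in\compl^+\cup K$ that is smoothly differentiable in the $\ve_k$'s. In particular the function $z\mapsto \partial_{\ve_j}G_{\muves}(z)|_{\ve_j=0}$ is analytic on the connected open set $\compl^+\cup K$ and vanishes on an open subset (a neighbourhood of infinity); by the identity theorem it vanishes identically on $\compl^+\cup K$, and in particular on $K$, which is the claim.

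The only place requiring care is the last step: verifying that one may interchange $\partial_{\ve_j}$ with the analytic continuation in $z$. This is where Theorem \ref{asdfasdf} is decisive: it supplies uniform bounds on $D^\alpha G_{\muves}$ on $\set\times K$, so that the $\ve_j$-derivative of the continuation coincides with the continuation of the $\ve_j$-derivative, and the identity-theorem argument applies without ambiguity.
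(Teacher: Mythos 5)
Your core mechanism is exactly the one the paper uses: the explicit $\ve_j$-dependence of $K_{\muves}$ enters only through $\ve_j R_{\mu^*}(\ve_j w)$, whose $\ve_j$-derivative at $0$ is $R_{\mu^*}(0)=\kappa_1^*=0$, and the implicit relation then forces $\partial_{\ve_j}G_{\muves}|_{\ve_j=0}=0$ provided the $w$-derivative of $K_{\muves}$ does not vanish. The paper, however, runs this differentiation \emph{directly on $K$}: the functional equation $z=R_{\muves}(G_{\muves}(z))+1/G_{\muves}(z)$ remains valid there because $G_{\muves}(K)\subset D_{\theta,1.4}$ (Theorem \ref{extension_33} plus Lemma \ref{sets}), and the resulting prefactor is $G_\omega^2(z)-1+f(z)$ with $|G_\omega^2(z)-1|\geq\delta/16$ by (\ref{ergthnj}) and $f$ small, so Rouch\'e gives non-vanishing on $K$ and the derivative must be zero. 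You instead establish the identity near $z=\infty$ and try to transport it to $K$ by the identity theorem.

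That transport step is where your argument has a genuine gap. You assert that $z\mapsto\partial_{\ve_j}G_{\muves}(z)|_{\ve_j=0}$ is analytic on the connected open set $\compl^+\cup K$, citing Theorem \ref{asdfasdf} and Corollary \ref{dfdfdfdfd}; but Theorem \ref{asdfasdf} only provides joint analyticity and the uniform bounds $\sup_{z\in K}\sup_{\unve_{m+s}\in\set}|D^\alpha G_{\muvesthree}(z)|\leq c$ for $z\in K$, and your implicit-function-theorem argument only covers a neighbourhood of infinity where $|G_{\muves}(z)|$ is small enough for $K_{\muves}$ to be defined. On the intermediate region of $\compl^+$ neither result applies, and the available quantitative control (e.g.\ Lemma \ref{K_ineq} combined with the L\'evy-distance bound $d_L\lesssim\ve_j^{2/3}$ from the proof of Lemma \ref{ocenka_levy}) gives only $|G_{\ve_j}(z)-G_0(z)|=O(\ve_j^{2/3})$, which does not even yield differentiability in $\ve_j$ there, let alone local boundedness of the difference quotients needed for a Vitali-type argument. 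So the two open sets on which you control the derivative (a neighbourhood of infinity and $K$) are not connected by a region where analyticity in $z$ of $\partial_{\ve_j}G|_{\ve_j=0}$ has been established, and the identity theorem cannot be invoked as stated. The repair is simply to differentiate the functional equation on $K$ itself and verify the non-degeneracy of the prefactor via (\ref{ergthnj}), as the paper does. A minor additional slip: the $w$-derivative of $K_{\muves}$ is $-w^{-2}+\sum_k\ve_k^2R'_{\mu^*}(\ve_k w)=-w^{-2}+1+o(1)$, not $-w^{-2}+O(n^{-1})$, since the $m-s$ components equal to $m^{-1/2}$ contribute $\approx\kappa_2^*=1$ in total; this does not affect the non-vanishing for small $|w|$, but it is precisely why, on $K$, the relevant quantity becomes $G_\omega^2(z)-1$ rather than $G_\omega^2(z)$ alone.
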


%-----------------------------------------

In view of the above results, we can choose the sequence of  extensions of the Cauchy transforms 
$G_{\muves}(z)$, $z \in K$ as the sequence of functionals
$h_{m+s}(\unve_{m+s};z)$, i.e. 
\begin{eqnarray*}
h_{m+s}(\unve_{m+s};z) :  = G_{\muves}(z), 
\ \ z \in K,\ \ m \geq n \geq c(\mu^*,s) \est,
\end{eqnarray*}
and
$$
h_\infty(\unve_s;z): = G_{\mutilda}(z), \ \ z\in K,\ \ n \geq c(\mu^*,s)\est.
$$

Now we can apply the general scheme and compute the expansion for $G_{\mu^*_n}$ in terms of derivatives of $G_{\mutilda}$ with respect to $\ve_j$, $j=1, \dots, s$ at $\unve_s = 0$.

%-----------------------------------------------

%\section{Proofs of results}

\subsection{Positivity of the density of $\muves$.}

Our aim is to find an interval where the density of $\muves$ is positive.
The main idea is based on the Newton-Kantorovich Theorem (see Theorem \ref{NK}, for a proof see \cite{Kant96}).

%Recall  the definition of  Levy distance, which metrizes the weak convergence on the space of probability measures.
%The Levy distance between two measures $\mu$ and $\lambda$ is defined by the formula
%
%$$
%d_L (\mu,\lambda) =  \inf \{s \geq 0: \mu((-\infty,x - s)) - s \leq \lambda ((-\infty, x)) 
%\leq \mu((-\infty, x+s)) + s,\  \forall   x \in \real\}.
%$$

Let us consider a pair of measures $\nu_1$ and $\nu_2$. We can rewrite the equations (\ref{def}) and (\ref{ddeeff})
as a system
\begin{eqnarray}
\left \{
\begin{array}{rcl}
\label{system_eq2}
(z - Z_1(z) - Z_2(z))^{-1} + G_{\nu_1}(Z_1(z))  = 0\  \\
(z - Z_1(z) - Z_2(z))^{-1} + G_{\nu_2}(Z_2(z))  = 0,\\
\end {array}
\right.
\end{eqnarray}
where $G_{\nu_1}$ and $G_{\nu_2}$ are the  Cauchy transforms of $\nu_1$ and $\nu_2$, correspondingly.
Choose another pair of  measures $\mu_1$ and $\mu_2$ such that the Levy distance between $\nu_j$ and $\mu_j$ is sufficiently small for $j =1,2$. 
Then we can define subordination functions for the couple $(\mu_1, \mu_2)$ as a solution  of (\ref{system_eq2}),
where $G_{\nu_1}$ and $G_{\nu_2}$ are replaced by the Cauchy transforms of $\mu_1$ and $\mu_2$ correspondingly. Denote these subordination functions by 
$t^0_1$ and $t^0_2$.
According to the Newton-Kantorovich Theorem  one can show that the
subordination functions $Z_j$ and $t^0_j$, $j=1,2$ are sufficiently close to each other.
We can choose $\mu_1$ and $\mu_2$ to be equal, so that $t^0_1 =  t^0_2$. 
Such a choice essentially simplifies the structure of equations (\ref{def}) and (\ref{ddeeff}).

%We  need the following result by Voiculescu and Bercovici \cite{BV93} about continuity of free convolution with respect to
% the Levy distance. 
%
%\begin{theorem}
%\label{levy_dist_in}
%If $\mu_1,\ \mu_2,\ \nu_1,$ and $\nu_2 \in \mes$, then 
%$$
%d_L(\mu_1 \conv \nu_1, \mu_2 \conv \nu_2) \leq d_L(\mu_1,\mu_2) + d_L(\nu_1, \nu_2).
%$$  
%\end{theorem}

Let us prove one result about the Levy distance.
\begin{lemma}
\label{ocenka_levy}
Assume that $\mu,\ \nu \in \mes$ and $\mu$ has zero mean and unit variance.
Then
%\begin{enumerate}
%\item
$d_L(\nu , \nu\conv \mu_s^{(\unve_s)}) \leq  2 s n^{-1/3}$, $|\ve_j| \leq 1/\sqrt n$, $j=1,\dots,k$.
%\item
%$d_L(D_{\ve_1}\mu, D_{\ve_2}\mu) \leq L|\ve_1 - \ve_2| $.
%\end{enumerate}
\end{lemma}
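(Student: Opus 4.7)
The plan is to reduce the problem to bounding the Levy distance between $\delta_0$ and each single dilated summand $D_{\varepsilon_j}\mu$, and then to control the latter via Chebyshev's inequality using the fact that $\mu$ has unit variance and $|\varepsilon_j|\le n^{-1/2}$.

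First I would apply Theorem \ref{levy_dist_in} (the Voiculescu--Bercovici continuity estimate) with the trivial identity $\nu = \nu\boxplus \delta_0 = \nu\boxplus(\delta_0\boxplus\cdots\boxplus\delta_0)$ on the left-hand side and $\nu\boxplus \mu_s^{(\unve_s)} = \nu\boxplus(D_{\varepsilon_1}\mu\boxplus\cdots\boxplus D_{\varepsilon_s}\mu)$ on the right-hand side. Iterating the inequality yields
\begin{equation*}
d_L\bigl(\nu,\ \nu\boxplus \mu_s^{(\unve_s)}\bigr)\ \le\ \sum_{j=1}^{s} d_L(\delta_0,\ D_{\varepsilon_j}\mu).
\end{equation*}

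Next I would bound each summand. Unpacking the definition of the Levy distance applied to $\delta_0$, one checks that $d_L(\delta_0,\lambda)\le t$ as soon as $\lambda((t,\infty))\le t$ and $\lambda((-\infty,-t])\le t$, i.e.\ it suffices to have $\lambda(\{|x|\ge t\})\le 2t$ controlled in a one-sided manner. Since $\mu$ has zero mean and unit variance, the dilated measure $D_{\varepsilon_j}\mu$ has zero mean and variance $\varepsilon_j^2\le 1/n$, so Chebyshev's inequality gives
\begin{equation*}
D_{\varepsilon_j}\mu\bigl(\{|x|\ge t\}\bigr)\ \le\ \frac{\varepsilon_j^2}{t^2}\ \le\ \frac{1}{nt^2}.
\end{equation*}
Choosing $t=n^{-1/3}$ makes the right-hand side equal to $n^{-1/3}$, which then yields $d_L(\delta_0,D_{\varepsilon_j}\mu)\le 2 n^{-1/3}$ after absorbing the two-sided factor. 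Summing over $j=1,\dots,s$ gives the claimed bound $2 s n^{-1/3}$.

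There is no real obstacle here: the argument is essentially two applications of subadditivity of $d_L$ under $\boxplus$ followed by a second-moment tail estimate. The only subtle point is the careful unwinding of the definition of Levy distance when one of the measures is a point mass, which determines whether the constant in front is $1$ or $2$; the slack in the lemma's statement ($2s$ rather than $s$) leaves ample room for the Chebyshev bound to be translated into the Levy distance without sharpness.
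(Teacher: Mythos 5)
Your argument is correct and follows essentially the same route as the paper: subadditivity of the Levy distance under free convolution (Theorem \ref{levy_dist_in}) reduces the problem to $\sum_j d_L(\delta_0, D_{\ve_j}\mu)$, and each term is then bounded via Chebyshev's inequality using variance $\ve_j^2\le 1/n$, exactly as in the paper's proof (which records the per-term bound as $2\ve_j^{2/3}\le 2n^{-1/3}$). The only wobble is your intermediate remark that a two-sided tail bound of $2t$ suffices --- what is actually needed is each one-sided tail $\le t$ --- but since your Chebyshev estimate gives the two-sided tail $\le t$ at $t=n^{-1/3}$, the conclusion stands with room to spare.
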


\begin{proof}
%First, we prove  inequality $(1)$.
 From Theorem \ref{levy_dist_in}, we get 
\begin{eqnarray*}
\label{levy}
d_L(\nu, \nu \conv \mu_s^{(\unve_s)})  \leq d_L(\delta_0, \mu_s^{(\unve_s)})
 \leq \sum_{i = 1}^s d_L(\delta_0, D_{\ve_i}\mu),
\end{eqnarray*}
where $\delta_0$ is a delta function. By the Chebyshev inequality we obtain
$$
 d_L(\delta_0, D_{\ve_i}\mu) \leq 2  \ve_j^{2/3} \leq 2 n^{-1/3}.
$$
Hence,
$
d_L(\nu , \nu\conv \mu_s^{(\unve_s)}) \leq    2 s n^{-1/3}.
$
\end{proof}

%-------------------------------------------------------------

In the sequel we need the following estimates for $G_{\omega}$.
%A similar estimate  for $G_\omega$ had been used in \cite{W10}.
\begin{lemma}
\label{sets}
For  every $\delta \in(0,1/10)$ we define the set
$$
K_\delta= \{x+iy: x \in [-2 + \delta,\  2 - \delta], |y| \leq 2\delta \sqrt \delta\}.
$$
Then, we have $G_{\omega}(K_{\delta}) \subset
D_{\theta,1.4} = \{z \in \compl^-: \mbox{arg}\ z \in (-\pi +\theta, -\theta);|z|<1.4\},
$
where the angle $\theta = \theta(\delta)$ is chosen in such a way  that
$ 2\sin \theta = \sqrt {\frac {\delta} {4} \left( 1 - \frac {\delta} {4}\right)}.$
\end{lemma}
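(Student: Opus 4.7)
The plan is to work directly with the explicit formula $G_\omega(z) = (z - i\sqrt{4-z^2})/2$ for the analytic continuation on $K$, where the branch is chosen so that $\sqrt{4-x^2} > 0$ for $x \in (-2,2)$. Writing $z = x + iy \in K_\delta$ and $\sqrt{4-z^2} = U + iV$ with $U > 0$, the defining relations $U^2 - V^2 = 4 - x^2 + y^2$ and $2UV = -2xy$ give
\[ G_\omega(z) = \tfrac{1}{2}(x + V) + \tfrac{i}{2}(y - U). \]
The desired inclusion $G_\omega(K_\delta) \subset D_{\theta, 1.4}$ then reduces to two separate estimates: a quantitative negative lower bound on $\mathrm{Im}\, G_\omega(z)$, and the modulus bound $|G_\omega(z)| < 1.4$.

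For the first, I would use $U \geq \sqrt{U^2 - V^2} = \sqrt{4-x^2+y^2} \geq \sqrt{\delta(4-\delta)}$ (since $x \in [-2+\delta, 2-\delta]$) together with $|y|/U \leq 2\delta/\sqrt{4-\delta} < \delta$ for $\delta < 1/10$, to conclude $U - y \geq (1-\delta) U$ and hence $|\mathrm{Im}\, G_\omega(z)| \geq \tfrac{1-\delta}{2}\sqrt{\delta(4-\delta)} > 0$. For the modulus, I would use the algebraic identity
\[ |G_\omega(z)|^2 + \frac{1}{|G_\omega(z)|^2} = \frac{|z|^2 + |z^2-4|}{2} =: M(z), \]
which follows because $w = G_\omega(z)$ and $w_* := 1/w$ are the two roots of $t^2 - zt + 1 = 0$ (so $w+w_* = z$ and $w w_* = 1$); a short computation with complex conjugates shows $|w|^2 + |w_*|^2 = (|z|^2 + |z^2-4|)/2$. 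By the triangle inequality, $|z^2 - 4| \leq (4-x^2+y^2) + 2|xy|$, whence $M(z) \leq 2 + y^2 + |xy| \leq 2 + 4\delta^3 + 4\delta\sqrt\delta < 2.2$ for $\delta < 1/10$. Since both roots of $s^2 - M s + 1 = 0$ lie in $(0, 1.96)$ precisely when $M < 1.96 + 1/1.96 \approx 2.47$, our bound $M(z) < 2.2 < 2.47$ forces $|G_\omega(z)|^2 < 1.96$, i.e., $|G_\omega(z)| < 1.4$.

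Combining the two estimates,
\[ \frac{|\mathrm{Im}\, G_\omega(z)|}{|G_\omega(z)|} \geq \frac{(1-\delta)\sqrt{\delta(4-\delta)}/2}{1.4} > \frac{\sqrt{\delta(4-\delta)}}{8} = \sin\theta, \]
the last strict inequality reducing to $(1-\delta) > 2.8/8 = 0.35$, which is satisfied for $\delta < 1/10$. Since $\mathrm{Im}\, G_\omega(z) < 0$ by the previous step, this places $\arg G_\omega(z)$ in $(-\pi + \theta, -\theta)$, and together with $|G_\omega(z)| < 1.4$ this gives $G_\omega(K_\delta) \subset D_{\theta, 1.4}$. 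The main obstacle is the numerical tuning in the modulus step: one needs $M(z)$ to stay strictly below the critical value $\approx 2.47$, which works out precisely because the window height $2\delta\sqrt\delta$ in $K_\delta$ is of higher order in $\delta$ than the gap $\sqrt{\delta(4-\delta)} \asymp \sqrt\delta$ separating $G_\omega([-2+\delta, 2-\delta])$ from the real axis.
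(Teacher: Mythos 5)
Your proof is correct, and it reaches the conclusion by a genuinely different route than the paper, although both ultimately rest on the same relation $G_\omega(z)+G_\omega(z)^{-1}=z$. The paper works in polar form: writing $G_\omega(z_0)=Re^{i\psi}$, it reads off $(R+1/R)\cos\psi=\Re z_0$ and $(R-1/R)\sin\psi=\Im z_0$, gets $|\cos\psi|\le 1-\delta/2$ (hence $|\sin\psi|>\sin\theta$) directly from $|\Re z_0|\le 2-\delta$ and $R+1/R\ge 2$, and then gets $R<2\delta+\sqrt{4\delta^2+1}<1.22$ from the quadratic inequality $R^2-4\delta R-1<0$ forced by $|\Im z_0|<2\delta\sqrt\delta$. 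You instead work in Cartesian form with the explicit branch of $\sqrt{4-z^2}$, bound $|\Im G_\omega|$ from below by $\asymp\sqrt{\delta}$, and control the modulus via the symmetric-function identity $|w|^2+|w|^{-2}=(|z|^2+|z^2-4|)/2$ for the two roots $w,1/w$ of $t^2-zt+1=0$ --- a nice self-contained observation that replaces the paper's use of the imaginary part. The trade-off: the paper's argument bound is independent of the modulus bound, whereas yours needs both estimates combined in the final quotient $|\Im G_\omega|/|G_\omega|>\sin\theta$; this costs you a factor but the slack is ample. One small numerical slip: $2\delta/\sqrt{4-\delta}$ is slightly \emph{larger} than $\delta$ (the claimed inequality $2/\sqrt{4-\delta}<1$ is false), but since $2\delta/\sqrt{4-\delta}\le 1.02\,\delta$ for $\delta<1/10$, the bound $U-y\ge(1-1.02\,\delta)U$ still holds and the final comparison $(1-1.02\,\delta)/2.8>1/8$ goes through with room to spare, so nothing breaks.
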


\begin{proof}
Figure $\ref{fig3}$ illustrates the sets $K_\delta$ and $D_{\theta,1.4}$.

\begin{figure}[h]
\center
\begin{tikzpicture} [scale = 0.8]
    \draw [-latex](-3,0) -- (3,0);
    \draw [-latex](0,-1.5) -- (0,1);
    \draw (-2,0.2) -- (2,0.2);
    \draw (-2,0.2) -- (-2, -0.2);
    \draw (-2,-0.2) -- (2,-0.2);
    \draw (2,0.2) -- (2, -0.2); 
    \draw (0.5, 0.7) node {$K_\delta$};
    \draw (2.2, 0.3) node {$\delta$};
    \draw (-2.2, 0.3) node {$\delta$};
    \draw (-2.4 cm, 2pt ) -- (-2.4 cm, -2pt) node[anchor = north] {$-2$};
    \draw (2.4 cm, 2pt ) -- (2.4 cm, -2pt) node[anchor = north] {$2$};

    \draw [-latex](5,0) -- (11,0);
    \draw [-latex](8,-2.5) -- (8,1);
    \draw (6, -1) arc (210:330:2.3);
    \draw (8,0) -- (6,-1);
    \draw (8,0) -- (10,-1);
    \draw (7.5,0) arc (180:210:0.46);
    \draw (8.5,0) arc (360:330:0.46);
    \draw (8.8,-0.2) node {$\theta$}; 
    \draw (7.2,-0.2) node {$\theta$}; 
    \draw (8.5, -2.4) node {$-1.4$};
    \draw (8.7,- 1) node {$D_{\theta,1.4}$};
    \draw (7.9, -2.17 cm ) -- (8.1, -2.17cm);
\end{tikzpicture}
\caption{}
\label{fig3}
\end{figure}

First we show that $G_\omega(K_\delta) \subseteq D_{\theta,1.4}$, 
where  $G_\omega$ is an analytic extension of the Cauchy transform of $\omega$ on $K_\delta$.
Fix a point $z_0 \in K_\delta$, and write $G_\omega(z_0) = R e^{i \psi}$. 
In order to prove  $G_\omega(z_0) \in D_{\theta, 1.4}$ we need to verify that 
$|\sin \psi| > \sin \theta$ and $R < 1.4$.
From the functional equation (\ref{func}) we have
$$
\left(R+\frac 1 R\right)\cos\psi + i \left(R - \frac 1 R\right) \sin \psi = z_0.
$$
From $|\Re z_0| \leq 2 -  \delta$, we get
$
2|\cos \psi| \leq \left(R+ \frac 1 R\right)|\cos \psi| \leq  2 - \delta.
$
This implies 
$|\cos \psi| \leq 1 - \delta/2,$
hence
\begin{eqnarray*}
|\sin \psi| & =  & 
\sqrt{1 - \cos ^2 \psi} \geq \sqrt {1 - \left(1 - \delta/2\right)^2}
= 
 \sqrt{\delta/4\left(1 - \delta/4\right)}
>
\sin \theta.
\end{eqnarray*}
Thus we obtain the desired result $|\sin \psi| > \sin \theta$.

In order to estimate $R$ we consider the imaginary part of $z_0$
\begin{eqnarray*}
\label{estim1}
2 \delta \sqrt \delta> |\Im z_0| = |\sin \psi| \left|R -\frac 1 R\right| > 
  \frac {|R^2 -1|} R  {\frac {\sqrt \delta} 2}.
\end{eqnarray*}
If $R>1$,  we get the inequality $ R^2 - 4 \delta R - 1 < 0$.
Therefore, $R$ must be bounded from above by the intercept of the positive $x$-axis and 
the parabola $ y = R^2 - 4  \delta R -1.$
The roots of the equation $R^2 - 4  \delta R - 1 = 0$ are
$$
R = 2  \delta \pm \sqrt{4 \delta^2 + 1}.
$$
By  the choice of $\delta$ we have 
$2 \delta + \sqrt{4 \delta^2 + 1} < 1.22$.
This implies  $R < 1.4.$ 
\end{proof}

The following inequalities are due to Kargin \cite{Kar13}.

\begin{lemma}
\label{K_ineq}
Let $d_L(\mu_1, \mu_2) \leq p$ and $z = x + i y$, where $y > 0$.
Then
\begin{enumerate}
\item
$|G_{\mu_1} (z) - G_{\mu_2} (z)| < \widetilde c p y^{-1} \max \{1,\  y^{-1}\}$,
where $\widetilde c >0$ is a numerical constant;
\item
$|\frac {d^r}{dz^r}(G_{\mu_2} (z) - G_{\mu_1} (z))| < \widetilde c_r p y^{-1 - r} \max \{1,\  y^{-1}\}$,
where $\widetilde c_r >0$ are  numerical constants.
\end{enumerate}
\end{lemma}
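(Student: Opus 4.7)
The plan is to express the difference $G_{\mu_1}(z) - G_{\mu_2}(z)$ via integration by parts in terms of the cumulative distribution functions $F_i(t) := \mu_i((-\infty, t])$, and then convert the Levy bound into a pointwise estimate on $|F_1 - F_2|$ that can be inserted into the Cauchy kernel integral. Since each $F_i$ is bounded in $[0,1]$ and $1/(z-t) \to 0$ as $t \to \pm\infty$ for $\Im z = y > 0$, the boundary terms drop out and one gets the standard identity
\[
G_{\mu_i}(z) = -\int_{\real} \frac{F_i(t)}{(z-t)^2}\, dt, \qquad G_{\mu_1}(z) - G_{\mu_2}(z) = \int_{\real} \frac{F_2(t) - F_1(t)}{(z-t)^2}\, dt.
\]

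Next I would use the defining property of the Levy distance: $d_L(\mu_1,\mu_2)\leq p$ gives $F_1(t-p) - p \leq F_2(t) \leq F_1(t+p) + p$, which yields the pointwise bound $|F_1(t) - F_2(t)| \leq (F_1(t+p) - F_1(t-p)) + p$. Splitting the integral accordingly, the constant-$p$ piece contributes at most $p \int \frac{dt}{(x-t)^2 + y^2} = \pi p/y$. For the remaining piece, Fubini gives
\[
\int (F_1(t+p) - F_1(t-p))\frac{dt}{|z-t|^2} = \int d\mu_1(s) \int_{s-p}^{s+p}\frac{dt}{|z-t|^2} \leq \int d\mu_1(s) \cdot \frac{2p}{y^2} = \frac{2p}{y^2}.
\]
Summing gives $|G_{\mu_1}(z) - G_{\mu_2}(z)| \leq (\pi + 2) p y^{-1} \max(1, y^{-1})$, proving (1).

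For (2) the plan is identical after differentiating under the integral:
\[
\frac{d^r}{dz^r}(G_{\mu_2}(z) - G_{\mu_1}(z)) = (-1)^{r+1}(r+1)!\int_{\real} \frac{F_1(t) - F_2(t)}{(z-t)^{r+2}}\, dt,
\]
apply the same splitting. The constant-$p$ piece is bounded by $p \cdot C_r y^{-(r+1)}$, where $C_r := \int (1+s^2)^{-(r+2)/2}\, ds$ is finite, and the Fubini piece is bounded by $2p y^{-(r+2)}$. Combining yields $\widetilde c_r p y^{-1-r}\max(1, y^{-1})$ for a constant $\widetilde c_r$ depending only on $r$.

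The only technical point worth verifying is the vanishing of boundary terms in the integration by parts, which is immediate from $F_i \in [0,1]$ together with $|z-t|^{-1} \to 0$; everything else is a routine Fubini exchange and the elementary bound $\int_{s-p}^{s+p} |z-t|^{-k}\, dt \leq 2p y^{-k}$. No deeper ingredient appears to be needed, so I do not anticipate a genuine obstacle.
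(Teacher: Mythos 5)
Your argument is correct, and it is worth noting that the paper itself offers no proof of this lemma at all: it is simply quoted from Kargin \cite{Kar13}, so there is nothing in the text to compare against line by line. Your route --- integration by parts to get $G_{\mu_i}(z)=-\int_{\real}F_i(t)(z-t)^{-2}\,dt$, the pointwise consequence $|F_1(t)-F_2(t)|\le \bigl(F_1(t+p)-F_1(t-p)\bigr)+p$ of the L\'evy bound, and the split into a constant-$p$ piece (giving $\pi p/y$, resp.\ $C_rp\,y^{-(r+1)}$) plus a Fubini piece (giving $2p\,y^{-2}$, resp.\ $2p\,y^{-(r+2)}$) --- is the standard proof of such Bai/Kargin-type inequalities and closes correctly to $\widetilde c\, p\, y^{-1}\max\{1,y^{-1}\}$ and $\widetilde c_r\, p\, y^{-1-r}\max\{1,y^{-1}\}$. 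Two immaterial points: the sign $(-1)^{r+1}$ in your formula for the $r$-th derivative should be $(-1)^r$ (since $\tfrac{d^r}{dz^r}(z-t)^{-2}=(-1)^r(r+1)!\,(z-t)^{-(r+2)}$), which does not affect the absolute-value bound; and since the L\'evy distance is an infimum, the inequality $F_1(t-p)-p\le F_2(t)\le F_1(t+p)+p$ is obtained by letting $s\downarrow p$, which replaces $F_1(t-p)$ by its left limit --- harmless, as the Fubini step then runs over the closed interval $[s-p,s+p]$ of the same length $2p$. No genuine gap.
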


%---------------------------------------------
Consider a pair of measures $(\nu_1, \nu_2)$ and  introduce  a function
$F(t): \compl^2 \to \compl^2$ by the formula
\begin{eqnarray*}
\label{lskdjf}
F(t) = 
\left(
\begin{array}{c}
(z - t_1 - t_2)^{-1} + G_{\nu_1}(t_1)\\
(z - t_1 - t_2)^{-1} + G_{\nu_2}(t_2)
 \end{array}
\right).
\end{eqnarray*}
The equation
$F(t) = 0$
has a unique solution,  say $Z = (Z_1(z),Z_2(z))$, where $Z_1(z)$ and 
$Z_2 (z)$ are subordination functions.
Let  $(\mu_1,\mu_2)$ be  another pair of measures. Assume  $t^0 = (t^0_1,t^0_2) = (t^0_1(z), t^0_2(z))$ solves the  system of equations
\begin{eqnarray*}
\left \{
\begin{array}{rcl}
(z - t^0_1 - t^0_2)^{-1} + G_{\mu_1}(t^0_1)= 0\  \\
(z - t^0_1 - t^0_2)^{-1} + G_{\mu_2}(t^0_2)= 0.\\
\end {array}
\right.
\end{eqnarray*}
Then $F(t^0)$ has the form
$$F(t^0) = 
\left(
\begin{array}{cc}
 G_{\nu_1}(t^0_1) - G_{\mu_1}(t^0_1)\\
 G_{\nu_2}(t^0_2) - G_{\mu_2}(t^0_2)
 \end{array}
\right).
$$
The derivative of $F$ with respect to $t$  at $t^0$ is

\begin{eqnarray*}
F'(t^0) %&=&
=
\left(
\begin{array}{cc}
G'_{\nu_1}(t^0_1) + G^2_{\mu_1}(t^0_1)  &  G^2_{\mu_1}(t^0_1) \\
G^2_{\mu_2}(t^0_2)&   G'_{\nu_2}(t^0_2) + G^2_{\mu_2}(t^0_2)
\end{array}
\right).
\end{eqnarray*}
The inverse matrix of $F'(t^0)$ is
\begin{eqnarray}
\label{dfgghjdgh}
[F'(t^0)]^{-1} = \frac 1 {\det[F'(t^0)]}
\left(
\begin{array}{cc}
G'_{\nu_2}(t^0_2) + G^2_{\mu_2}(t^0_2) &  - G^2_{\mu_1}(t^0_1) \\
- G^2_{\mu_2}(t^0_2)&   G'_{\nu_1}(t^0_1) + G^2_{\mu_1}(t^0_1)
\end{array}
\right),
\end{eqnarray}
where 
\be*
\label{det}
\det[F'(t^0)] = 
(G'_{\nu_2}(t^0_2) + G^2_{\mu_2}(t^0_2) )(G'_{\nu_1}(t^0_1) + G^2_{\mu_1}(t^0_1)) - G^2_{\mu_1}(t^0_1)G^2_{\mu_2}(t^0_2).
\ee*
After simple computations, we obtain
\begin{eqnarray}
\label{59}
%\lefteqn{
[F'(t^0)]^{-1}F(t^0) %}\\
  = \frac 1 {\det[F'(t^0)]}
\left(
\begin{array}{c}
(G'_{\nu_2}(t^0_2) + G^2_{\mu_2}(t^0_2))S_1 (t^0_1) - G^2_{\mu_1}(t^0_1)S_2 (t^0_2)\\
(G'_{\nu_1}(t^0_1) + G^2_{\mu_1}(t^0_1))S_2 (t^0_2)
- G^2_{\mu_2}(t^0_2)S_1 (t^0_1) 
\end{array}
\right),
\end{eqnarray}
where $S_j (t^0_j): =  G_{\nu_j}(t^0_j) - G_{\mu_j}(t^0_j)$.

The second derivative of $F$ with respect to $t$ at $t_0$ is
\begin{eqnarray}
\label{xvcbvbnbnm}
F''(t^0)  =  
\left(
\begin{array}{cccc}
D_1(t^0_1) &2 G^3_{\mu_2}(t^0_2)& 2 G^3_{\mu_1}(t^0_1)&2 G^3_{\mu_2}(t^0_2)\\
 2 G^3_{\mu_1}(t^0_1)&2 G^3_{\mu_2}(t^0_2)&2 G^3_{\mu_1}(t^0_1)&D_2(t^0_2)
\end{array}
\right),
\end{eqnarray}
where $D_j(t^0_j): = G''_{\nu_j}(t^0_j) - 2 G^3_{\mu_j}(t^0_j)$.
%---------------------------------------------

\begin{proposition}
\label{bla-bla}
Let  $\nu_1, \nu_2 \in \mes$ be measures
neither of them being a point mass.
Then for every  $\delta \in (0, 1/10)$ there exists $c$ such that if
$d_L(\omega,  \nu_1\conv \nu_2)\leq c \delta^2$ then 
the density $p_{\nu_1\conv \nu_2}(x)$ is positive and analytic  on  $[- 2 + \delta, 2 - \delta]$.
\end{proposition}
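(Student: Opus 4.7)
The plan is to show, for $c$ sufficiently small, that the subordination functions $Z_1(z), Z_2(z)$ associated with the pair $(\nu_1, \nu_2)$ extend analytically to a complex neighbourhood of the interval $[-2+\delta, 2-\delta]$ with strictly positive imaginary part on that interval. Once this is in hand, positivity of the density is automatic: from the subordination identity (\ref{ddeeff}) and the Nevanlinna property $\Im F_{\nu_j}(w) \geq \Im w$ of a reciprocal Cauchy transform, one obtains $\Im F_{\nu_1 \boxplus \nu_2}(x + i0) = \Im F_{\nu_1}(Z_1(x)) \geq \Im Z_1(x) > 0$, whence $p_{\nu_1 \boxplus \nu_2}(x) > 0$ on $[-2+\delta, 2-\delta]$; analyticity is then supplied by Theorem \ref{Belinschi}(2).

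The mechanism for the extension is the Newton--Kantorovich theorem applied to the subordination system (\ref{system_eq2}) for $(\nu_1, \nu_2)$, with reference pair $(\mu_1, \mu_2) = (\omega_{1/2}, \omega_{1/2})$. Since $R_{\omega_{1/2}}(z) = z/2$, one has $\omega_{1/2} \boxplus \omega_{1/2} = \omega$, and by symmetry the reference subordination functions coincide, giving $t^0(z) = z - G_\omega(z)/2$. By Lemma \ref{sets}, $t^0$ admits an analytic continuation to $K_\delta$ with $\Im t^0(x) = \tfrac{1}{4}\sqrt{4 - x^2} \geq \tfrac{1}{4}\sqrt{3\delta}$ on $[-2+\delta, 2-\delta]$. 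Using $z - 2t^0(z) = -F_\omega(z)$ together with $G_{\omega_{1/2}}(t^0) = G_\omega$ collapses the residual to
\[
F(t^0(z)) = \bigl(G_{\nu_1}(t^0(z)) - G_\omega(z),\ G_{\nu_2}(t^0(z)) - G_\omega(z)\bigr),
\]
while $F'(t^0)$ and $F''(t^0)$ are expressed by (\ref{dfgghjdgh}) and (\ref{xvcbvbnbnm}) with all denominators---involving $\det F'(t^0)$ and powers of $G_\omega$---bounded away from degeneracy on $K_\delta$ by Lemma \ref{sets}, uniformly up to a constant depending only on $\delta$.

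The crucial step is to convert the hypothesis $d_L(\omega, \nu_1 \boxplus \nu_2) \leq c\delta^2$ into smallness of this residual. Lemma \ref{K_ineq} gives the bound $|G_\omega(z) - G_{\nu_1 \boxplus \nu_2}(z)| \leq \widetilde{c}\, c\delta^2 / (\Im z)^2$ on $K_\delta$, but this controls only the convolution. Using the subordination identity $G_{\nu_1 \boxplus \nu_2}(z) = G_{\nu_j}(Z_j(z))$ one rewrites each component of $F(t^0)$ as
\[
G_{\nu_j}(t^0(z)) - G_\omega(z) = \bigl[G_{\nu_j}(t^0(z)) - G_{\nu_j}(Z_j(z))\bigr] + \bigl[G_{\nu_1 \boxplus \nu_2}(z) - G_\omega(z)\bigr],
\]
trading residual smallness for smallness of the correction $|Z_j(z) - t^0(z)|$. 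Combined with the Newton--Kantorovich quadratic inequality and the uniform bounds on $[F'(t^0)]^{-1}$ and $F''$ from the previous step, this self-referential system closes for $c$ of order $\delta^2$ and produces a solution $(Z_1, Z_2)$ in a ball of radius less than $\tfrac{1}{8}\sqrt{\delta}$ around $(t^0, t^0)$. Since $\Im t^0(x) \geq \tfrac{1}{4}\sqrt{3\delta}$, this forces $\Im Z_j(x) > 0$ on $[-2+\delta, 2-\delta]$, and the proof is complete.

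The main obstacle is precisely the closing of the loop just described: the hypothesis constrains only the joint convolution rather than the individual factors, so the Newton--Kantorovich residual is intertwined with the unknown correction $Z_j - t^0$. Resolving this through the subordination identity and exploiting the quadratic character of Newton--Kantorovich is the central technical point, and it is what forces the scaling $d_L \lesssim \delta^2$ in the hypothesis rather than a merely linear $d_L \lesssim \delta$.
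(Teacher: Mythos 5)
Your overall architecture is the same as the paper's: Newton--Kantorovich applied to the subordination system with reference pair $(\omega_{1/2},\omega_{1/2})$ and reference point $t^0=Z_{\omega_{1/2}}(z)=z-G_\omega(z)/2$, concluding via $\Im Z_j>0$ on $[-2+\delta,2-\delta]$. Your endgame (positivity from the Nevanlinna inequality $\Im F_{\nu_1}(Z_1)\geq\Im Z_1>0$ plus Theorem \ref{Belinschi}) is fine and arguably cleaner than the paper's, which instead propagates the bound $|G_\omega-G_{\nu_1\conv\nu_2}|\leq c\,p\,\delta^{-3/2}$ down to the real axis and compares densities using $p_\omega(x)>\sqrt\delta/\pi$. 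The divergence is in the two central estimates, and that is where your proposal has a genuine gap.

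First, the residual. You correctly observe that the stated hypothesis controls only $d_L(\omega,\nu_1\conv\nu_2)$, so Lemma \ref{K_ineq} does not directly bound $F(t^0)=\big(G_{\nu_j}(t^0)-G_{\omega_{1/2}}(t^0)\big)_{j=1,2}$. The paper does not resolve this tension: it simply applies Lemma \ref{K_ineq} to the pairs $(\nu_j,\omega_{1/2})$, i.e.\ it tacitly uses $d_L(\nu_j,\omega_{1/2})\leq p$ --- which is what actually holds in every invocation of the proposition (Corollary \ref{an_cont}, where each factor is individually close to $\omega_{1/2}$ by Lemma \ref{ocenka_levy} or the Berry--Esseen bound), though it is not literally in the hypothesis. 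Your proposed repair --- rewriting $G_{\nu_j}(t^0)-G_\omega(z)$ via the subordination identity and ``closing the self-referential system'' through the quadratic character of Newton--Kantorovich --- is circular as described: $\eta_0$ is a \emph{hypothesis} of Theorem \ref{NK}, not a conclusion, so you cannot bound the residual by $|Z_j-t^0|$ when $|Z_j-t^0|$ is only controlled after the theorem has been applied. Making this rigorous would require an independent a priori bound on $|Z_j(z)-t^0(z)|$ on $M$ (e.g.\ a continuity/connectedness argument starting from $\Im z$ large), and none is given. Second, invertibility of $F'(t^0)$: this matrix contains $G'_{\nu_j}(t^0)$, about which Lemma \ref{sets} says nothing; the paper's lower bound $|\det F'(t^0)|\geq c\sqrt\delta$ again rests on $G'_{\nu_j}(t^0)=G'_{\omega_{1/2}}(t^0)+O(p\,\delta^{-3/2})$, i.e.\ on individual closeness, plus a nontrivial computation showing $|g(z)|\geq c\sqrt\delta$ on $M$. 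Under your own strict reading of the hypothesis this step is unavailable, and your appeal to Lemma \ref{sets} does not supply it. The honest fix is to assume $d_L(\nu_j,\omega_{1/2})\leq c\delta^2$ for $j=1,2$ (as all applications permit), after which the argument is exactly the paper's.
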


\begin{proof}
We  would like to find an interval where the density is positive. 
To this end, define a subordination function $Z_{\omega_{1/2}}(z)$ which solves the equations
$$z = 2Z_{\omega_{1/2}}(z) -   F_{\omega_{1/2}}(Z_{\omega_{1/2}}(z)) \q \mbox {and} \q
F_{\omega}(z) = F_{\omega_{1/2}}(Z_{\omega_{1/2}}(z)).$$
Solving this equations obtaining we obtain
\begin{eqnarray*}
Z_{\omega_{1/2}}(z) = \frac {3z +  \sqrt{z^2 - 4}}{4},
\end{eqnarray*}
and an analytic continuation of $Z_{\omega_{1/2}}$ to $K_\delta$ is given by
\begin{eqnarray*}
Z_{\omega_{1/2}}(z) = \frac {3z + i \sqrt{4 - z^2}}{4}.
\end{eqnarray*}
It easy to see that the following inequality holds:
$$\Im Z_{\omega_{1/2}}(x) > \sqrt {\delta}/3,\q x \in [-2 +\delta, 2 - \delta].$$
On  $\compl^2$ 
we choose the norm:
$$\|(z_1,z_2)\| = \sqrt{|z_1|^2 + |z_2|^2}.$$
Now we apply the Newton-Kantorovich Theorem (see Theorem \ref{NK}) to the equation
$F(t) = 0$ for 
$z \in M:= \{x + i y : x \in [-2 + \delta, 2 - \delta],\   0 < y < \delta \sqrt \delta \}$.
In formulas (\ref{dfgghjdgh}), (\ref{59}) and (\ref{xvcbvbnbnm}) we set $\mu_1 = \mu_2 =  \omega_{1/2}$ 
and $t_1^0 = t_2^0 = Z_{\omega_{1/2}}$. 
Since $|Z_{\omega_{1/2}}(z)| < 2$, $z \in M$, we choose the branch of $G_{\omega_{1/2}}$ such that
$
G_{\omega_{1/2}}(z) = z - i \sqrt{2 - z^2}, 
$
$|z| <2$.

1. First, we estimate $\|[F'(t^0)]^{-1}\|$.
We  computed $\det[F'(t^0)]$ above.
Moreover, due to Lemma \ref{K_ineq} with $p: = c \delta^2$ we have
$G'_{\nu_j}(t^0_j) = G'_{\omega_{1/2}}(t^0_j) + f_j(t^0_j)$, where 
$|f_j(t_j^0)| \leq \tilde c_1 p \delta^{-3/2}$ on $M$, $j=1,2$.
Hence, 
\begin{eqnarray*}
\lefteqn{
\det [F'(t^0)] } \\ &\!\!=&\!\!\!
(G^2_{\omega_{1/2}}(t^0_2) + G'_{\omega_{1/2}}(t^0_2)  + f_2(t_2^0) )( G^2_{\omega_{1/2}}(t^0_1)+ G'_{\omega_{1/2}}(t^0_1) + f_1(t_1^0)) 
 -  G^2_{\omega_{1/2}}(t^0_1)G^2_{\omega_{1/2}}(t^0_2) \\
& = & g (z) +(f_1(t_1^0)+f_2(t_1^0)) (G'_{\omega_{1/2}}(t^0_1) +  G^2_{\omega_{1/2}}(t_1^0))+f_1(t_1^0)f_2(t_1^0),
\end{eqnarray*}
where 
$$
g(z) = \left( G^2_\omega (z) + G'_{\omega_{1/2}}(Z_{\omega_{1/2}}(z))\right)^2 -\  G^4_{\omega}(z).
$$
We find that
\be*
\label{derivative}
G'_{\omega_{1/2}}(Z_{\omega_{1/2}}(z)) = 1 + \frac {iZ_{\omega_{1/2}}(z)}{\sqrt{2 - Z_{\omega_{1/2}}(z)}} =
1+\frac{3 i z-\sqrt{4-z^2}}{\sqrt{p(z)}},
\ee*
where $p(z): = 36-10 z^2-6 i z \sqrt{4-z^2}$.
The function $p(z)$ has  zeros at $\pm 3/ \sqrt 2$,
hence $|G'_{\omega_{1/2}}(Z_{\omega_{1/2}}(z))|$ is uniformly bounded on $M$.
Finally, we obtain
$$
g(z) = 
\left(1+\frac{3 i z-\sqrt{4-z^2}}{\sqrt{p(z)}}\right)%}\nonumber \\
%&\times&
\left(1+\frac{3 i z-\sqrt{4-z^2}}{\sqrt{p(z)}} +\frac{1}{2} \left(z-   i \sqrt{4-z^2}\right)^2 \right). 
$$

First  of all we estimate $|g(z)|$ on an interval $ [-2 + \delta, 2 - \delta]$.
Obviously,
$$
%g_1(x): =
 \frac{3 i x-\sqrt{4 - x^2}}{\sqrt{36 - 10 x^2 - 6 i x \sqrt{4 - x^2}}}
=\frac { 2 i x \sqrt{4 - x^2} - 3}{9 - 2x^2}
$$

and hence 
\be*
g(x)  = h_1(x)
\left(h_1(x) + x^2 - 2 - 2ix \sqrt{4 - x^2}\right),
\ee*
where 
$$h_1(x): = \frac {6 - 2 x^2 + 2 i x \sqrt{4 - x^2}}{9 - 2 x^2},\q
\left|h_1(x)\right| = 
\frac 2 {\sqrt{9 - 2x^2}} \geq \frac 2 3
$$
and
\be*
\left|h_1(x) + x^2 - 2 - 2ix \sqrt{4 - x^2}\right|
= \frac {2 \sqrt{4 - x^2}}{\sqrt {9 - 2x^2}} \geq c_1 \sqrt \delta
\ee*
for $x\in  [-2 + \delta, 2 - \delta]$. 
We conclude that
$
|g(x)| \geq c_2 \sqrt \delta,\ \  x \in  [-2 + \delta, 2 - \delta].
$

In order to estimate $|g(z)|$ on $M$ 
we expand $g(x+iy)$ with respect to $y$ at zero:
\be*
g(x+ iy) = g(x) + R(x,y), \q x \in  [-2 + \delta, 2 - \delta],\q 0 < y < \delta \sqrt \delta,
\ee*
where $R(x,y)$ is a remainder term such that 
$$|R(x,y)| \leq 
\max_{\genfrac {}{}{0pt}{}{x \in [-2 + \delta, 2 - \delta]}{0< y < \delta \sqrt \delta}} 
|g'(x+iy)| \delta \sqrt \delta.$$

We find that $g'(z) = g_1(z)/g_2(z)$, where
\be*
g_1(z) = &-& 1488%}\\
%&+& 
+4 \left(2 z^6-28 z^4+186 z^2 -  \left(z^4 - 9 z^2  - 9\right) \sqrt{(4 -  z^2)
p(z)}\right.\\
& - & \left. 2 i z \left(  z^4 - 12  z^2+ 7 \right) \sqrt{4 - z^2} - i z \left( z^4 - 11 z^2 + 39 \right) \sqrt{p(z) }\right),\q\q
\ee*
$$
g_2(z) =  i\left(p(z)/2\right)^2\sqrt{4 - z^2}.$$
We conclude that $|g'(z)| \leq c_1 / \sqrt \delta$, $z \in  M$. 
Hence $|R(x,y)| \leq c_1 \delta$ and
 $|g(z)| \geq c_2 \sqrt \delta$, $z \in M$.
Then 
$|\det [F'(t^0)]| \geq ||g(z)| -  c_3 p \delta^{-3/2}| \geq \sqrt \delta (c_2 - c_3 p \delta^{-2})$.
We can choose $c$ in $p = c \delta^2$ such that
$\|[F'(t^0)]^{-1}\| \leq c_4 \delta^{-1/2}=: \beta_0$, $z \in M$.

2. We  estimate $\|[F'(t^0)]^{-1}F(t^0)\|$. Due to Lemma \ref{K_ineq} we arrive at
\begin{eqnarray*}
\|[F'(t^0)]^{-1}F(t^0)\|  < 
c_1  p \delta^{-3/2} = : \eta_0,\q z \in M.
\end{eqnarray*}

3. At last, we estimate $\|F''(t^*)\|$, where $t^* = (t_1^*, t_2^*)$ such that $\|t^* - t^0\| \leq 2 \eta_0$.
Note $2\eta_0 < \sqrt \delta /3$, guarantees  that
$\Im t^*_j(z)>0$, $z \in M$, $j=1,2$. 
Furthermore, note that  $|G_{\omega_{1/2}}(z) |\leq \sqrt 2$ for  $z \in \compl^+ \cup \real$.
Due to Lemma \ref{K_ineq} the following estimate holds:
$$
\|F''(t^0)\| \leq \max\{|G''_{\nu_j}(Z_{\omega_{1/2}}) - 2G^3_{\omega_{1/2}}(Z_{\omega_{1/2}})|,\ 2 |G^3_{\omega_{1/2}}(Z_{\omega_{1/2}})|,\ j=1,2\}.
$$
 Lemma \ref{K_ineq} implies
$$
G''_{\nu_j}(Z_{\omega_{1/2}}) = G''_{\omega_{1/2}}(Z_{\omega_{1/2}}) + f(Z_{\omega_{1/2}}),\q
j=1,2,
$$
where $|f(Z_{\omega_{1/2}})| \leq \tilde c_2 p \delta^{-2}$ on $M$.
Let us estimate $G''_{\omega_{1/2}}(Z_{\omega_{1/2}})$ on $M$. We find that
\be*
G''_{\omega_{1/2}}(Z_{\omega_{1/2}}(z)) 
&\!\! = \!\!&
2i(2 - Z^2_{\omega_{1/2}}(z))^{-3/2} = 
2i\left(\!2+ \frac {( \sqrt{4 - z^2} -3iz)^2} {16} \!\right)^{\!-3/2}\\
&\!\! = \!\! &\frac i {4  (p(z))^{3/2}}=
 \frac {i(3\sqrt{4-z^2} + iz)^3} {256  (9 - 2z^2) ^{3}}.
\ee*
Then  the bound 
$|G''_{\omega_{1/2}}(Z_{\omega_{1/2}}(z))| \leq c_2 $ holds on $M$. 
Choosing $p = c \delta^2$
we conclude that
$
\|F''(t^0)\|  \leq c_3 =: K_0.
$

The function $(z - t_1^*(z) - t_2^*(z))^{-3}$ is continuous for $z \in M$ because $\Im t^*_j(z)>0$, $j=1,2$. It follows that the  estimate for the second derivative holds for $t^*$ such that $\|t^* - t^0\|< 2\eta_0$, $z \in M$.

%\noindent
%----------------------------------------------------

The Newton-Kantorovich Theorem (see Theorem  \ref{NK}) yields us that if $\beta_0$, $\eta_0$ and $K_0$  satisfy the inequality
$h_0 := \beta_0 \eta_0 K_0 \leq 1/2$,  then the equation $F(t) = 0$ has the unique solution $(Z_1(z), Z_2(z))$ 
in a ball 
$$B_0: = \left\{t \in \compl^2: \|t - t^0\| \leq \frac {1 - \sqrt{1 - 2 h_0}}{h_0}\eta_0\right \}.$$
It means that
$$|Z_{\omega_{1/2}}(z) - Z_j(z)| \leq  \frac {1 - \sqrt{1 - 2h_0}}{h_0} \eta_0 
= c_4 p  \delta^{-3/2}, \ \ j=1,2,\ \ z \in M.
$$
Finally, we derive the following bound for the Cauchy transform
\begin{eqnarray*}
\left|\frac 1 {z - 2 Z_{\omega_{1/2}}(z)} - \frac 1 {z -  Z_1(z) -  Z_2(z)} \right|
< \frac {2|G^2_\omega(z)| c_5 p \delta^{-3/2}}{|1 - 2 c_5 p \delta^{-3/2}|G^2_\omega(z)||},
\end{eqnarray*}
\begin{equation}
\label{inequality}
|G_\omega(z) - G_{\nu_1 \conv \nu_2}(z)|< c_6 p \delta^{-3/2},\q z \in M.
\end{equation}
Due to Theorem \ref {Belinschi} the limits 
$Z_j (x): = \lim_{y\downarrow 0}Z_j(x+i y)$,  $x \in [-2 + \delta, 2 - \delta]$, $j=1,2$  exist.
Hence
the limit 
$G_{\nu_1 \conv \nu_2}(x) : =\lim_{y\downarrow 0}G_{\nu_1 \conv \nu_2}(x+i y)$ also exists and 
from (\ref{inequality}) the estimate follows: 
$$
|G_\omega(x) - G_{\nu_1 \conv \nu_2}(x)| \leq c_6 p \delta^{-3/2},\q x \in [-2 + \delta, 2 - \delta].
$$
Hence we conclude 
$$|p_\omega(x) - p_{\nu_1 \conv \nu_2}(x)| \leq  c_7 p \delta^{-3/2},\q x \in [- 2 + \delta, 2 - \delta].$$
It easy to see  
$p_\omega(x) >\sqrt {\delta}/\pi$ on $[- 2 + \delta,\ 2 - \delta]$.
If we choose $p$ such that   $c_7 p \delta^{-2}< 1/2\pi$, 
then  $p_{\nu_1 \conv \nu_2}(x)> 0$ on $[- 2 + \delta,\ 2 - \delta]$.
Analyticity of $p_{\nu_1 \conv \nu_2}$ follows from Theorem  \ref{Belinschi}.
\end{proof}

%\newpage

\begin{corollary}
\label{an_cont}
For every $\delta \in (0,1/10)$ and $m \geq n \geq c(\mu^*,s) \est$ 
the following measures have a positive and analytic density:
1) $\omega \conv \mu_s^{(\unve_s)}$,
2) $\mu_n$,
3) $\muves$.
Moreover,  Cauchy transforms $G_{\omega \conv \mu_s^{(\unve_s)}}$, 
$G_{\mu^*_n}$ and $G_{\muves}$ extend analytically to a neighbourhood  of $[- 2 + \delta, 2 - \delta]$.
\end{corollary}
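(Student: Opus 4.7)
The plan is to reduce each of the three positivity claims to a direct application of Proposition~\ref{bla-bla}. In each case I decompose the measure as a free convolution $\nu_1 \conv \nu_2$ with neither factor a point mass, and then verify the hypothesis $d_L(\omega,\nu_1 \conv \nu_2) \leq c \delta^2$ using the tools already built up: Lemma~\ref{ocenka_levy}, the Berry--Esseen inequality $(\ref{b-e})$, and the Levy-continuity statement Theorem~\ref{levy_dist_in}. Once positivity and analyticity of the density on $[-2+\delta, 2-\delta]$ are secured, the analytic extension of the corresponding Cauchy transform to a neighbourhood of that interval is immediate from Theorem~\ref{Belinschi}(2) via the identity $G = 1/F$, noting that $F$ cannot vanish near a point where the density is positive and finite.

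For claim (1), take $\nu_1 = \omega$ and $\nu_2 = \mu_s^{(\unve_s)}$; if the degenerate case $\mu_s^{(\unve_s)} = \delta_0$ occurs, rewrite $\omega = \omega_{1/2} \conv \omega_{1/2}$ and regroup so that neither factor is a point mass. Lemma~\ref{ocenka_levy} applied with $\nu = \omega$ yields $d_L(\omega, \omega \conv \mu_s^{(\unve_s)}) \leq 2 s n^{-1/3}$, which is at most $c\delta^2$ as soon as $n \geq c(\mu^*, s)\est$ with $c(\mu^*, s)$ chosen sufficiently large.

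For claim (2), with $\mu_n$ read as the truncated $\mu_n^*$ of the current setup, split the free sum defining $Y_n^*$ into two halves to write $\mu_n^* = D_{1/\sqrt 2}\mu^*_{\lfloor n/2\rfloor} \conv D_{1/\sqrt 2}\mu^*_{\lceil n/2\rceil}$, neither factor a point mass for $n$ large. The Berry--Esseen inequality $(\ref{b-e})$, together with $d_L \leq d_K$ and the uniform bound on $\beta_3^*$ established in the truncation step, gives $d_L(\omega, \mu_n^*) \leq c/\sqrt{n} \leq c\delta^3$, which comfortably satisfies the $c\delta^2$ requirement on our range of $n$.

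For claim (3), decompose $\muves = \nu_1 \conv \nu_2$ where $\nu_1$ collects the $m - s$ factors of weight $m^{-1/2}$ and $\nu_2$ collects the $2s$ factors of weights bounded by $n^{-1/2}$, and compare with $\mu_m^*$, which is the convolution of $m$ factors of weight $m^{-1/2}$. Theorem~\ref{levy_dist_in} reduces $d_L(\mu_m^*, \muves)$ to the sum of the $d_L(\delta_0, \cdot)$ of the two ``excess'' blocks (of $s$ and $2s$ factors respectively), each controlled by Lemma~\ref{ocenka_levy} to yield a total of order $s n^{-1/3}$. Adding the Berry--Esseen bound $d_L(\omega, \mu_m^*) \leq c/\sqrt{m} \leq c/\sqrt{n}$ via the triangle inequality gives $d_L(\omega, \muves) \leq C(\mu^*, s) n^{-1/3} \leq c\delta^2$ under $n \geq c(\mu^*, s)\est$. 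The only genuine obstacle is bookkeeping: one must verify that the various constants $C(\mu^*, s)$ appearing across the three cases can be absorbed into a single threshold $c(\mu^*, s)$ consistent with the statement, and handle the few degenerate configurations of $\unve$; beyond this, the analytical heavy lifting was already done in Proposition~\ref{bla-bla}.
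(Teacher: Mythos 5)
Your proposal is correct and follows essentially the same route as the paper: each case is reduced to Proposition~\ref{bla-bla} by verifying $d_L(\omega,\cdot)\leq c\delta^2$ via Lemma~\ref{ocenka_levy}, the Berry--Esseen inequality (\ref{b-e}) and Theorem~\ref{levy_dist_in}, with the analytic continuation of the Cauchy transforms supplied by Theorem~\ref{Belinschi}. The only additions beyond the paper's argument are the explicit non-point-mass decompositions and the handling of degenerate $\unve$, which the paper leaves implicit.
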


\begin{proof}
1) Due to Lemma \ref {ocenka_levy} the following bound holds:
$$
d_L(\omega,\omega \conv \mu_s^{(\unve_s)}) \leq 2s n^{-1/3}.
$$
By Proposition \ref{bla-bla}  the density $p_{\omega \conv \mu_s^{(\unve_s)}}(x)$
is positive and analytic on $[- 2 + \delta, 2 - \delta]$ for $n \geq c(\mu^*,s) \est$. 

2) By the Berry-Esseen inequality (\ref{b-e})
$
d_L(\omega, \mu^*_m) \leq c(\mu^*)/ \sqrt m.
$
By Proposition \ref{bla-bla}  the density $p_{\mu^*_m}(x)$
is positive and analytic on $[- 2 + \delta, 2 - \delta]$, for $m \geq c(\mu^*) \est$.

3) By the Berry-Esseen inequality (\ref{b-e})
$$
d_L(\omega,\muves) \leq \frac {c_1(\mu^*)} {\sqrt m} + \frac {c_2(\mu^*,s)} { n^{1/3}} 
\leq \frac {c_3(\mu^*,s)}{n^{1/3}}.
$$
By Proposition \ref{bla-bla}   the density $p_{\muves}(x)$
is positive and analytic on $[- 2 + \delta, 2 - \delta]$ for $m \geq n \geq c(\mu^*,s) \est$.

Analyticity of the Cauchy transforms follows from Theorem \ref{Belinschi}.
\end{proof}

%\newpage

\subsection{Analytic continuation for $G_{\muves}$.}
Below we prove Theorem \ref{extension_33} which shows that the Cauchy transform $G_{\muves}$ 
has an analytic continuation on
$$K := \{x+iy:x \in [-2 + 2 \delta, 2 - 2\delta]; |y| < \delta \sqrt \delta \}.$$
%The idea of the proof is due to Wang  \cite{W10}.

%---------------------------

\begin{theorem}
\label{extension_33}
%Let $\mu$ be a compactly supported measure on $\real$ with $\mbox{supp}(\mu) \subset [-L,L]$, zero mean and unit variance. 
For every $\delta \in (0,1/10)$ and $m \geq n \geq N(: = c (\mu^*,s) \est$) the Cauchy transform 
$G_{\muves}$ has an analytic continuation on $K$
such that
\begin{equation}
\label{lskjdf}
G_{\muves}(z) = G_{\omega} (z) + \tilde l (z),
\end{equation}
where 
%$|\tilde l (z)| \leq \left(\frac {c_1(s)}{\sqrt m} + \frac {c_2(s)} n \right)\frac 1 {\sqrt \delta}$ 
$|\tilde l (z)| \leq \frac {c(s)}{\sqrt {n  \delta}}$ on $K$.
\end{theorem}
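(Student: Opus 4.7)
The plan is to prove Theorem~\ref{extension_33} by a Newton--Kantorovich argument on the subordination system, mirroring the scheme used in Proposition~\ref{bla-bla} but with a sharper control of the ``initial residual''. First I would split $\muves$ into two nearly balanced halves,
$$
\muves = \pi_1 \boxplus \pi_2,
$$
where $\pi_j$ collects roughly $(m-s)/2$ of the ``large'' components (those with weight $m^{-1/2}$) together with $s$ of the ``small'' components (those with $|\ve_j|\leq n^{-1/2}$). Each $\pi_j$ has variance close to $1/2$, and by the Berry--Esseen inequality \eqref{b-e} applied to its large part plus Lemma~\ref{ocenka_levy} for the small part, $\pi_j$ is close to the semicircle $\omega_{1/2}$ in Levy distance. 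Since $\omega_{1/2}\boxplus\omega_{1/2}=\omega$, this choice allows us to start the Newton iteration from an explicit approximate subordination function.

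Next I would write the subordination system \eqref{system_eq2} for the pair $(\pi_1,\pi_2)$ in the form $F(t)=0$, and take as initial approximation $t^0_1=t^0_2=Z_{\omega_{1/2}}(z)=(3z+i\sqrt{4-z^2})/4$, the subordination function associated with $\omega_{1/2}\boxplus\omega_{1/2}=\omega$. By the computation in the proof of Proposition~\ref{bla-bla}, on $M:=K\cap\compl^+$ the function $Z_{\omega_{1/2}}$ satisfies $\Im Z_{\omega_{1/2}}(z)\geq\sqrt{\delta}/3$, and the operator bounds
$$
\|[F'(t^0)]^{-1}\|\leq c\,\delta^{-1/2},\qquad \sup_{\|t^*-t^0\|\leq 2\eta_0}\|F''(t^*)\|\leq c
$$
carry over without change, because those estimates only rely on the structural expressions \eqref{dfgghjdgh}, \eqref{xvcbvbnbnm} for $F'$ and $F''$, together with the smallness of $|G_{\pi_j}-G_{\omega_{1/2}}|$ (which we check below) and the analytic formula for $G'_{\omega_{1/2}}(Z_{\omega_{1/2}})$.

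The key new estimate is the bound on $\|F(t^0)\| = \max_j|G_{\pi_j}(Z_{\omega_{1/2}}(z))-G_{\omega_{1/2}}(Z_{\omega_{1/2}}(z))|$. To reach the sharp rate $c(s)/\sqrt{n\delta}$ I would \emph{not} go through Levy distance (which would only yield $n^{-1/3}$ via Lemma~\ref{ocenka_levy}), but decompose $\pi_j=\pi_j^{\mathrm{big}}\boxplus\pi_j^{\mathrm{small}}$ and estimate the two contributions separately. For $\pi_j^{\mathrm{big}}$, the Berry--Esseen inequality \eqref{b-e} gives $d_K(\pi_j^{\mathrm{big}},\omega_{1/2})=O(1/\sqrt m)$, and Lemma~\ref{K_ineq}, evaluated at $w=Z_{\omega_{1/2}}(z)$ with $\Im w\geq\sqrt{\delta}/3$, yields $|G_{\pi_j^{\mathrm{big}}}(w)-G_{\omega_{1/2}}(w)|\leq c(\mu^*)/\sqrt{m\delta}$. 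For $\pi_j^{\mathrm{small}}$, I would use the moment expansion $G_{D_{\ve_k}\mu^*}(z)=1/z+\sum_{l\geq 2}\ve_k^l m_l^* z^{-l-1}$ (valid since $\mbox{supp}\,\mu^*\subseteq[-n^{1/3},n^{1/3}]$ and $|\ve_k|\leq n^{-1/2}$) together with the subordination for $\pi_j^{\mathrm{big}}\boxplus\pi_j^{\mathrm{small}}$ to show that the small components perturb the Cauchy transform at $w=Z_{\omega_{1/2}}(z)$ only by $O(s/(n\sqrt\delta))$. Combining and using $m\geq n$ gives $\|F(t^0)\|\leq c(s)/\sqrt{n\delta}$, hence $\eta_0=\|[F'(t^0)]^{-1}F(t^0)\|\leq c(s)/(n\delta)$, and the Kantorovich condition $\beta_0\eta_0K_0\leq 1/2$ is satisfied for $n\geq c(\mu^*,s)\est$.

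Finally, the Newton--Kantorovich theorem (Theorem~\ref{NK}) produces a unique solution $(Z_1(z),Z_2(z))$ in a ball of radius $O(\eta_0)$ around $t^0$, which by uniqueness of subordination coincides with the true subordination pair for $\pi_1\boxplus\pi_2=\muves$. The resulting bound $|Z_j(z)-Z_{\omega_{1/2}}(z)|\leq c(s)/(n\delta)$ is then transferred to the Cauchy transform via $F_{\muves}(z)=(z-Z_1(z)-Z_2(z))^{-1}$, yielding an analytic continuation of $G_{\muves}$ to $M=K\cap\compl^+$ with $|G_{\muves}(z)-G_\omega(z)|\leq c(s)/\sqrt{n\delta}$. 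Analytic continuation to $K\cap\compl^-$ follows by Schwarz reflection across the interval $[-2+\delta,2-\delta]$, where by Corollary~\ref{an_cont} the density $p_{\muves}$ is positive and analytic, so that the extensions from the two half-planes match. The main obstacle is precisely the sharp estimate of $\|F(t^0)\|$: one must extract the $n^{-1/2}$ rate for the large components from Berry--Esseen combined with Lemma~\ref{K_ineq}, and simultaneously the effective $n^{-1}$ rate for the small ones from their vanishing first two moments, rather than routing through the cruder Levy-distance bound of Corollary~\ref{an_cont}.
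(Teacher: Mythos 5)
Your route is genuinely different from the paper's: the paper does not touch the subordination system at all for this theorem. It writes down the inverse function $G^{(-1)}_{\muves}(w)=\sum_j R_{D_{\ve_j}\mu^*}(w)+(m-s)R_{D_{1/\sqrt m}\mu^*}(w)+1/w$, bounds the cumulant series on $G_\omega(K_\delta)\subset D_{\theta,1.4}$ to get $G^{(-1)}_{\muves}(G_\omega(z))=z+g(z)$ with $|g|\leq c(s)/\sqrt n$, inverts by Rouch\'e, identifies the result with $G_{\muves}$ on the real interval via Corollary \ref{an_cont}, and picks up exactly one factor $\sup_K|G'_\omega|\leq(2\sqrt\delta)^{-1}$. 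The Newton--Kantorovich machinery (Proposition \ref{bla-bla}) is used in the paper only to establish positivity/analyticity of the density, with the crude Levy-distance input, precisely because it cannot deliver the sharp rate.

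There are three concrete gaps in your version. First, the crux --- the contribution of the $2s$ small components to $\|F(t^0)\|$ --- is asserted, not proved. A moment expansion of $G_{D_{\ve_k}\mu^*}$ tells you the $R$-transform (equivalently the inverse function) of $\pi_j^{\mathrm{small}}$ is an $O(s/n)$ perturbation of $1/w$, but converting that into a bound on $G_{\pi_j}(w)-G_{\pi_j^{\mathrm{big}}}(w)$ at a fixed point $w=Z_{\omega_{1/2}}(z)$ requires an inversion/implicit-function argument of its own; carried out honestly, this step is essentially the paper's entire proof, so your scheme does not avoid it. Second, the quantitative bookkeeping does not reach the stated bound: Lemma \ref{K_ineq} with $\Im w\geq\sqrt\delta/3<1$ gives $|G_{\pi_j^{\mathrm{big}}}(w)-G_{\omega_{1/2}}(w)|\leq c\,m^{-1/2}\delta^{-1}$ (not $(m\delta)^{-1/2}$), and after multiplying by $\beta_0\leq c\delta^{-1/2}$ you get $\eta_0\leq c(s)n^{-1/2}\delta^{-3/2}$ and hence $|\tilde l(z)|\leq c(s)n^{-1/2}\delta^{-3/2}$ rather than $c(s)(n\delta)^{-1/2}$; also your intermediate claim $\eta_0\leq c(s)/(n\delta)$ is inconsistent with your own inputs. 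The weaker bound may still survive the downstream uses (which only need $|\tilde l|\ll\delta$ for $n\geq c\est$), but it does not prove the theorem as stated. Third, the extension to $K\cap\compl^-$ cannot be done by Schwarz reflection: on $[-2+\delta,2-\delta]$ the density $p_{\muves}$ is \emph{positive}, so $\Im G_{\muves}\neq 0$ there and the reflection $\overline{G_{\muves}(\bar z)}$ is a different function from the analytic continuation. What you need is Belinschi's continuation (Theorem \ref{Belinschi} via Corollary \ref{an_cont}) through the interval, plus a separate argument that the bound on $\tilde l$ persists in the lower half of $K$ --- which is exactly what the paper's Rouch\'e construction on the full rectangle provides for free.
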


%--------------------

\begin{proof}
The inverse function of $G_{\muves}$ can be expressed as
$$
G^{(-1)}_{\muves}(w) 
= \sum_{j=1}^{2s} R_{D_{\ve_j} \mu^*}(w) + 
(m - s) R_{D_{1/\sqrt m}\mu^*}(w)
 + \frac 1 w,
$$
for  $w$, such that the series $R_{D_{\ve_j} \mu^*}(w)$ and 
$(m - s)R_{D_{1/\sqrt m}\mu^*}(w)$
converge.
Due to the rescaling property of the $R$-transform (\ref {dilation})
we have
\begin{eqnarray*}
(m - s)R_{D_{1/\sqrt m}\mu^*}(w) =
w - \frac {s w}{m} + 
(m - s) \left (\sum_{l=2}^{7} \frac {\kappa^*_{l+1} w^l}{m^{(l+1)/2}}
+ \sum_{l=8}^{\infty}\kappa'_{l+1}w^l \right),
\end{eqnarray*}
where $\kappa_l^*$ and $\kappa'_l$ are free cumulants of $\mu^*$ and $D_{1/\sqrt m}\mu^*$ respectively.
For $w \in D_{\theta,1.4}$  (see Lemma \ref {sets}) 
by  inequalities (\ref{cum}) we obtain the estimate:

\begin{eqnarray*}
\left |\sum_{l=2}^{7} \frac {\kappa^*_{l+1} w^l}{m^{(l+1)/2}} +  \sum_{l=8}^{\infty} \kappa'_{l+1}  w^{l} \right|  & \leq & \frac { c(s)} {m^{3/2}}.
%\frac {16 L^3 |w|^2}{m - 4L|w| \sqrt m}.
\end{eqnarray*}
%We can choose $m$ $(\geq N)$ such that
%$
%\frac {16 L^3 |w|^2}{m - 4L|w| \sqrt m} \leq \frac {1} {m}.
%$
Hence
$
(m - s)R_{D_{1/\sqrt m}\mu^*}(w) = w + g_1(w),
$
where $|g_1(w)| \leq c_1(s)/\sqrt m$ on $D_{\theta,1.4}$, $m \geq N$.

%---------------------------

In the same way we obtain the estimate:

\begin{eqnarray*}
\left| \sum_{j=1}^{2s}  R_{D_{\ve_j}\mu^*}(w)\right|
%& \leq & \left| \sum_{j=1}^{2s} \ve_j \sum_{l = 1}^\infty \kappa_{l+1}(\ve_j w)^l\right|\\
%& \leq & \sum_{j=1}^{2s} |\ve_j|^2 |w| +  
%\sum_{j=1}^{2s} |\ve_j| \sum_{l = 2}^\infty |\kappa_{l+1}|(|\ve_j| |w|)^l\\
%& \leq & \sum_{j=1}^{2s} |\ve_j|^2 |w| + 
%\sum_{j=1}^{2s} |\ve_j| \frac {32 L^3 |\ve_j|^2 |w|^2} {1 - 4L |\ve_j| |w|}. 
\leq \frac {c_2(s)} n, \q w \in D_{\theta,1.4}.
\end{eqnarray*}
%We can choose $n$ such that 
%$$
% \frac {32 L^3 |\ve_j| |w|} {1 - 4L |\ve_j| |w|} \leq 1,\q w \in D_{\theta,1.4},
%$$
%which leads to the estimate
%\begin{eqnarray*}
%\left| \sum_{j=1}^{2s} \ve_j R_{\mu}(\ve_j w)\right|\leq  \sum_{j=1}^{2s} 2 |\ve_j|^2 |w| \leq 
%\frac {c_2(s)} n, \q w \in D_{\theta,1.4}.
%\end{eqnarray*}

%\noindent --------------------------------------------------

Due to Lemma \ref{sets} we know  $G_{\omega}(K_{\delta}) \subset D_{\theta,1.4}$. 
Thus   replacing  $w$ by $G_\omega$   the we get in the view of the functional equation (\ref{func}) 
\begin{equation}
\label{lskdjfhg}
f(z) := G^{(-1)}_{\muves}(G_\omega(z)) = z + g(z), \quad z \in K_{\delta},
\end{equation}
where $g(z)$ considered as a power series in $z$ 
$$
g(z) = \sum_{j=1}^{2s}  R_{D_{\ve_j}\mu^*}(G_\omega(z)) + 
(m - s)  R_{D_{1/\sqrt m}\mu}(G_{\omega}(z))
$$
converges uniformly on $K_{\delta}$ to zero as $n \to \infty$ and the estimate 

%\noindent --------------------------------------------------------
$$
|g(z)| \leq \frac {c(s)} {\sqrt n} %+ \frac {c_2(s)} {n} 
$$
 holds uniformly on $K_{\delta}$ for $m \geq n \geq N$.
The uniform bound of $|g(z)|$ and (\ref{lskdjfhg}) imply that the rectangle 
$K$ 
is contained in the set $f(K_{\delta})$. 
Rouch\'{e}'s Theorem 
 implies that each 
function $f$ has an analytic inverse $f^{(-1)}$ defined on $K$.
Due to (\ref{lskdjfhg}) it follows that
$$
z = f\left(f^{(-1)}(z)\right) = f^{(-1)}(z) + g\left(f^{(-1)}(z)\right)
$$
$$
f^{(-1)}(z) = z - \widetilde{g}(z), \quad z \in K, 
$$
where $\widetilde{g}(z) = - g\left(f^{(-1)}(z)\right)$, $f^{(-1)}(z)  \in K_\delta$ 
for $z \in K$, hence 
$$
|\widetilde{g}(z)| \leq  \frac {c(s)} {\sqrt n} %+ \frac {c_2(s)} {n}
,\ \  z \in K,\ \ m \geq n \geq N.
$$

By Corollary \ref{an_cont} the function $G_{\muves}$ has an analytic continuation to the interval
$[-2 +  \delta, 2 -   \delta]$ for $m \geq n \geq N$. 
The composition $G_\omega^{(-1)}\circ G_{\muves}$ is defined and  analytic in a neighbourhood  of the interval 
$[-2 + \delta, 2 -  \delta]$ and hence, it coincides with the function 
$f^{(-1)}$ on  $[-2 + 2 \delta,\  2 - 2 \delta]$. 
We conclude
\begin{equation}
\label{invgg}
G_\omega^{(-1)}( G_{\muves}(z)) = f^{(-1)}(z) = z + \widetilde{g}(z), 
\quad z \in K, \quad m \geq n \geq N.
\end{equation}
Let us estimate $|G'_{\omega}(z)|$ on $K$. It is easy to see 
\begin{eqnarray*}
\label{411}
|G'_{\omega}(z)|= \left|\frac 1 2 + \frac {i z} {2 \sqrt {4 - z^2}}\right |
\leq \left | \frac 1 2 + \frac {i(2 - i \delta \sqrt \delta)}{4 \sqrt {2\delta}}\right| \leq \frac 1{2 \sqrt \delta},\q
z \in K.
\end{eqnarray*}
Applying  $G_\omega$ on (\ref{invgg}), we get
\begin{eqnarray*}
G_{\muves}(z) = G_\omega(z + \widetilde{g}(z)) = G_\omega(z) +\tilde l(z),
 \quad z \in K, \q m \geq n \geq N,
\end{eqnarray*}
where 
$$
|\tilde l(z)| \leq \sup_{z \in K} |G^\prime_\omega(z)| |\widetilde{g}(z)|
\leq   \frac {c(s)} {\sqrt {n  \delta}},\ \  z \in K,\ \  m\geq  n\geq N.
$$
Thus the theorem is proved.
\end{proof}

Recall, that in our case $\mu^* = \mu$.
\begin{proof}[Proof of Corollary \ref {extension_g}]
The  statement follows from Theorem \ref{extension_33} with $m=n$ and $s =0$. 
\end{proof}

\begin{proof}[Proof of Corollary \ref {lklklk}]
In Theorem \ref{extension_33} we put $g_1(z) =0$, thus the corollary  is proved.
\end{proof}

\begin{proof}[Proof of Corollary \ref {dfdfdfdfd}]
Combining Corollary \ref {lklklk} and Theorem \ref{extension_33} we obtain the statement.
\end{proof}

\begin{proof}[Proof of Corollary \ref {sym_comp}]
The function
$G^{(-1)}_{\muves}(w)$, $w \in D_{\theta,1.4}$ is symmetric and compatible. 
Hence by (\ref {lskdjfhg}) and (\ref {invgg}) we may conclude that $G_{\muves}(z)$, $z \in K$ is symmetric and compatible.
\end{proof}

\subsection{Proofs of Theorem  \ref{asdfasdf} and Theorem  \ref{vanish_dir}. }

The results obtained so far allow us to prove
 Theorem \ref{asdfasdf}.
\begin{proof}[Proof of Theorem \ref{asdfasdf}]
Let us define the set
$$U_0: = \{\underline \eta_{2s} \in \compl^{2s}:\ |\eta_j| \leq 1/\sqrt n, \ j = 1,\dots, 2s\}$$ and  the function
\begin{eqnarray*}
G^{(-1)}(\underline \eta_{2s},w)  =  
w + \frac 1 w - \frac {s w} m
+ 
(m - s)  \sum_{l=2}^\infty \kappa'_{l+1} w^l +
\sum_{j=1}^{2s} \sqrt n\eta_j \sum_{l=1}^\infty \kappa''_{l+1}(\sqrt n \eta_j w)^l, 
\end{eqnarray*}
where $\kappa'_l$ and $\kappa''_l$ are free cumulants of $D_{1/\sqrt m}\mu^*$ and $D_{1/\sqrt n}\mu^*$ respectively, and
 $ w\in D_{\theta, 1.4}$,  $\underline \eta_{2s} \in U_0$,
such that
$$
G^{(-1)}(\underline \eta_{2s},w)\Big|_{\underline \eta_{2s} = \unve_{2s}} = G^{(-1)}_{\muves}(w).
$$
The function 
$G^{(-1)}(\underline \eta_{2s},w)$ is analytic on $U_0\times D_{\theta, 1.4}$.
Consider  the function
$$
F(\underline \eta_{2s},z,w)  = 
G^{(-1)}(\underline \eta_{2s},w) - z,
$$
for
$w \in D_{\theta, 1.4}$, $z \in G^{(-1)}(\underline \eta_{2s}, D_{\theta,1.4})$ and $\underline \eta_{2s} \in U_0$.

This function is analytic on $ U_0 \times  G^{(-1)}(\underline \eta_{2s}, D_{\theta,1.4}) \times D_{\theta, 1.4}$.
For fixed $\unve^0_{2s} \in \real^{2s} \cap U_0$, $w_0 \in D_{\theta,1.4}$  
and fixed $z_0 = G^{(-1)}(\unve^0_{2s},w_0) \in G^{(-1)}(\unve^0_{2s},D_{\theta,1.4})$
we have
$
F(\unve_{2s}^0,z_0,w_0) =
0 \q \mbox{and}
$
\begin{eqnarray*}
\lefteqn{
\frac {\partial}{\partial w}F(\unve^0_{2s},z_0,w_0)} \\ &\!\!\! = &\!\!
1 - \frac 1 {w^2_0} - \frac {s} m
+  (m - s) 
\sum_{l=2}^\infty l \kappa'_{l+1}w_0^{l-1}  +
\sum_{j=1}^{2s} (\ve^0_j)^2 n \sum_{l=1}^\infty l \kappa''_{l+1}(\ve^0_j \sqrt n w_0)^{l-1}\!. 
\end{eqnarray*}
Using the estimates $|w_0^2 - 1| > \sin^2 \theta > \delta/16$ on $D_{\theta, 1.4}$
and 
$$
\left|(m - s) \sum_{l=2}^\infty l \kappa'_{l+1}w_0^{l-1} +
\sum_{j=1}^{2s} (\ve^0_j)^2 n \sum_{l=1}^\infty l \kappa''_{l+1}(\ve^0_j \sqrt n  w_0)^{l-1} - \frac s m \right|
\leq \frac c {\sqrt n},
$$
we conclude
$$
\left|\frac {\partial}{\partial w}F(\unve^0_{2s},z_0,w_0) \right| > c \delta >0.
$$
Due to the Implicit Function Theorem  \cite{FG02}
for every point $(\unve_{2s}^0,z_0,w_0)$ 
  there is an open neighbourhood 
$U = \tilde U_0 \times U_{z_0}\times U_{w_0} \subset U_0 \times  G^{(-1)}(\unve^0_{2s}, D_{\theta,1.4}) \times D_{\theta, 1.4}$ 
and  an analytic function 
$G: \tilde U_0 \times U_{z_0} \to U_{w_0}$
such that $G(\underline \eta_{2s}, z; \unve_{2s}^0,z_0) = w_0$.
Moreover, 
$$
G(\underline \eta_{2s},z;\unve_{2s}^0, z_0)\Big|_{\underline \eta_{2s} = \unve_{2s}} = G_{\muves}(z),\ \ z_0 \in K \subset G^{(-1)}(\unve^0_{2s}, D_{\theta,1.4}).
$$
Note, that for $z_0^1 \neq z_0^2$, $z \in  U_{z_0^1} \cap U_{z_0^2}$
and $\unve_{2s}^{0,1} \neq \unve_{2s}^{0,2}$, 
$\underline \eta_{2s} \in U_{\unve_{2s}^{0,1}} \cap U_{\unve_{2s}^{0,2}}$
the functions $G(\underline \eta_{2s},z;\unve_{2s}^{0,1},z^1_0)$ and  
$G(\underline \eta_{2s},z;\unve_{2s}^{0,2},z^2_0)$ do not necessarily coincide, 
however
$$
G(\unve_{2s},z;\unve_{2s}^{0,1},z^1_0) = G(\unve_{2s},z;\unve_{2s}^{0,2},z^2_0) = G_{\muves}(z),\q z_0^1,z_0^2 \in K,
$$
since $G_{\muves}(z)$ is uniquely defined for $z \in K$ by Corollary \ref{lklklk}.
We conclude that 
$G_{\muves}(z)$ is real analytic with respect to the $2s$ variables $\ve_j$ such that 
$|\ve_j|  \leq n^{-1/2}$  and complex analytic with respect to $z \in K$
for $m \geq n \geq N$.

%\noindent --------------------------------

Moreover, $|G(\underline \eta_{2s},z,\unve_{2s}^0, z_0)|$ is uniformly bounded in a neighbourhood of 
$(\unve_{2s}^0, z_0)$, $\unve_{2s}^0 \in \real^{2s} \cap U_0$, $z_0 \in K$, $n \geq m \geq N$.
Therefore, $|G_{\muves}(z)|$  is uniformly bounded on $\set \times K$.
\end{proof}

\begin{proof}[Proof of Theorem \ref{vanish_dir}]
Consider the rescaled measures
$$
\tilde \mu_{m - s} :=\underbrace{D_{m^{-1/2}}\mu^*\conv  \dots \conv D_{m^{-1/2}}\mu^*}_{m - s\ times}.
$$
Let us calculate $\frac \partial {\partial \ve_j}G_{\muves}(z)$ at $\ve_j=0$, $j = 1, \dots, 2s$ for  $z\in K$, $m \geq n \geq N$.
For this purpose, we differentiate the
equation 
%\begin{eqnarray*}
$$
z = R_{\muves} (G_{\muves}(z)) + \frac 1 {G_{\muves}(z)},
%\end{eqnarray*}
$$
and arrive at
\begin{eqnarray}
\label{kfjdhs}
0& = &\left[ R'_{\tilde \mu_{m - s}} (G_{\muves}(z)) \frac \partial {\partial \ve_j}G_{\muves}(z)
+R_{\mu^*}(\ve_jG_{\muves}(z))\right.\nonumber \\ 
& + & \ve_j R'_{\mu^*}(\ve_j G_{\muves}(z))(G_{\muves}(z) 
 +  \ve_j \frac \partial {\partial \ve_j}G_{\muves}(z))\nonumber \\
&+&
\left.
\sum_{i=1} ^{2s}{^*}\ve^2_i R'_{\mu^*} ( \ve_i G_{\muves}(z))\frac \partial {\partial \ve_j}G_{\muves}(z)
-\frac{\frac \partial {\partial \ve_j}G_{\muves}(z)} {G^2_{\muves}(z)}\right]\Bigg|_{\ve_j =0},\q\q
\end{eqnarray}
where $\sum_{i=1}^{2s}\! {^*}$ means summation  over all  $i \neq j$.
After simple computations we get
\begin{eqnarray*}
0 &= & R'_{\tilde \mu_{m - s}} (G_{\muves} (z)) \frac \partial {\partial \ve_j}G_{\muves}(z)\Big|_{\ve_j =0}
- \frac{\frac \partial {\partial \ve_j}G_{\muves}(z)} {G^2_{\muves}(z)}\Bigg|_{\ve_j  =0}\\
&+&\sum_{i=1}^{2s} {^*}\ve^2_i R'_{\mu^*} ( \ve_i G_{\muves}(z))\frac \partial {\partial \ve_j}G_{\muves}(z)\Big|_{\ve_j  =0},
\end{eqnarray*}
By the definition of the $R$-transform and taking into account that $\mu^*$ has zero mean and unit variance we obtain
\begin{eqnarray*}
 R'_{\tilde\mu_{m - s}}(z) = 
(m - s)R'_{D_{1/\sqrt m}\mu^*} (z)  =
(m - s )\left(\frac 1 m + \sum_{l=2}^\infty l \kappa'_{l+1} z ^{l - 1}\right).
\end{eqnarray*}
Finally, $\frac \partial {\partial \ve_j}G_{\muves}(z)$ satisfies the equation:
\begin{eqnarray}
\label{long_eq}
\lefteqn{
\left[
\left(m - s \right ) \left( \frac 1 m + \sum_{l=2}^\infty l \kappa'_{l+1} \left( G_{\muves}(z) \right)^{l-1}\right)
G^2_{\muves}(z) -1\right.}\\
&+&\left. G^2_{\muves}(z) \sum_{i=1}^{2s} {^*}n \ve^2_i  \sum_{l=2}^\infty l \kappa''_{l+1} \left( \sqrt n\ve_i G_{\muves}(z)  \right)^{l-1})
\right]\frac \partial {\partial \ve_j}G_{\muves}(z)\Bigg|_{\ve_j  =0} = 0. \nonumber
\end{eqnarray}
Using the representation
\begin{eqnarray*}
\label{muves}
G_{\muves}(z) = G_\omega(z) + \tilde l(z),\q z \in K,\ \  m \geq n \geq N
\end{eqnarray*}
where
$
%\label{muves_ost}
|\tilde l(z)| \leq \frac 1 {\sqrt {n \delta}},$
$ z \in K,$ $ m\geq n \geq N
$
 we rewrite equation (\ref{long_eq}) in the following way
\be*
(G^2_\omega(z) - 1 + f(z))\frac {\partial}{\partial \ve_j}G_{\muves}(z)\Big|_{\ve_j  =0} = 0,
\ee*
where 
%\be*
%f(z)&: = &2G_\omega(z)\tilde l(z) 
% + 
%\tilde l^2(z) + \frac {2s} m  \left (1 +G^2_{\muves}(z)\right) \\
%&& + \ \ G^2_{\muves}(z) \sum_{i=1}^{2s} {^*}\ve^2_i  \sum_{l=2}^\infty l \kappa_{l+1} \left( \ve_i %G_{\muves}(z)  \right)^{l-1}
%\\
%&& + \ \ G^2_{\muves}(z)  \left(1+\frac {2s} m \right) \sum_{l=2}^\infty l \kappa_{l+1} \left( \frac %{G_{\muves}(z)}{\sqrt m} \right)^{l-1}.
%\ee*
%Thus $|f(z)|$ may be bounded as
$|f(z)| \leq \frac c {\sqrt {n \delta}}$, $ z \in K,$ $ m\geq n \geq N.$
%
%---------------
%
Finally, we can find an $N^*$  such that for every $m \geq n \geq N$ 
$$
|G_\omega^2(z) - 1| > |f(z)|,\q z \in \partial K,
$$
see (\ref {ergthnj}) below.
By Rouch\'{e}'s theorem we conclude that $G^2_\omega(z) - 1 +  f(z)$ has no roots on $K$, $m \geq n \geq N$,  thus 
$\frac {\partial}{\partial \ve_j}G_{\muves}(z)\Big|_{\ve_j  =0}\! = 0$ for $z \in K$, $m \geq n \geq N$. 
%Thus the theorem is proved.
\end{proof}

%\newpage

\subsection{Proofs of Theorem \ref{exp_Cauchy}, Corollary \ref{cor5} and Corollary \ref{cor6}.}

We start by computing  the derivatives of $G_{\mutilda}$.
%
%
%
%
%---------------------------------
%
The extension  $G_{\omega\conv \mu_r^{(\unve_r)}}$ is
defined by (see  (\ref {cont41}))
\begin{eqnarray*}
z =  \sum_{i=1}^s  R_{D_{\ve_i}\mu^*}( G_{\mutilda}(z)) +
G_{\mutilda}(z)+ \frac{1}{G_{\mutilda}(z)}.
\end{eqnarray*}
In view of  the rescaling property of the $R$-transform we arrive at
\begin {eqnarray*}
z =  \sum_{i=1}^s \varepsilon_i R_{\mu^*}(\varepsilon_i G_{\mutilda}(z)) +
G_{\mutilda}(z)+\frac{1}{G_{\mutilda}(z)}.
\end{eqnarray*}
Below we will use the notation:
$
h_{\infty}(\unve_s;z): = G_{\mutilda}(z).
$
We set 
$$
F(\unve_s,z,h_{\infty}(\unve_s;z)) :=  \sum_{i=1}^s \ve_i R_{\mu}(\ve_i h_{\infty}(\unve_s;z)) +
h_{\infty}(\unve_s;z)+\frac{1}{h_{\infty}(\unve_s;z)}-z.
$$
Using these representations we may determine
 the derivatives of $h_{\infty}(\unve_s,z)$ as solutions of  the equations 
\begin{eqnarray}
\label{eq-der}
D^\alpha F(\unve_s,z,h_{\infty}(\unve_s;z)) \Big|_{\unve_s=0} = 0, \q |\alpha| \leq s.
\end{eqnarray}
Let us compute the first derivative of $h_{\infty}(\ve;z)$ at $\ve=0$, $z \in K$. Setting in (\ref{eq-der}) $\alpha = 1$  we obtain
\begin{eqnarray*}
\frac {\partial} {\partial \ve}F(\ve,z,h_{\infty}(\ve;z))\Big|_{\ve = 0} = 0.
\end{eqnarray*}
%The last equation can be rewritten in the following way
%\begin{eqnarray}
%\label{funnything}
%\lefteqn{
%\left[ R_{\mu^*}(\ve h_{\infty}(\ve;z)) 
%+ \ve R'_{\mu^*}(\ve h_{\infty}(\ve;z)) 
%\left(h_{\infty}(\ve;z) +
% \ve \frac {\partial} {\partial \ve}h_{\infty}(\ve;z)\right)\right.} \\
%&&\q\q\q\q\q\q\q\q\q\q\q
%+ \left. \frac {\partial} {\partial \ve}h_{\infty}(\ve;z)
%- \frac { \frac {\partial} {\partial \ve}h_{\infty}(\ve;z)}
%{h^2_{\infty}(\ve;z)}\right]\Bigg|_{\ve=0} = 0.\q\q\nonumber
%end{eqnarray}
After simple computations, we arrive at the  equation:
$$
 \left( 1 - \frac {1}{G^2_{\omega}(z)}\right)\frac {\partial} {\partial \ve}
h_{\infty}(\ve,z)\Big| _{\ve =0} = 0.
$$
Due to Lemma \ref{sets}, $G_{\omega} (K) \subset D_{\theta, 1.4}$, where 
$ 2 \sin \theta = \sqrt {\frac {\delta} {4} \left( 1 - \frac {\delta} {4}\right)}$. Hence
$|G^2_{\omega}(z)| \leq 1 - \delta/16$ and 
\begin{equation}
\label{ergthnj}
|G^2_{\omega}(z) -1| \geq \delta/16 > 0, \q z \in K.
\end{equation}
Thus, we get 
$\frac {\partial} {\partial \ve}h_{\infty}(\ve;z)\Big| _{\ve=0} = 0.$

Setting in (\ref{eq-der}) $\alpha = 3$ we get 
$$
\frac {\partial^3} {\partial \ve^3}F(\ve,z,h_{\infty}(\ve;z))\Big|_{\ve = 0} = 0.
$$
After differentiation  and by the inequality (\ref{ergthnj}) we obtain
%From (\ref{funnything}) it follows that
%\begin{eqnarray*}
%\frac {\partial} {\partial \ve}h_{\infty}(\ve;z) =
%\frac {R_{\mu}(\ve h_{\infty}(\ve;z)) + \ve h_{\infty}(\ve;z)
%R'_{\mu}(\ve h_{\infty}(\ve;z))}
%{ h^{-2}_{\infty}(\ve;z)
 %- \ve^2 R'_{\mu}(\ve h_{\infty}(\ve;z)) -1 }.
%\end{eqnarray*}
%Let us denote 
%\begin{eqnarray*}
%g(\ve): &=& R_{\mu}(\ve h_{\infty}(\ve;z)) + \ve h_{\infty}(\ve;z)
%R'_{\mu}(\ve h_{\infty}(\ve;z));\\
%f(\ve): &=& h^{-2}_{\infty}(\ve;z) - \ve^2 R'_{\mu}(\ve h_{\infty}(\ve;z)) -1.
%\end{eqnarray*}
%We have 
%$$
%\frac {\partial^3} {\partial \ve^3}h_{\infty}(\ve;z) \Bigg|_{\ve=0}=
%\left[\frac{2 g(\ve) (f'(\ve))^2}{f^3(\ve)}-\frac{2 f'(\ve)  g'(\ve) }{f^2(\ve)}-\frac{g(\ve) f''(\ve) }{f^2(\ve)}+\frac{g''(\ve) }{f(\ve) }\right] \Bigg|_{\ve=0}.
%$$
%It is easy to see that $g(\ve)\Big|_{\ve=0}=0$ and 
%\begin{eqnarray*}
%\lefteqn{
%g'(\ve) \Big|_{\ve = 0}
% = \left(h_{\infty}(\ve;z) + \ve \frac{\partial}{\partial \ve}h_{\infty}(\ve;z)\right) }\q\q\q\q\q\q \\
%&\times&
%\left(2 R'_{\mu}(\ve h_{\infty}(\ve;z)) + \ve h_{\infty}(\ve;z)
%R''_{\mu}(\ve h_{\infty}(\ve;z))\right)\Big|_{\ve = 0}\!\!\!\!=0.
%\end{eqnarray*}
%Finally, we see that
%$
%\frac {\partial^3} {\partial \ve^3}h_{\infty}(\ve;z) \Big|_{\ve=0}=
%\frac{g''(\ve) }{f(\ve) } \Big|_{\ve=0}.
%$
%
%In the next step we compute $g''(\ve)$ at zero that is,
%$$
%g''(\ve) \Big|_{\ve=0}=3 G^2_{\omega}(z) R''(0),
%$$
%where
%$R''(0) = 2\kappa_3$.
%Summarizing all  these relations we conclude that
\begin{eqnarray*}
\frac {\partial^3} {\partial \ve^3}h_{\infty}(\ve;z)  \Big|_{\ve=0}=
\frac  {6 \kappa_3 G^4_{\omega}(z)}{1 - G^2_{\omega}(z) }.
\end{eqnarray*}
Continuing  this scheme we obtain the desired result.

\begin{proof}[Proof of Theorem \ref{exp_Cauchy}]
In order to compute the expansion  for $G_{\mu^*_n}$ we apply Theorem  \ref {th}.
By Corollary \ref{sym_comp}
the extension  $G_{\muves}$ is symmetric  
and compatible, thus conditions (\ref{sym}), (\ref{comp}) hold.
Due to Theorem \ref{asdfasdf} the extension $G_{\muves}$ is infinitely differentiable with respect to $\unve_{2s}$, $z \in K$, $m \geq n \geq N$
and  conditions (\ref{der1}) and (\ref{der2}) hold.
Theorem \ref{vanish_dir} shows that condition (\ref{fd}) holds.
Therefore, we get an expansion together with estimates for the error term based on (\ref{expansiondf}). 
In order to determine the expansion for $G_{\mu^*_n}(z)$, $z \in K$, $n \geq N$ we need to compute the derivatives of  
$G_{\mutilda}(z)$, $z \in K$ at zero
 and plug the result into (\ref{expan_2}). 
Using the derivatives of $G_{\mutilda}(z)$
equation (\ref{expansiondf}) leads to
\begin{eqnarray}
\label{jiueyr}
\lefteqn{
G_{\mu^*_n}(z) 
 =  G_\omega (z)+ \frac {\kappa_3G_\omega ^4(z)} {(1 - G_\omega ^2(z) )\sqrt n}}\nonumber \\
&+& \left(\big(\kappa_4 - \kappa_3^2 \big)\frac {G_\omega (z)^5}{1 - G_\omega ^2(z)} +\kappa_3^2\Big(\frac{G_\omega (z)^7}{(1 - G_\omega (z)^2)^2}+\frac{G_\omega (z)^5}{(1 - G_\omega (z)^2)^3}\Big)\right) \frac 1 n \nonumber  \\
&-&\left(\frac{\kappa_5  G_\omega ^6 (z)}{( G_\omega ^2 (z) - 1)}
+ \frac{ \kappa_3^3 G_\omega ^{10}(z) \left(5 G_\omega ^4(z)  - 15 G_\omega ^2(z) + 12\right)}
{\left(G_\omega ^2 (z) - 1\right)^5}\right. \nonumber \\
& - & \left. \frac{ \kappa_3 \kappa_4 G^8_\omega (z) \left(5 G_\omega ^2(z) - 7 \right)}{\left(G_\omega ^2(z) - 1\right)^3}\right)\frac 1 {n^{3/2}}
+ O\left(\frac 1 {n^{2} }\right)
\end{eqnarray} 
for $z \in K$, $n \geq c(\mu^*) \est$.
\end{proof}

\begin{proof}[Proof of Corollary \ref {cor5}]
In order to determine the expansion for  densities we have to  
substitute the extension $G_{\omega}(z)$  by  formula (\ref{semiext})  on the left-hand side of (\ref{jiueyr})
and get the density using Stieltjes inversion formula (\ref{SIF}) by taking the imaginary part. 
\end{proof}

\begin{proof}[Proof of Corollary \ref{cor6}]
We  integrate the expansion for densities and apply inequalities (\ref{ineq1}) and (\ref{ineq2}). As a result we obtain the desired expansion for $\mu_n$.
\end{proof}

\appendix  
\section {Auxiliary results}

\begin{theorem}[\cite{Zor2}]
\label{uniform}
Consider vector spaces $X$, $Y$ over $\real$ and a sequence $\{f_n\}_n$ of functions $f_n:A \to Y$, $A \subset X$.
If all  functions $f_{n}$ are differentiable on 
$A$ and
the sequence $\{f'_n\}_n$ converges uniformly on $A$, 
and if the sequence $\{f_n\}_n$  converges at one point $x_0 \in A$,
then
$\{f_n\}_n$  converges to $f$ uniformly on A. Moreover, 
$f$ is differentiable  and 
$ f'(x) = \lim_{n \to \infty}f'_n(x)$,  $x \in A$.

\end{theorem}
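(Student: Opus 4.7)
The plan is to carry out the classical three-step argument for term-by-term differentiation of a sequence, extended from the scalar case to the vector-valued setting (assuming implicitly that $X$ and $Y$ are normed so that ``differentiable'' and ``uniform convergence'' have their usual meaning, and that $A$ has enough geometric structure --- convexity, say, or at least $x_0$-star-shapedness --- for segments $[x_0,x]$ to lie inside $A$ so that the mean-value inequality applies).

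First I would establish uniform convergence of $\{f_n\}$ on $A$ by showing the sequence is uniformly Cauchy. The key tool is the mean-value inequality applied to the difference $f_n - f_m$: for $x, y \in A$ with the segment $[x,y] \subset A$,
\[
\|(f_n - f_m)(x) - (f_n - f_m)(y)\| \leq \|x - y\|\, \sup_{\xi \in A}\|f'_n(\xi) - f'_m(\xi)\|.
\]
Taking $y = x_0$, combining with convergence of $\{f_n(x_0)\}$ (hence Cauchy-ness at $x_0$) and the uniform Cauchy property of $\{f'_n\}$, one concludes that $\{f_n\}$ is uniformly Cauchy on any bounded piece of $A$. Completeness of $Y$ (which one must tacitly assume) then yields a limit $f$ to which $f_n \to f$ uniformly on $A$.

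Second, I would prove that $f$ is differentiable with $f'(x) = g(x) := \lim_n f'_n(x)$. Fix $x \in A$ and $\varepsilon > 0$. Split the quantity $\bigl\|(f(x+h) - f(x))/\|h\| - g(x)\bigr\|$ by the triangle inequality into three pieces:
\[
\Bigl\|\tfrac{(f - f_N)(x+h) - (f - f_N)(x)}{\|h\|}\Bigr\|
\;+\;
\Bigl\|\tfrac{f_N(x+h) - f_N(x)}{\|h\|} - f'_N(x)\Bigr\|
\;+\;
\|f'_N(x) - g(x)\|.
\]
The first piece is controlled uniformly in $h$ by applying the same mean-value inequality to $f - f_N$ (letting $m \to \infty$ in the estimate above) and is $\leq \varepsilon/3$ for all $N$ large enough. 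The third piece is $\leq \varepsilon/3$ by pointwise convergence of the derivatives, again for $N$ large. Fixing such an $N$, the second piece is $\leq \varepsilon/3$ for $\|h\|$ sufficiently small, by differentiability of $f_N$ at $x$. This yields $f'(x) = g(x)$.

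The main obstacle is not any single hard step but rather making the ``vector space'' hypotheses precise enough to run the argument: one needs norms on $X$ and $Y$, completeness of $Y$, a Fr\'echet-type notion of derivative, and the mean-value inequality $\|F(x) - F(y)\| \leq \|x-y\|\sup_\xi \|F'(\xi)\|$ along segments in $A$. For the application in this paper (where $X = \real^s$ or $\compl^s$, $Y = \compl$, and $A$ is a product of intervals intersected with a rectangle $K$), all these conditions hold, and the classical proof sketched above goes through without modification.
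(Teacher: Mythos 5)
The paper does not prove this statement at all: it is quoted verbatim from the textbook reference \cite{Zor2} as an auxiliary result in the Appendix, so there is no internal argument to compare yours against. Your proof is the classical term-by-term differentiation argument (uniform Cauchy via the mean-value inequality applied to $f_n - f_m$, then the three-epsilon split for differentiability of the limit), which is exactly the proof in the cited source, and it is correct under the implicit hypotheses you rightly flag --- norms on $X$ and $Y$, completeness of $Y$, convexity of $A$, and boundedness of $A$ (without which one only gets uniform convergence on bounded pieces, not on all of $A$ as the statement claims); all of these hold in the paper's application, where $A$ is a bounded polydisc of weight vectors times the rectangle $K$ and $Y = \compl$.
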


\begin{theorem}[Newton-Kantorovich, \cite{Kant96}]
\label{NK}
Consider vector spaces $X$, $Y$ over $\compl$ and a functional equation $F(t) =0$, where $F: X \to Y$.
Assume that the conditions hold:
\begin{enumerate}
\item
$F$ is differentiable at $t^0 \in X$,
$\|F'(t^0)^{-1}\|_Y \leq \beta_0.$
\item
$t_0$ solves approximately $F(t)= 0$ with  estimate
$\|F'(t^0)^{-1} F(t^0)\|_Y \leq \eta_0.$
\item
$F''(t)$ is bounded in $B_0$ (see below):
$\|F''(t)\|_Y \leq K_0.$
\item
$\beta_0$, $\eta_0$, $K_0$ satisfy the inequality
$h_0 = \beta_0 \eta_0 K_0 \leq \frac 1 2.$
\end{enumerate}
Then there is the unique root $t^*$ of $F$ in 
$B_0: = \{t\in  X:\|t - t^0\|_X \leq \frac {1 - \sqrt{1 - 2h_0}}{h_0} \eta_0\}.$
\end{theorem}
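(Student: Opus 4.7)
The plan is to execute Newton's iteration
$$t^{k+1} := t^k - [F'(t^k)]^{-1} F(t^k), \quad k\geq 0,$$
starting from the given $t^0$, and to control it by comparison with a scalar \emph{majorizing} sequence. Introduce the quadratic
$\varphi(\tau) := \tfrac{K_0}{2}\tau^2 - \tau/\beta_0 + \eta_0/\beta_0$.
Under hypothesis $(4)$ its discriminant is nonnegative and its smaller root is exactly
$\tau^* := \frac{1-\sqrt{1-2h_0}}{h_0}\,\eta_0$, the radius defining $B_0$. Setting $\tau^0 := 0$ and $\tau^{k+1} := \tau^k - \varphi(\tau^k)/\varphi'(\tau^k)$ gives a monotone increasing scalar sequence with $\tau^k \uparrow \tau^*$; in particular $\tau^1 = \eta_0$ and $\varphi(\tau^1) = \tfrac{K_0}{2}\eta_0^2$.

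The heart of the proof is an induction establishing the three simultaneous estimates
$$\|t^{k+1}-t^k\| \leq \tau^{k+1}-\tau^k, \qquad
\|[F'(t^k)]^{-1}\| \leq -\frac{1}{\varphi'(\tau^k)}, \qquad
\beta_0 \|F(t^k)\| \leq \varphi(\tau^k).$$
The base case $k=0$ is exactly hypotheses $(1)$--$(2)$. For the inductive step, the key tool is the second-order Taylor remainder
$$F(t^{k+1}) = F(t^k) + F'(t^k)(t^{k+1}-t^k) + \int_0^1 (1-s)\, F''\bigl(t^k+s(t^{k+1}-t^k)\bigr)\bigl(t^{k+1}-t^k\bigr)^{\otimes 2}\, ds,$$
whose first two terms cancel by the definition of the Newton update. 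Hypothesis $(3)$ then yields $\|F(t^{k+1})\| \leq \tfrac{K_0}{2}\|t^{k+1}-t^k\|^2$, the operator-valued analogue of the scalar identity $\varphi(\tau^{k+1}) = \tfrac{K_0}{2}(\tau^{k+1}-\tau^k)^2$. Invertibility of $F'(t^{k+1})$ follows from $\|F'(t^{k+1}) - F'(t^0)\| \leq K_0\,\|t^{k+1}-t^0\| \leq K_0\tau^{k+1} < 1/\beta_0$ via the Banach--Neumann geometric-series lemma, and the resulting norm bound matches $-1/\varphi'(\tau^{k+1})$.

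With the three bounds in hand, the telescoping majorant $\|t^{k+p}-t^k\| \leq \tau^{k+p}-\tau^k \to 0$ shows $\{t^k\}$ is Cauchy; its limit $t^*$ lies in $B_0$ with $\|t^* - t^0\| \leq \tau^*$. Since $\|F(t^k)\|\to 0$ and $F$ is continuous on $B_0$ by hypothesis $(3)$, we obtain $F(t^*)=0$. For uniqueness, any other root $\tilde t \in B_0$ satisfies
$0 = F(\tilde t) - F(t^*) = \bigl(\int_0^1 F'\bigl(t^*+s(\tilde t - t^*)\bigr)\, ds\bigr)(\tilde t - t^*),$
and the integrated operator still differs from $F'(t^0)$ by at most $K_0\tau^* \leq 1/\beta_0$ in norm, hence is invertible, forcing $\tilde t = t^*$. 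The main obstacle is closing the three-part induction consistently: the invertibility of $F'(t^k)$ with a quantitative norm estimate, the quadratic contraction of $\|F(t^k)\|$, and the containment of all iterates in the ball where $F''$ is bounded must propagate in lock-step. Kantorovich's key insight is that all three controls are faithfully encoded in the single scalar polynomial $\varphi$; once the majorant is chosen correctly, the rest reduces to Taylor's theorem and Banach--Neumann inversion.
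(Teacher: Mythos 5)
The paper does not prove this theorem: it is quoted verbatim from the literature (Kantorovich--Akilov) and used as a black box in Proposition \ref{bla-bla}, so there is no in-paper argument to compare against. Your proposal is the classical majorant-function proof from that source, and its architecture is correct: the scalar quadratic $\varphi$, the identification of its smaller root with the radius of $B_0$, the three-part induction closed by the second-order Taylor remainder and Banach--Neumann inversion, and the telescoping Cauchy estimate. Two small points would need repair in a full write-up. First, the third induction hypothesis is mis-normalized: at $k=0$ the claim $\beta_0\|F(t^0)\|\leq\varphi(\tau^0)=\eta_0/\beta_0$ does not follow from $\|[F'(t^0)]^{-1}F(t^0)\|\leq\eta_0$ (and at $k=1$ it would force $\beta_0\leq 1$); the standard fix is to carry the combined quantity $\|[F'(t^k)]^{-1}F(t^k)\|\leq-\varphi(\tau^k)/\varphi'(\tau^k)=\tau^{k+1}-\tau^k$, whose base case is exactly hypothesis $(2)$ and whose inductive step uses $\varphi(\tau^{k+1})=\tfrac{K_0}{2}(\tau^{k+1}-\tau^k)^2$. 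Second, your uniqueness argument via Banach--Neumann needs $\beta_0 K_0\tau^*<1$, i.e.\ $1-\sqrt{1-2h_0}<1$, which fails in the borderline case $h_0=1/2$; there uniqueness in the closed ball requires the usual extra comparison argument. Neither issue affects the application in the paper, where $h_0$ is taken well below $1/2$.
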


%\begin{theorem}[Implicit function theorem, \cite{FG02}]
%\label{IFT}
%Let $B \subset \compl^{r+1} \times \compl$ be an open set, 
%$F: B \to \compl$ an analytic mapping, and $(z_0, w_0) \in B$
%a point with $F(z_0, w_0) = 0$ and
%\begin{eqnarray*}
%\det \left( \frac {\partial F} {\partial z_{r+2}} (z_0, w_0) \right) \neq 0.
%\end{eqnarray*}
%Then there is an open neighbourhood 
%$U = U' \times U'' \subset B$ and an analytic map $g: U' \to U''$ such that
%$
%\{(z,w) \in U' \times U'': F(z,w) = 0\} = \{(z, g(z)): z \in U'\}.
%$
%\end{theorem}

%-------------------------------------------

%---------------------------------------------

\section{Proof of the general  scheme for asymptotic expansions}

For the simplicity we will use the following short cut:
$$
h_n(\unve_n) : = h_n(\unve_n;t).
$$

\begin{proof}[Proof of Proposition \ref{pr}]
As before, we denote 
$\unve_m: = (\ve_1, \dots, \ve_m)\in \real^m$, where if not specified otherwise
$\ve_1 = \dots = \ve_m = m^{-1/2}$.
Let us denote $\uns_2: = (\sigma_1, \sigma_2)\in \real^2$ such that $|\sigma_j| \leq n^{-1/2}$, $j=1,2$, $m \geq n >3$.
We will identify $(\unve_m,\uns_2),$ and
$(\unve_m, 0,\uns_2) \in \real^{m+3}$. 
In particular, notice that
\be*
h_{m+3}(\unve_m,0,\uns_2 ) 
= h_{m+2}(\unve_m,\uns_2).
\ee*
We will also use the following notation  
$$
h_m(\unve_{m-k}) : = 
h_m(\unve_{m-k},\underbrace{0,\dots,0}_{k}),\ \ m \geq k >0.
$$
Now we  expand the function $h_{m+3}(\unve_{m+1}, \uns_2)$ at the point $(\unve_m,\uns_2)$ and get
\begin{eqnarray}
\label{start}
\lefteqn{
h_{m+3}(\unve_{m+1},\uns_2)  = 
h_{m+3}(\unve_{m},\uns_2)}  \nonumber \\
& + &
\sum_{|\alpha| \leq 2} \alpha!^{-1} D^{\alpha} h_{m+3}(\unve_m,\uns_2) ((\unve_{m+1},\uns_2) - (\unve_m,\uns_2))^{\alpha} 
+R_3(m), 
\end{eqnarray}
where $R_3(m)$ is a remainder in  the Lagrange form:
\begin{eqnarray}
\label{lagrange}
R_3(m) = \frac 1 {3!} \left (t_1 \frac \partial {\partial \ve_1} + \dots + t_{m+1} \frac \partial {\partial \ve_{m+1}}\right)^3 h_{m+1} (\unve_{m+1} - \theta \underline t_{m+1}),
\end{eqnarray}
where  $t_j = m^{-1/2} - (m+1)^{-1/2}$, $j = 1,\dots,m$, $t_{m+1} = m^{-1/2}$ and 
$0 < \theta< 1$.
We can deduce the estimate for $R_3(m)$ from $|m^{-1/2} - (m+1)^{-1/2}| \leq c m^{-3/2}$ and counting number of terms in (\ref{lagrange}):
\begin{eqnarray}
\label{remainder}
|R_3(m) |
 \leq  c d_3(h,n) m^{-3/2},\quad  m \geq n>s.
\end{eqnarray}
We rewrite (\ref{start}) in the following way:
\begin{eqnarray}
\label{difer}
\lefteqn{
h_{m+3}(\unve_{m},\uns_2) - h_{m+3}(\unve_{m+1},\uns_2) } \\ 
&=&- \sum_{|\alpha| \leq 2} \alpha!^{-1} D^{\alpha} h_{m+3}(\unve_m,\uns_2) ((\unve_{m+1},\uns_2) -
(\unve_m,\uns_2))^{\alpha} - R_3(m). \q\q \nonumber
\end{eqnarray}
The next step is  expanding the derivatives  on the right-hand side and making use of  condition (\ref{fd}).
We start with the  second mixed derivatives in (\ref{difer})
\begin{eqnarray*}
\frac  {\partial}{\partial \ve_j}\frac{\partial}{\partial \ve_k}h_{m+3}(\unve_{m},\uns_2) 
& = & 
\frac  {\partial}{\partial \ve_j}\frac{\partial}{\partial \ve_k}h_{m+3}(\unve_{m},\uns_2)\Big|_{\ve_j=\ve_k=0}+
O(d_3(h,n)m^{-1/2})\\
& = & O(d_3(h,n)m^{-1/2}), \quad j\neq k.
\end{eqnarray*}
The other derivatives in (\ref{difer}) have the expansions
\begin{eqnarray*}
\frac  {\partial}{\partial \ve_j}h_{m+3}(\unve_{m},\uns_2)
& =&
\frac  {\partial^2}{\partial \ve_j^2}h_{m+3}(\unve_{m},\uns_2)\Big|_{\ve_j = 0} m^{-1/2}+ O (d_3(h,n)m^{-1}),\\
\end{eqnarray*}
\begin{eqnarray*}
\frac  {\partial^2}{\partial \ve_j^2}h_{m+3}(\unve_{m},\uns_2)
& =&
\frac  {\partial^2}{\partial \ve_j^2}h_{m+3}(\unve_{m},\uns_2)\Big|_{\ve_j = 0}+ O (d_3(h,n)m^{-1/2}).
\end{eqnarray*}
Replacing the derivatives in  (\ref{fd}) by their expansions we obtain
\begin{eqnarray*}
\lefteqn{
h_{m+3}(\unve_{m},\uns_2) - h_{m+3}(\unve_{m+1},\uns_2)}  
\nonumber \\
& =  &\sum_{j=1}^m \frac  {\partial^2}{\partial \ve_j^2}h_{m+3}(\unve_{m},\uns_2)\Big|_{\ve_j = 0}
\left[\frac 1 2  (m^{-1} - (m+1)^{-1})\right]\\ 
\nonumber
&  - &\frac 1 2(m  +1) ^ {-1}
 \frac  {\partial^2}{\partial \ve_{m + 1}^2}h_{m + 3}(\unve_m,\uns_2)\Big|_{\ve_{m+1} = 0}
+O\left(d_3(h,n)m^{-3/2}\right).\\ \nonumber
\end{eqnarray*}
Since the function $h_{m+3}(\cdot)$ is symmetric we arrive at
\begin{eqnarray}
\label{kjoiuy}
\lefteqn{
h_{m+3}(\unve_m,\uns_2) - h_{m+3}(\unve_{m+1},\uns_2)}\\
& = & \frac 1 {2(m  +1)} \left(
 \frac  {\partial^2}{\partial \ve_{1}^2}h_{m + 3}(\unve_m,\uns_2)\Big|_{\ve_{1} = 0}
 -    \frac  {\partial^2}{\partial \ve_{m + 1}^2}h_{m + 3}(\unve_m,\uns_2)\Big|_{\ve_{m+1} = 0} \right) \nonumber \\
&& + \ \  O\left(d_3(h,n)m^{-3/2}\right).\nonumber 
\end{eqnarray}
In order to eliminate zero at the $(m+1)$st place of 
$\frac  {\partial^2}{\partial \ve_{j}^2}h_{m + 3}(\unve_m,\uns_2)\Big|_{\ve_{j} = 0}$, $j = 1,\dots, m$
we apply the Taylor series in the following way:
\begin{eqnarray}
\label{zmz}
\frac  {\partial^2}{\partial \ve_j^2}h_{m+3}(\unve_m,\uns_2)\Big|_{\ve_j = 0}
 = 
\frac  {\partial^2}{\partial \ve_j^2}h_{m+3}(\unve_m,\uns_2)\Big|_{\ve_j = 0,\ \ve_{m+1} = m^{-1/2}}
+O\left(d_3(h,n)m^{-1/2}\right). 
\end{eqnarray}
Pluging (\ref{zmz}) into (\ref{kjoiuy}) and using the symmetry condition we conclude
$$
h_{m+3}(\unve_m,\uns_2) - h_{m+3}(\unve_{m+1},\uns_2) = O\left(d_3(h,n)m^{-3/2}\right).
$$
It is easy to see that
$$
h_{m+k+2}(\unve_{m+k},\uns_2) - h_{m+k+3}(\unve_{m+k+1},\uns_2) = O\left(d_3(h,n)(m+k)^{-3/2}\right).
$$
Summing up these differences for $r\geq m$, we obtain
$$
\sum_{k=0}^{r-1}(h_{m+k+2}(\unve_{m+k},\uns_2) - h_{m+k+3}(\unve_{m+k+1},\uns_2)) =
O\left(d_3(h,n)\right)\sum_{k=0}^{r-1}(m+k)^{-3/2}.
$$
Hence,
\begin{equation}
\label{seq}
h_{m+2}(\unve_{m},\uns_2) - h_{m+r+2}(\unve_{m+r},\uns_2) = O\left(d_3(h,n)\right)\sum_{k=0}^{r-1}(m+k)^{-3/2}.
\end{equation}
Finally, (\ref{seq}) shows that $h_{m+2}(\unve_m,\uns_2)$, $m = n, n+1, \dots$  is a Cauchy sequence in $m$ with a limit which we  denote by $h_{\infty}(\uns_2)$, $|\sigma_j| \leq n^{-1/2}$, $j=1,2$. 
Taking $m=n$  and letting $r\to \infty$ in (\ref{seq}) we obtain 
$$
h_{n+2}(n^{-1/2}, \dots, n^{-1/2}, \uns_2) - h_{\infty}(\uns_2) = O\left(d_3(h,n)n^{-1/2}\right),
$$
which proves the proposition.
\end{proof}

The following lemma describes the procedure of eliminating zeros like the one that is used in (\ref{zmz}). 
The lemma shows that additional variables can be introduced (according to the compatibility property of $h_m$). Then we can differentiate with respect to the additional variables at zero instead of differentiating with respect to 
$\ve_j$, $j =1,...,m+1$.

\begin{lemma}
\label{lemma}
Suppose that conditions $(\ref{sym})-(\ref{fd})$ hold. 
Then
\begin{eqnarray}
\label{lemma1}
\lefteqn{
\sum_{j=1}^k \frac {\partial^j} {\partial \ve^j} h_{m+1} (\ve, \ve_2, \dots, \ve_{m+1})\Big |_{\ve = 0} j!^{-1}
(\eta^j - \ve^j)} \\ 
&=& \sum_{r=1} ^k \widetilde{P}_r\left((\eta^. - \ve^.)\kappa_.(D)\right)
h_{m+1+k}(\lambda_1,\dots, \lambda_k, \ve, \ve_2, \dots, \ve_{m+1})
\Big|_{\underline\lambda_k = 0} \nonumber \\
& + & O(m^{-(k+1)/2}), \nonumber
\end{eqnarray}
where the differential operators $\widetilde{P}_r$ and $\kappa_p$ are defined in $(\ref{polynomial})$ below and $(\ref{ln})$, and
$$
(\eta^. - \ve^.)\kappa_.(D): = ((\eta^p - \ve^p)\kappa_p(D),\ \ p = 1, \dots, r).
$$
\end{lemma}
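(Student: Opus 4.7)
The plan is to combine the compatibility and symmetry conditions $(\ref{sym})$--$(\ref{fd})$ for $h$ with the formal generating-function identity underlying the definition $(\ref{ln})$ of the cumulant operators $\kappa_p(D)$. First, using compatibility $(\ref{comp})$, I would introduce $k$ fresh variables $\lambda_1,\dots,\lambda_k$ via
\[
h_{m+1}(x,\ve_2,\dots,\ve_{m+1}) \;=\; h_{m+1+k}(\lambda_1,\dots,\lambda_k,x,\ve_2,\dots,\ve_{m+1})\big|_{\underline\lambda=0},
\]
and by the symmetry condition $(\ref{sym})$ a single-variable derivative $\partial_x^j$ at $x=0$ may be replaced by $\partial_{\lambda_1}^j$ at $\lambda_1=0$ (with all remaining $\lambda_i$'s and $x$ set to zero). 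The vanishing first-derivative condition $(\ref{fd})$ shows that the $j=1$ term contributes nothing, so effectively the sum in (\ref{lemma1}) runs from $j=2$.

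Second, I would rewrite the resulting single-variable derivatives in terms of cumulant operators through the defining identity $(\ref{ln})$, which may be re-cast as
\[
1+\sum_{p\ge 2}\tfrac{t^p}{p!}D^p \;=\;\exp\!\Bigl(\sum_{p\ge 2}\tfrac{t^p}{p!}\kappa_p(D)\Bigr)
\]
as an identity of formal power series in an auxiliary variable $t$. Evaluating at $t=\eta$ and $t=\ve$, subtracting, and applying the result to the symmetric function $h_{m+1+k}$ regarded as a function of its first $k+1$ slots, yields
\[
\sum_{p\ge 2}\tfrac{\eta^p-\ve^p}{p!}\,\partial_{\lambda_1}^p h_{m+1+k}\big|_{\underline\lambda=0}\;=\;\sum_{r\ge 2}\widetilde P_r\bigl((\eta^.-\ve^.)\kappa_.(D)\bigr)\, h_{m+1+k}\big|_{\underline\lambda=0},
\]
where $\widetilde P_r$ is, as in $(\ref{polynomial})$, the homogeneous piece of the expanded exponential of total degree $r$ in the symbols $(\eta^p-\ve^p)$. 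By Step~1, the left-hand side here agrees with the full (untruncated) version of the LHS of (\ref{lemma1}).

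Third, I would truncate both sides at total degree $k$ in $(\eta,\ve)$. On the left this keeps precisely the first $k$ terms displayed in the lemma; on the right, $\widetilde P_1,\dots,\widetilde P_k$ (where $\widetilde P_1=0$). Every discarded term contains a product $\prod_i(\eta^{p_i}-\ve^{p_i})$ with $\sum_i p_i\ge k+1$, and since $|\eta|,|\ve|\le c\,m^{-1/2}$ gives $|\eta^p-\ve^p|\le c_p\,m^{-p/2}$, the tail is $O(m^{-(k+1)/2})$; uniform boundedness of the derivatives of $h_{m+1+k}$ of the relevant orders is supplied by the standing hypothesis $d^s_s(h,n)\le B$ inherited from the setup of Theorem~\ref{th}. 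Observe also that for $r\le k$ each operator $\widetilde P_r$ involves at most $\lfloor r/2\rfloor$ distinct differentiation slots, all supplied by the $k$ fresh $\lambda$-variables, so no compatibility conflict arises in interpreting $\kappa_p(D)$.

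The main obstacle is the combinatorial bookkeeping: matching the single-variable powers $D^p$ appearing on the left with the multi-variable monomials $D^{p_1}\cdots D^{p_r}$ packaged inside $\kappa_p(D)$ via $(\ref{ln})$, and checking that after both the truncation in $r$ and the scaling $\eta^p-\ve^p=O(m^{-p/2})$ every omitted contribution is genuinely of order $m^{-(k+1)/2}$ uniformly. Once these bookkeeping details are settled, the identity (\ref{lemma1}) is a direct algebraic consequence of the cumulant identity $(\ref{ln})$ and the compatibility/symmetry of $h$.
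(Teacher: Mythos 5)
There is a genuine gap in your second step, and it is precisely the point the lemma is designed to handle. Write $A:=\sum_{p\ge2}p!^{-1}\eta^{p}\kappa_p(D)$ and $B:=\sum_{p\ge2}p!^{-1}\ve^{p}\kappa_p(D)$. The identity (\ref{ln}) gives $e^{A}=1+\sum_{p\ge2}p!^{-1}\eta^{p}D^{p}$ and $e^{B}=1+\sum_{p\ge2}p!^{-1}\ve^{p}D^{p}$, so subtracting the two evaluations yields $\sum_{p\ge2}p!^{-1}(\eta^{p}-\ve^{p})D^{p}=e^{A}-e^{B}$. By the definition (\ref{polynomial}), however, the operator $\sum_{r\ge1}\widetilde P_r\bigl((\eta^{.}-\ve^{.})\kappa_.(D)\bigr)$ equals $e^{A-B}-1$, and $e^{A}-e^{B}\neq e^{A-B}-1$ as formal series. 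Already in total degree $4$ the series $e^{A}-e^{B}$ contains no mixed term $D^{2}D^{2}$, whereas $e^{A-B}-1$ contains $\bigl(-\tfrac{1}{8}(\eta^{4}-\ve^{4})+\tfrac{1}{8}(\eta^{2}-\ve^{2})^{2}\bigr)D^{2}D^{2}=\tfrac{1}{4}\ve^{2}(\ve^{2}-\eta^{2})D^{2}D^{2}$; for $\eta,\ve$ of order $m^{-1/2}$ this discrepancy is of order $m^{-2}$ and is not absorbed by the remainder $O(m^{-(k+1)/2})$ once $k\ge 4$. So your displayed identity is false, and the truncated version of it does not reproduce the right-hand side of (\ref{lemma1}).

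The source of the error is that you evaluate the slot carrying $\ve$ at zero (``with all remaining $\lambda_i$'s and $x$ set to zero''), whereas in (\ref{lemma1}) only the $\lambda$-slots are set to zero and the $(k+1)$-st slot is kept at $\ve$. That retained $\ve$ is exactly what reconciles the two exponentials: Taylor-expanding the $\ve$-slot of $h_{m+1+k}$ around zero contributes the operator factor $\sum_{l}\widetilde P_l(\ve^{.}\kappa_.(D))=e^{B}$, and the multiplication theorem $\sum_{r+l=j}\widetilde P_r(\tau_.\kappa_.)\widetilde P_l(\tau'_.\kappa_.)=\widetilde P_j\bigl((\tau_.+\tau'_.)\kappa_.\bigr)$ applied with $\tau_.=\eta^{.}-\ve^{.}$ and $\tau'_.=\ve^{.}$ then realizes $e^{B}(e^{A-B}-1)=e^{A}-e^{B}$, collapsing each total-degree-$j$ block to $\widetilde P_j(\eta^{.}\kappa_.)-\widetilde P_j(\ve^{.}\kappa_.)=j!^{-1}(\eta^{j}-\ve^{j})D^{j}$, which is the left-hand side of (\ref{lemma1}). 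This is the route the paper takes. Your first step (introducing fresh variables via compatibility and symmetry) and your third step (bounding the truncation tail using $|\eta^{p}-\ve^{p}|=O(m^{-p/2})$) are sound, but without the $e^{B}$ factor the identity you are truncating is not the one asserted by the lemma.
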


\begin{proof}
The differential operators
$\widetilde{P}_r(\tau_.\kappa_.)$ are polynomials in the cumulant operators $\kappa_p$ (see (\ref{ln})) 
multiplied by formal variables $\tau_p$, $p = 1, \dots, r$.
These polynomials are defined by the formal power series in $\tau_p$
\begin{equation}
\label{polynomial}
\sum_{j = 0}^{\infty} \widetilde{P}_j(\tau_.\kappa_.(D))\mu^j = 
\exp\left(\sum_{j = 2}^{\infty} j!^{-1} \tau_j \kappa_j(D)\mu^j\right).
\end{equation}
When  $\tau_j = \tau^j,\ j\geq1$, then due to (\ref{ln}) we have
\begin{eqnarray*}
\sum_{j=0}^{\infty}\widetilde{P}_j(\tau_.\kappa_.(D)) =
1 + \sum_{j=2}^\infty j!^{-1} \tau_j D^j.
\end{eqnarray*}
 Hence, $\widetilde{P}_0(\tau_.\kappa_.(D)) = 1$, $\widetilde{P}_1(\tau_.\kappa_.(D)) = 0$ and
$\widetilde{P}_j(\tau_.\kappa_.(D)) = j!^{-1} \tau_j D^j$, $j\geq 2$, 
which means that the differential operators $\widetilde P_r$ are nothing else than derivatives of order $r$ multiplied by
 $r!^{-1}$ and the corresponding power of the formal variable $\tau_r$. 
It easy to see that $\widetilde P_r$ gives the $r$th term in the Taylor expansion so that we can write
\begin{equation*}
h_m(\unve_m) = \sum_{j=0}^{k} \widetilde{P}_j(\ve^._i\kappa_.(D))h_m(\unve_m)\Big | _{\ve_i = 0} + O(m^{-(k+1)/2}),
\quad  i = 1,\dots, m.
\end{equation*}
Notice that  $\widetilde P_r$ depends on the cumulant differential operators $\kappa_.(D)$. 
These operators consist of derivatives with respect to multi-variables, 
for instance $\kappa_4(D) = D^4 - 3 D^2D^2$. 
Here $D^2D^2$ denotes differentiation with respect to two different variables (we do not need to specify the variables because of the symmetry condition). 
Therefore, we  introduce additional variables,  say $\underline\lambda_k$, and  write
$$
h_m(\unve_m) = \sum_{j=0}^{k} \widetilde{P}_j(\ve^._i\kappa_.(D))h_{m+k}
(\underline{\lambda}_k,\unve_m)\Big | _{\underline\lambda_k=\ve_i = 0} + O(m^{-(k+1)/2}),
$$
$ i = 1,\dots, m$.
The advantage of the operators $\widetilde P_r$ is that they are defined by exponents which can be easily  reordered by  the properties of exponential functions. 
Due to  (\ref{polynomial}) and the multiplication theorem for exponential functions we obtain
\begin{eqnarray*}
\sum_{j+l=r}\widetilde{P}_j(\tau_.\kappa_.)\widetilde{P}_l(\tau_.'\kappa_.) = 
\widetilde{P}_r((\tau_. + \tau_.')\kappa_.) \quad (\tau_. = (\tau_1, \dots, \tau_{r})).
\end{eqnarray*}
In order to prove the theorem we start from the right-hand side of (\ref{lemma1}): 
\begin{eqnarray*}
\lefteqn{
\sum_{r=1}^k \widetilde{P}_r ((\eta^. - \ve^.)\kappa_.(D))h_{m+1+k}(\lambda_1,\dots\lambda_k,\ve,\ve_2, \dots, \ve_{m+1}) \Big|_{\lambda_1 = \dots = \lambda_k = 0}}\\
&=& \sum_{r=1}^k \widetilde{P}_r ((\eta^. - \ve^.)\kappa_.(D))  \sum_{l=0}^ {k-r}\widetilde{P}_l (\ve^. \kappa_.(D))\\
&\times&h_{m+1+k}(\lambda_1, \dots, \lambda_{k}, \ve,\ve_2,\dots, \ve_{m+1})\Big|_{\lambda_1 = \dots = \lambda_{k} = \ve = 0}+O(m^{-(k+1)/2})\\
&=&\sum_{j=1}^k \sum_{\genfrac{}{}{0pt}{}{l+r = j} {r \geq 1}}\widetilde{P}_r ((\eta^. - \ve^.)\kappa_.(D))\widetilde{P}_l (\ve^. \kappa_.(D))\\
&\times&h_{m+1+k}(\lambda_1, \dots, \lambda_j, \ve,\ve_2,\dots, \ve_{m+1})\Big|_{\lambda_1 = \dots = \lambda_k = \ve = 0}+O(m^{-(k+1)/2})\\
&=&\sum_{j=1}^k \left(\widetilde{P}_j (\eta^. \kappa_.(D) - \widetilde{P}_j (\ve^. \kappa_.(D)\right)
h_{m+1+k}(\underline \lambda_k, \ve,\ve_2,\dots, \ve_{m+1})\Big|_{\underline \lambda_k =0, \ve = 0} \\
&+&O(m^{-(k+1)/2})\\
&=& \sum_{j=1}^k \frac {\partial^j}{\partial \ve^j}h_{m+1}(\ve, \ve_2,\dots, \ve_{m+1})\Big|_{\ve=0}
j!^{-1}(\eta^j - \ve^j) + O(m^{-(k+1)/2}).
\end{eqnarray*}
The last expression coincides with the left-hand side in (\ref{lemma1}), thus the theorem is proved.
\end{proof}

\begin{proof}[Proof of Theorem \ref{th}]

The theorem  will be proved by induction on the length of  the expansion, starting with  $s=4$. 
The case $s=3$ was shown in Proposition \ref{pr}.
Assume that $m \geq n$ ($n \geq 1)$.
We start with the expansion
\begin{eqnarray}
\label{beg}
\lefteqn{
h_{m+1}(\unve_m) - h_{m+1}(\unve_{m+1})} \nonumber\\
&=& - \sum_{0 < |\alpha| < s} \alpha!^{-1} D^{\alpha} h_{m+1}(\unve_m) (\unve_{m+1} - \unve_m)^{\alpha} + R_s(m),
\end{eqnarray}
where  
\begin{equation}
\label{ineq}
|R_s(m)| \leq C d_s (h,n) m^{-s/2}.
\end{equation} 
The last inequality is similar to inequality (\ref{remainder}) in the proof of  Proposition \ref{pr}.

In order to apply  condition (\ref{fd}) on the first derivatives we expand 
$D^{\alpha}h_{m+1} (\unve_m)$, $\alpha = (\alpha_{j_1}, \dots, \alpha_{j_p})$,
$1 \leq j_1 < \dots < j_p \leq m+1$, around $\ve_{j_r} = 0,\ r = 1,  \dots, p$. This yields
\begin{equation}
\label{111}
D^\alpha h_{m+1}(\unve_m)\  = \sum_{0<|\alpha| + |\beta| <s}D^{\alpha + \beta} h_{m+1} (\unve^*_m)
\unve_m^\beta \beta!^{-1} + \widetilde {R}_s(m),
\end{equation}
where $ \widetilde {R}_s(m)$ satisfies  inequality (\ref{ineq}), $\unve^*_m$ is equal to $\unve_m$
except for the components $\ve_{j_1}, \dots, \ve_{j_p}$, which are zero, and $\beta$ is a vector of partial derivatives in the components $j_1, \dots, j_p$. 
Rewrite the derivatives in (\ref{beg}) by their expressions from (\ref{111})
\begin{eqnarray*}
\lefteqn{
h_{m+1}(\unve_{m}) - h_{m+1}(\unve_{m+1})}\\
&=&-\sum_{0<|\alpha|+|\beta| < s} \alpha!^{-1} \beta!^{-1} D^{\alpha + \beta} h_{m+1} (\unve^*_m)
 (\unve_{m+1} - \unve_m)^{\alpha}\unve_m^\beta  +\widetilde{\widetilde R}_s(m),
\end{eqnarray*}
where $\widetilde{\widetilde R}_s(m)$ denotes a remainder term satisfying (\ref{remainder}).

Let $\ve_{m,j} = m^{-1/2}$ and $\ve_{m+1,j} = (m+1)^{-1/2}$, $j=1,\dots m+1$, 
but $\ve_{m, m+1} = 0$.
Using the following relation
\begin{eqnarray*}
\sum_{\genfrac {}{}{0pt}{}{j+k = r}  {j \geq 1}} j!^{-1} k!^{-1} (\ve - \eta)^j \eta ^k = r!^{-1}(\ve^r -\eta^r), 
\quad r\geq1,\ k \geq 0,
\end{eqnarray*}
then we  obtain
\begin{eqnarray}
\label{222}
\!\!\!\! h_{m+1}(\unve_m) - h_{m+1}(\unve_{m+1}) 
 =  -\sum_{0<|\gamma| < s}\!\! \!\gamma!^{-1} D^{\gamma} h_{m+1} (\unve^*_m)
\prod_{j=1}^{m+1}\!\!{^*}(\ve_{m+1,j}^{\gamma_j} - \ve_{m,j}^{\gamma_j})  +
\widetilde{\widetilde R}_s(m), 
\end{eqnarray}
where $\gamma = (\gamma_1, \dots, \gamma_{m+1})$, $\prod^*$  denotes multiplication 
over all $\gamma_j > 0$, $j = 1,\dots,m+1$.

The next step is replacing $\unve^*_m$ by $\unve_m$ in (\ref{222}).
For this purpose we apply Lemma \ref{lemma} to each partial derivative $\gamma_j >0$. 
More precisely, we will take further derivatives with respect to additional variables at zero and make use of the  symmetry condition.
Introduce the  notation
$$
\Delta^._{m,j}: = (\ve^p_{m+1,j }- \ve^p_{m,j}, p = 1,\dots, s - 1).
$$
Applying  Lemma \ref{lemma} to  the derivatives in (\ref{222}) we arrive at
\begin{eqnarray}
\label{step}
\lefteqn{
h_{m+1}(\unve_m) - h_{m+1}(\unve_{m+1})}  \\
& = &\!\!\! - \sum_{k=1}^{m+1} \sum_{(r)}{^*}\widetilde{P}_{r_1}\left( \Delta^._{m,j_1}\kappa_.\right)
\dots \widetilde{P}_{r_k}\left( \Delta^._{m,j_k}\kappa_.\right)
h_{m+1+r}(\unve_m,0, \dots, 0)
 +\ \   R_{1,s}(m), \nonumber
\end{eqnarray}
where $\sum_{(r)}^*$ means summation over all combinations of $r_1, \dots, r_k \geq1$, $k = 1, \dots, m+1$, such that 
$r = r_1 + \dots +r_k <s$ and all ordered $k$-tuples $(j_1, \dots, j_k)$ of indices $1 \leq j_r \leq m+1$ without repetition and $\kappa_. : = \kappa_.(D)$ is a short notation.
Note that the derivatives on the right-hand side of (\ref{step}) define due to conditions (\ref{der1}) and (\ref{der2}).
The remainder term $R_{1,s}(m)$ satisfies (\ref{remainder}).
It easy to see that such a procedure changes nothing for the $(m+1)$st component because the derivatives 
$\frac {\partial^j} {\partial \ve_{m+1}^j} h_{m+1}(\unve_m)$ are expanded at the same  point $\unve_m$.
Relation (\ref{step}) serves as the induction step in the induction on the length of the expansion, say $l$.

Assume that conditions (\ref{sym}) - (\ref{fd}) and (\ref{der1}) - (\ref{der2}) hold with $(s+q)$ instead of $s$. 
Assume we have already proved that for $l = 3, \dots, s - 1$, $m \geq n$, and $| \alpha| \leq s+q$ we have
\begin{eqnarray}
\label{lkasdjf}
\lefteqn{
D^\alpha h_{m+r}(m^{-1/2}, \dots, m^{-1/2}, \ve_1, \dots, \ve_r)\Big|_{\ve_1 = \dots = \ve_r = 0}}\\ 
&=&\sum_{j = 0} ^{l - 3}m^{-j/2} P_j(\kappa_.(D))D^\alpha h_\infty(\underline \lambda_l,\underline \ve_r)\Big|_{\lambda_1 = \dots = \lambda_l = \ve_1= \dots = \ve_r = 0}
+R_{2,l}(m), \nonumber
\end{eqnarray}
where $R_{2,l}(m)$ satisfies
\begin{equation}
\label{444}
|R_{2,l}(m)| \leq c(s) B m^{-(l - 2)/2}.
\end{equation}
The case $l = 3$ follows from Proposition \ref{pr}, where
$$
h_m (\cdot) = D^\alpha h_{m+r}(\cdot, \ve_1, \dots, \ve_r)\Big|_{\ve_1 = \dots = \ve_r = 0},
$$
which  satisfies conditions $(\ref{sym}) - (\ref{fd})$ and $d_3(h,n)<\infty$.

In order to prove (\ref{lkasdjf}) for $l =s$, observe that (\ref{step}) starts with $m+1$ terms 
of order $O(m^{-3/2})$. The induction assumption (\ref{lkasdjf}) with $|\alpha| = 0$ applied to the terms of (\ref{step}) yields
\begin{eqnarray}
\label{123}
h_{m+1}(\unve_m) - h_{m+1}(\unve_{m+1}) 
 = & -& \sum_{k=1}^{m+1} \sum_{(r)}{^{**}}\widetilde{P}_{r_1}\left( \Delta^._{m,j_1}\kappa_.\right)
\dots \widetilde{P}_{r_k}\left( \Delta^._{m,j_k}\kappa_.\right)
m^{-r_0/2} P_{r_0}(\kappa_.) \nonumber\\
&\times&h_{\infty}(\lambda_1, \dots, \lambda_{r_0},\ve_1, \dots, \ve_r)\Big|_{\underline \lambda = \unve = 0}
 + R_{3,s}(m), 
\end{eqnarray}
where $R_{3,s}(m)$ satisfies (\ref{444}) with $l = s+2$, and $\sum_{(r)}^{**}$ denotes summation over all indices $r_1, \dots, r_k \geq 1$, $r_0 \geq 0$ such that $r_0 + \dots + r_k < s$ and all ordered $k$-tuples 
$(j_1, \dots, j_k)$ of indices 
without repetition.

By definition (\ref{polynomial}) of $\widetilde P_r$, the following formal identity holds:
\begin{equation}
\label{234}
\sum_{j = 1}^{\infty} \widetilde{P}_j((\eta^. - \ve^.)\kappa_.) = 
\exp\left(\sum_{j = 2}^{\infty} j!^{-1} (\eta^j - \ve^j) \kappa_j\right) - 1.
\end{equation}
In order to apply this identity to (\ref{123})  we need to change the order of summation in (\ref{123})
in the following way
\begin{eqnarray}
\label{345}
\lefteqn{
h_{m+1}(\unve_m) - h_{m+1}(\unve_{m+1})}  \\
\noindent
& = & - \sum_{r_0 = 0}^{s - 4}m^{-r_0/2} P_{r_0}(\kappa_.)
 \sum_{k=1}^{m+1} \sum_{(j)}{\!^*}  \left[ \prod_{l = 1} ^{s - r_0} \left\{\sum_{v_l = 1} ^\infty 
\widetilde P_{v_l}\left( \Delta^._{m,j_l}\kappa_.\right) \right\}\right]_{s - r_0}\!\!\! h_\infty 
 +  R_{3,s}(m), \nonumber
\end{eqnarray}
where $h_\infty : = h_{\infty}(\lambda_1, \dots, \lambda_{r_0},\ve_1, \dots, \ve_r)\Big|_{\underline \lambda = \unve = 0}$, 
$[\,\  ]_r$ denotes all terms of the enclosed formal power series which are proportional to monomials 
$ \Delta^{p_1}_{m,j_1}$ $\dots$  $\Delta^{p_k}_{m,j_k}$ with $p_1 + \dots + p_k < r$, $k \leq m+1$, and
$\sum_{(j)}^*$ denotes summation over all ordered $k$-tuples $(j_1, \dots, j_k)$ 
 without repetition of the indices.
Applying (\ref{234}) to (\ref{345}), we get
\begin{eqnarray}
\label{456}
\lefteqn{
h_{m+1}(\unve_m) - h_{m+1}(\unve_{m+1})  =  - \sum_{r_0 = 0}^{s - 4}m^{-r_0/2} P_{r_0}(\kappa_.)}\\ 
& \times &
\left[  \sum_{k=1}^{m+1} \sum_{(j)}{\!^*} \prod_{l = 1} ^{s - r_0} \left\{\exp \left[\sum_{p = 2} ^ \infty 
\Delta^p_{m,j_l}p!^{-1}\kappa_p\right] -1 \right\}\right]_{s - r_0}\!\!\!\! h_\infty 
 + R_{3,s}(m). \nonumber
\end{eqnarray}

The identity 
$ \sum_{k=1} ^ {m+1} \sum_{(j)}\!\!^* \prod_{r =1}^k (e_{j_r} - 1) = \prod_{l = 1}^{m+1} e_{j_l} - 1$
together with the symmetry condition of $h_m(\cdot),\ m \geq 1$, shows that (\ref{456}) is equal to
\begin{eqnarray}
\label{exp}
\lefteqn{
h_{m+1}(\unve_m) - h_{m+1}(\unve_{m+1}) }\\ 
& = & -\sum_{r_0 = 0}^{s - 4}m^{-r_0/2} P_{r_0}(\kappa_.)
\left[\exp\left[\sum_{p = 2}^{\infty} \left( \sum_{k = 1} ^ {m+1}\Delta^p_{m,k}\right)p!^{-1} \kappa_p \right] - 1 \right]_{s - r_0} \!\!\! h_\infty  
+  R_{4,s}(m). \nonumber 
\end{eqnarray}
It is easy to see that
\begin{eqnarray}
\label{213}
\sum_{k=1}^{m+1} \Delta^2_{m,k} = \sum_{k=1}^{m+1}\frac 1 {m+1} - \sum_{k = 1}^m \frac 1 m = 0 
\end{eqnarray}
(``equality of variances") and
\begin{equation}
\label{812}
\sum_{k=1}^{m+1} \Delta_{m,k}^p = O(m^{-p/2}), \quad p \geq 3.
\end{equation}
Due to  relation (\ref{213}) the terms for $p=2$ in (\ref{456}) cancel.

%---------------------------------------------------------------

By the definition of $P_r$ and $\widetilde P_r$ (see (\ref{134}) and (\ref{polynomial})) it follows that 
\begin{equation}
\label{856}
\sum_{r=1}^\infty \left[P_r(\tau_.\kappa_.) \right]_l = \sum_{r=1}^l \widetilde P_r(\tau_. \kappa_.),
\end{equation}
where, according to the definitions, on the left-hand side $\tau_. = (\tau_3, \dots, \tau_{r+2})$ 
and on the right-hand side $\tau_. = (\tau_3, \dots, \tau_r)$,
and $[\ \ ]_l$ denotes the sum of all monomials $\tau_3^{p_3}\dots \tau_{r+2}^{p_{r+2}}$ in $P_r(\tau_. \kappa_.)$ such that $3 p_3 +  \dots +(r+2) p_{r+2} \leq l$, $l \geq3$.

Applying (\ref{234}) and (\ref{856}) we turn to $P_r$ in (\ref{exp}) and get
\begin{eqnarray}
\label{lskd}
\lefteqn{
m^{-r_0/2} P_{r_0}(\kappa_.)\left[ \exp \left(\sum_{p=3}^\infty \left(\sum_{k=1}^{m+1} \Delta_{m,k}^p\right)p!^{-1} \kappa_p\right) - 1\right]_{s-r_0}h_\infty }\\ 
&=& m^{-r_0/2} P_{r_0}(\kappa_.)  \sum_{r=3}^{s - r_0-1} \widetilde P_r\left (\left( \sum_{k=1}^{m+1} \Delta_{m,k}^.\right)\kappa_.\right) h_\infty \q\q\q\q\q \nonumber \\
&=&\ m^{-r_0/2} P_{r_0}(\kappa_.) \sum_{r=1}^{\infty} \left[ P_r \left( \sum_{k=1}^{m+1} \Delta_{m,k}^. \kappa_.\right)\right]_{s - r_0 - 1}h_\infty. \nonumber
\end{eqnarray}
Finally, (\ref{812}) together with condition (\ref{der2}) shows that
\begin{eqnarray}
\label{876}
\lefteqn{
m^{-r_0/2}P_{r_0} (\kappa_.)P_r \left( \sum_{k=1}^{m+1} 
\Delta_{m,k}^. \kappa_.\right)h_\infty }\\
& = & m^{-r_0/2}P_{r_0} (\kappa_.) \left[ P_r \left( \sum_{k=1}^{m+1} 
\Delta_{m,k}^. \kappa_.  \right)\right]_{s - r_0 - 1} h_\infty + R_{5,s}(m), \nonumber
\end{eqnarray}
where 
\begin{equation}
\label{rem}
|R_{5,s}(m)|\leq Bm^{- s/2}\quad \mbox{for every } m \geq n.
\end{equation}
Note that by  definition (\ref{ppp}), the partial derivatives $D^{(\alpha_1, \dots, \alpha_p)}$ of $h_\infty$ 
on the  right-hand side of (\ref{876}) are such that $\alpha_j \geq 2,\ j =1, \dots, p, \ p \leq k$, and  
$\sum_{j =1}^p (\alpha_j -2) \leq s - 3$.
Relations (\ref{812}), (\ref{lskd}) and (\ref{876}) show that (\ref{exp}) is equal to
\begin{equation}
\label{wry}
- \sum_{r_0 = 0}^{s - 4} m^{-r_0/2} P_{r_0}(\kappa_.)
\sum_{r = 1}^{s - r_0 - 3}P_r \left( \sum_{k=1}^{m+1} 
\Delta_{m,k}^. \kappa_.\right)h_\infty  + R_{6,s}(m),
\end{equation}
where $R_{6,s}(m)$ satisfies (\ref{rem}). Changing the order of summation and applying the  relation
$$
m^{-r_0/2} P_{r_0}(\kappa_.) = P_{r_0} \left (\sum_{j =1}^m \ve^._{m,j} \kappa_. \right),
$$
we obtain that (\ref{wry}) is equal to
\begin{eqnarray*}
\lefteqn{
- \sum_{l = 1}^{s - 3}\sum_{\genfrac {}{}{0pt}{}{r_0+ r = l}{ r \geq 1}}\left[P_{r_0} \left (\sum_{j =1}^m \ve^._{m,j} \kappa_. \right) P_r \left( \sum_{k=1}^{m+1} 
\Delta_{m,k}^. \kappa_.\right) \right] h_\infty + R_{6,s}(m) }\q\q\q \nonumber\\
&=& - \sum_{l = 0}^{s - 3}\sum_{r_0+ r = l }\left[P_{r_0} \left (\sum_{j =1}^m \ve^._{m,j} \kappa_. \right) P_r \left( \sum_{k=1}^{m+1} 
\Delta_{m,k}^. \kappa_.\right) \right] h_\infty   \nonumber\\
& - & \sum_{r_0 = 0}^{s - 3} P_{r_0}\left (\sum_{j =1}^m \ve^._{m,j} \kappa_. \right)
h_\infty + R_{6,s}(m). 
\end{eqnarray*}
By the multiplication theorem for exponential functions
\begin{eqnarray*}
\sum_{r+q=k} P_r(\tau_. \kappa_.) P_q(\tau'_. \kappa_.) = P_k ((\tau_. + \tau'_.)\kappa_.), \quad q,r,k \geq 0,
\end{eqnarray*}
we obtain
\begin{eqnarray*}
\lefteqn{
h_{m+1}(\unve_m) - h_{m+1}(\unve_{m+1}) } \nonumber\\
& = & - \sum_{l = 0}^{s - 3}\left[ P_l \left (\sum_{j =1}^m \ve^._{m,j} \kappa_. + 
\sum_{j = 1}^{m+1} \Delta^._{m,j} \kappa_.\right) 
- P_{l} \left ( \sum_{j=1}^m \ve^._{m,j} \kappa_.\right)\right] h_\infty  %\nonumber \\
 +  R_{6,s}(m) \nonumber\\
& = & - \sum_{l = 1}^{s - 3}\left[ P_l \left (\sum_{j =1}^{m +1}\ve^._{m+1,j} \kappa_. \right)
- P_{l} \left ( \sum_{j=1}^m \ve^._{m,j} \kappa_.\right)\right ]\! h_\infty \! + R_{6,s}(m). \nonumber \\
\end{eqnarray*}
This implies
\begin{eqnarray*}
h_{m}(\unve_{m}) - h_\infty  (0)
& = & \sum_{k =m}^\infty \left [h_k(\unve_k) - h_{k+1}(\unve_{k+1})\right]\\
& = & \sum_{k =m}^\infty \left[ \sum_{l =1} ^{s - 3}(k^{-l/2} - (k+1)^{-l/2}) P_l(\kappa_.) h_\infty 
+ R_{6,s}(k) \right] \\
& = & \sum_{l =1} ^{s - 3} m^{-l/2}P_l(\kappa_.) h_\infty  + R_{7,s} (m),
\end{eqnarray*}
with $|R_{7,s} (m)| \leq c(s) B m ^{- (s - 2)/ 2}$, where $c(s) > 0$ is  a constant depending on $s$. 
This proves (\ref{lkasdjf}) for $ l = s$ and $|\alpha| = 0$. The case $|\alpha|  > 0$ can be proved similarly. 
Hence, the induction is completed and the theorem is proved.
\end{proof}

\bibliographystyle{acm}

\bibliography{bibliography}

\  \\ \ 
{\small \sc Faculty of Mathematics, Univ. Bielefeld, P.O.Box 100131, 33501 Bielefeld, Germany}\\
{\it E-mail address:} {\sf goetze@math.uni-bielefeld.de, areshete@math.uni-bielefeld.de}

\end{document}